\newtheorem{thm}{Theorem}[section]
\newtheorem{prop}[thm]{Proposition}
\newtheorem{lem}[thm]{Lemma}
\newtheorem{cor}[thm]{Corollary}
\newtheorem*{claim}{Claim}
\theoremstyle{definition}
\newtheorem{defn}[thm]{Definition}
\newtheorem{rem}[thm]{Remark}
\newtheorem{conv}[thm]{Convention}
\newtheorem{exmp}[thm]{Example}
\renewcommand{\bar}[1]{\overline{#1}}
\renewcommand{\emptyset}{\varnothing}
\renewcommand{\setminus}{-}
\newcommand{\field}[1]{\mathbb{#1}}
\newcommand{\Z}{\field{Z}}
\newcommand{\R}{\field{R}}
\newcommand{\PP}{\field{P}}
\DeclareMathOperator{\Isom}{Isom}
\DeclareMathOperator{\Lk}{Lk}
\DeclareMathOperator{\diam}{diam}
\DeclareMathOperator{\Cay}{Cay}
\DeclareMathOperator{\ver}{ver}
\DeclareMathOperator{\hor}{hor}
\newcommand{\showcomments}{yes}
\newsavebox{\commentbox}
\title{Property (QT) for 3-manifold groups}
\author{Suzhen Han}
\address{Beijing International Center for Mathematical Research\\
Peking University\\
 Beijing 100871, China
P.R.}
\email{suzhenhan@pku.edu.cn}
\author{Hoang Thanh Nguyen}
\address{Department of Mathematics\\
The University of Danang - University of Science and
Education\\
 459 Ton Duc Thang, Da Nang, Vietnam \& International Centre for Research and Postgraduate Training in Mathematics (ICRTM), Institute of Mathematics, VAST, 18 Hoang Quoc Viet Road,
Cau Giay District, Hanoi, Vietnam}
\email{nthoang.math@gmail.com}
\author{Wenyuan Yang}
\address{Beijing International Center for Mathematical Research\\
Peking University\\
 Beijing 100871, China
P.R.}
\email{wyang@math.pku.edu.cn}
\date{\today}
\begin{document}

\begin{abstract}
According to Bestvina-Bromberg-Fujiwara, a finitely generated group is said to have property (QT) if it acts isometrically on a finite product of quasi-trees so that orbital maps are quasi-isometric embeddings. We prove that the fundamental group $\pi_1(M)$ of a compact, connected, orientable 3-manifold $M$ has property (QT) if and only if no summand in  the sphere-disc decomposition of $M$ supports either Sol or Nil geometry. In particular, all compact, orientable, irreducible 3-manifold groups with nontrivial torus decomposition and not supporting Sol geometry have  property (QT).
In the course of our study, we establish property (QT) for the class of Croke-Kleiner admissible groups and of  relatively hyperbolic groups under natural assumptions
has property (QT).
\end{abstract}
\maketitle


\section{Introduction}

\subsection{Background and Motivation}
The study of group actions on quasi-trees has   recently received a great deal of interest. A \emph{quasi-tree} means here a possibly locally infinite {connected} graph that is quasi-isometric to a simplicial tree. Groups acting on (simplicial) trees have been well-understood thanks to the Bass-Serre theory. On the one hand, quasi-trees have the obvious advantage of being more flexible; hence,  many groups can act unboundedly on quasi-trees but act on any trees with global fixed points. Many hyperbolic groups with Kazhdan's property (T), mapping class groups among many other examples belong to this category (see \cite{M05, M06} for other examples). In effect, these are sample applications of a powerful axiomatic construction of quasi-trees proposed in the work of Bestvina, Bromberg and Fujiwara \cite{BBF}. This construction will be fundamental in this paper.


We say that a finitely generated group G has \emph{property (QT)} if it acts isometrically
on a finite product $X=T_1\times T_2\times \cdots \times T_n$  of quasi-trees {with $L^2$-metric} so that for any basepoint $o\in X$, the induced orbit map
$$
g\in G\longmapsto go\in X
$$
is a quasi-isometric embedding of $G$ equipped with some (or any) word metric $d_G$ to $X$. 
Informally speaking, property (QT) gives an undistorted picture of the group under consideration in a   reasonably good space. Here, the direct product structure usually comes from the independence of several negatively curved layers endowed on the  group. Such a hierarchy structure has emerged from the study of mapping class groups since Masur-Minsky \cite{MM00}. In addition, property (QT)  is a commensurability invariant in \cite{BBF2, Button}  and could be thought of as a stronger property than the finiteness of asymptotic dimension.

Extending their earlier results of \cite{BBF}, Bestvina, Bromberg and Fujiwara \cite{BBF2} recently showed that residually finite hyperbolic groups and mapping class groups have property (QT).  It is known that Coxeter groups have property (QT) (see \cite{DJ99}), and thus every right-angled Artin group has property (QT) (see  \cite[Induction~2.2]{BBF2}).


In 3-manifold theory, the study of the fundamental groups of 3-manifolds is one
of the most central topics. Determining property (QT) of  finitely generated 3-manifold groups is the main task of the present paper.

\subsection{Property (QT) of 3-manifold groups}
Let   $M$ be a 3-manifold with finitely generated   fundamental group. Since property (QT) is a commensurability invariant, we can assume that $M$ is compact and orientable by considering the Scott core of $M$ and a double cover of $M$ (if $M$ is non-orientable).

In recent years, the theory of special cube complexes \cite{HW08} has  led to a deep understanding of 3-manifold groups \cite{Wise20} \cite{Agol}.  By definition,  the fundamental group of a compact special cube complex is undistorted in a right-angled Artin group, and then has property (QT) by \cite{DJ99}. However, 3--manifolds without non-positively curved Riemannian metrics cannot be cubulated by \cite{PW18} and certain {{cubulated}} $3$--manifold groups are not virtually cocompact special (see \cite{HP15}, \cite{Tid18}). Thus it was left open to   determine the property (QT) for all 3-manifold groups. 

By  the  sphere-disc decomposition, a compact oriented 3-manifold $M$ is a connected sum of prime summands $M_i$ $(1\le i\le n)$ with incompressible boundary. It is an easy observation that if a group has property (QT) then every non-trivial element is undistorted (see Lemma \ref{QTUndistortedLem}), and hence if $M_i$ supports $Sol$ or $Nil$ from the eight Thurston geometries, then $\pi_1(M_i)$ fails to have property (QT).
Our first main result is the following   characterization of property (QT) for all 3-manifolds. 

\begin{thm}
\label{thm:QT3MFD}
Let $M$ be a   connected,  compact, orientable 3-manifold. Then $\pi_1(M)$ has property (QT) if and only if no summand in its sphere-disk decomposition supports either $Sol$ or $Nil$ geometry.
\end{thm}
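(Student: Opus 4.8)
\emph{Proof idea.} The plan is to prove the two implications separately, using the two main technical results of this paper (property (QT) for Croke-Kleiner admissible groups, and for residually finite relatively hyperbolic groups with tame peripherals) as black boxes.

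For the ``only if'' direction, suppose some prime summand $M_i$ carries $Sol$ or $Nil$ geometry. Then $M_i$ is closed and aspherical, so $\pi_1(M_i)$ is a solvable group of finite virtual cohomological dimension, and it is not virtually abelian (for $Sol$ it contains $\mathbb{Z}^2\rtimes_A\mathbb{Z}$ with $A$ an Anosov matrix, and for $Nil$ it is virtually the integral Heisenberg group). Since $\pi_1(M_i)$ is a free factor of $\pi_1(M)$, any metrically proper isometric action of $\pi_1(M)$ on a finite product of quasi-trees restricts to a metrically proper action of $\pi_1(M_i)$ on the same space (preimages of balls in word metric stay finite under restriction); by Proposition~\ref{SOLQTLem} this forces $\pi_1(M_i)$ to be virtually abelian, a contradiction. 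Hence property (QT) of $\pi_1(M)$ implies that no summand supports $Sol$ or $Nil$ geometry. Alternatively, $Sol$- and $Nil$-groups contain exponentially, resp.\ quadratically, distorted infinite cyclic subgroups, which cannot sit inside $\pi_1(M)$ by Lemma~\ref{QTUndistortedLem}.

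For the ``if'' direction, assume no summand supports $Sol$, $Nil$, or (by hypothesis) $\widetilde{SL(2, \mathbb{R})}$ geometry, and show that $\pi_1(M)$ has property (QT). Since $3$-manifold groups are residually finite and property (QT) is a commensurability invariant, I am free to pass to finite covers. First I would reduce to prime summands: by the sphere--disc decomposition $\pi_1(M)$ is a free product of the groups $\pi_1(M_i)$ with a free group, which in particular exhibits $\pi_1(M)$ as hyperbolic relative to $\{\pi_1(M_i)\}$; so once each $\pi_1(M_i)$ is known to have property (QT), the relatively hyperbolic criterion of this paper yields property (QT) for $\pi_1(M)$ (one could instead record a lemma that property (QT) is preserved under finite free products). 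Next I would run through the cases for an irreducible prime $M_i$. If $M_i$ is $S^1 \times S^2$, a solid torus, or has finite fundamental group, the claim is immediate. If $M_i$ is geometric, the geometries left after the exclusions are $\mathbb{E}^3$, $S^2\times\mathbb{R}$, $\mathbb{H}^2\times\mathbb{R}$, and $\mathbb{H}^3$: the first two give groups virtually $\mathbb{Z}^k$ with $k\le 3$, which act properly cocompactly on a product of $k$ lines; an $\mathbb{H}^2\times\mathbb{R}$-manifold is, up to finite cover, $\Sigma \times S^1$ with $\pi_1(\Sigma)$ free or a residually finite hyperbolic group, so property (QT) holds by \cite{BBF2} together with closure of property (QT) under finite direct products; a closed $\mathbb{H}^3$-manifold has residually finite hyperbolic fundamental group, covered by \cite{BBF2}; and a cusped $\mathbb{H}^3$-manifold has residually finite fundamental group hyperbolic relative to $\mathbb{Z}^2$'s, handled again by the relatively hyperbolic criterion.

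The substantive case is $M_i$ irreducible with non-trivial JSJ decomposition. If $M_i$ is a graph manifold then, not being a $Sol$- or $Nil$-manifold, it is known to admit a finite cover whose fundamental group is a Croke-Kleiner admissible group, so property (QT) follows from the admissible-groups theorem. If $M_i$ is mixed (at least one hyperbolic JSJ piece), I would invoke the structural fact that, after collapsing the maximal connected graph-manifold JSJ sub-pieces, $\pi_1(M_i)$ is hyperbolic relative to a family $\mathcal{P}$ whose members are either $\mathbb{Z}^2$'s carried by JSJ tori or fundamental groups of graph-manifold sub-pieces (the latter including single Seifert pieces over a hyperbolic base with boundary, which are virtually a product of a free group and $\mathbb{Z}$). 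Each member of $\mathcal{P}$ is residually finite and has property (QT) by the cases already treated (the larger sub-pieces by the admissible-groups theorem), and $\pi_1(M_i)$ is residually finite, so the relatively hyperbolic criterion applies once more. I expect this last step to be the main obstacle: the relatively hyperbolic criterion surely demands more than bare property (QT) of the peripheral subgroups, so the genuine work lies in producing the quasi-tree actions of the $\mathbb{Z}^2$'s and of the admissible graph-manifold pieces in whatever canonical or equivariant form the combination requires, i.e.\ in matching the output of the admissible-groups theorem to the hypotheses of the relatively hyperbolic criterion — and in checking that the three excluded geometries are exactly what keeps every summand and JSJ sub-piece inside an already-handled class (with the $\widetilde{SL(2,\mathbb{R})}$-exclusion being needed only to rule out closed Seifert summands over hyperbolic bases). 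The two substantial inputs are the admissible-groups theorem and the relatively hyperbolic criterion themselves; the argument above is the bookkeeping that assembles them.
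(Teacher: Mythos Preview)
Your overall architecture matches the paper's: reduce to prime summands via a free-product/relatively-hyperbolic step, dispatch the geometric pieces directly, and handle non-geometric pieces by combining the CKA theorem with the relatively hyperbolic criterion. The ``only if'' direction is fine.

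There is, however, a genuine gap in the graph-manifold step. You assert that a closed graph manifold (not $Sol$, not $Nil$) ``is known to admit a finite cover whose fundamental group is a Croke--Kleiner admissible group, so property (QT) follows from the admissible-groups theorem.'' The fundamental group is indeed admissible in the graph-of-groups sense, but Theorem~\ref{QTCKAThm} is stated for a CKA \emph{action}, i.e.\ a geometric action on a Hadamard space. By Leeb \cite{B95} there exist closed graph manifolds that support \emph{no} nonpositively curved Riemannian metric, and this persists in finite covers; for such $M_i$ there is no CKA action to feed into Theorem~\ref{QTCKAThm}. The paper does not bypass this: it isolates closed graph manifolds as Proposition~\ref{prop:closedmld} and reruns the entire machinery of Sections~\ref{ConeoffSpaceSec}--\ref{ProofQTCKASec} with the CAT(0) metric replaced by the Kapovich--Leeb product metrics on the Seifert pieces (bilipschitz, not isometric, across the JSJ tori), redefining special paths accordingly. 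So your reduction to Theorem~\ref{QTCKAThm} is not available in this case.

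Two further points. First, you correctly flag that the relatively hyperbolic criterion needs more than bare property (QT) of peripherals, but you do not supply what is needed. The paper uses Theorem~\ref{thm:sepaQT}, which requires that the ambient group induce the \emph{full profinite topology} on each peripheral; this is verified via explicit separability results (Burns' theorem for free factors in the free-product step, and Lemma~\ref{lem:separable}---which rests on Sun's almost-fiber-surface analysis---for the graph-manifold peripherals inside a mixed manifold). Without these ingredients the combination step does not go through. Second, after the sphere--disc decomposition a summand $M_i$ may be irreducible and $\partial$-irreducible with a boundary component of genus $\ge 2$; your case analysis silently assumes empty or toral boundary. The paper handles this separately (Proposition~\ref{prop:highergenus}) by embedding such $M_i$ as an undistorted subgroup of a manifold with toral boundary.
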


By  standard arguments, we are reduced to the case where $M$ is a compact, connected, orientable, irreducible 3-manifold with empty or tori boundary. Theorem~\ref{thm:QT3MFD} actually follows from the following theorem.
\begin{thm}
\label{thm:3-mfd}
Let $M$ be a compact orientable irreducible 3-manifold with
empty or tori boundary, with nontrivial torus decomposition and that does not support the Sol geometry. Then $\pi_1(M)$ has property (QT).
\end{thm}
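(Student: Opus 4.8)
The plan is to apply the two main theorems already established — property (QT) for Croke--Kleiner admissible groups, and property (QT) for a residually finite relatively hyperbolic group whose peripheral subgroups satisfy the relevant mild condition — according to the combinatorial type of the torus decomposition of $M$. Cutting $M$ along its JSJ tori produces pieces each of which is either Seifert fibered or atoroidal, the latter being, by geometrization, finite-volume hyperbolic with toral cusps. I split the argument into case (i), where every piece is Seifert fibered so that $M$ is a graph manifold, and case (ii), where at least one piece is hyperbolic.

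In case (i) I would first pass to a finite cover of $M$, which is harmless since property (QT) is a commensurability invariant and the cover is again a non-$Sol$ graph manifold with nontrivial torus decomposition; the cover is chosen so that every Seifert piece is a genuine circle bundle over an orientable surface with boundary (in particular twisted $I$-bundles over the Klein bottle get unwrapped), and all gluings are orientation compatible. Then the torus decomposition presents $\pi_1(M)$ as the fundamental group of a finite graph of groups whose vertex groups are the fundamental groups of the Seifert pieces — each with nonempty boundary, the decomposition being nontrivial — and whose edge groups are $\cong \Z^2$; and at every JSJ torus the circle fibers of the two adjacent Seifert structures span non-commensurable cyclic subgroups of the edge group, which is the ``non-matching fibers'' requirement of admissibility, and which holds here precisely because the torus is canonical while $M$ is not $Sol$ (equivalently not Seifert fibered). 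Thus $\pi_1(M)$ is Croke--Kleiner admissible and has property (QT).

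In case (ii) the structural input, which I would take from the literature on relatively hyperbolic 3-manifold groups, is that $\pi_1(M)$ is hyperbolic relative to the collection $\mathcal{P}$ consisting of (a) the fundamental groups of the maximal graph-manifold submanifolds of $M$ (maximal connected unions of adjacent Seifert pieces, allowing a single Seifert piece), and (b) the $\Z^2$ subgroups carried by the JSJ tori separating two hyperbolic pieces, together with the boundary tori of $M$ lying in hyperbolic pieces; the hyperbolic pieces are themselves not peripheral, their relative hyperbolicity being absorbed into the ambient structure. It remains to verify the hypotheses of the relatively hyperbolic (QT) theorem. First, $\pi_1(M)$ is residually finite, which is known for compact 3-manifold groups via geometrization. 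Second, each $P \in \mathcal{P}$ must satisfy the peripheral condition: for $P \cong \Z^2$ this is immediate, since $\Z^2$ acts properly with quasi-isometric orbital maps on $\R^2$, a product of two quasi-trees; for $P = \pi_1(W)$ with $W$ a maximal graph-manifold submanifold, either $W$ is a single Seifert piece, in which case $\pi_1(W)$ is virtually $F_n \times \Z$ and the condition is elementary, or $W$ has a nontrivial torus decomposition, in which case a finite-index subgroup of $\pi_1(W)$ is Croke--Kleiner admissible by case (i) and one feeds into the relatively hyperbolic theorem the explicit finite product of quasi-trees built in the admissible-groups theorem. Applying that theorem then yields property (QT) for $\pi_1(M)$.

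The step I expect to be the real obstacle is, in case (ii), checking that the graph-manifold peripheral subgroups meet the \emph{precise} hypothesis of the relatively hyperbolic (QT) theorem, which is stronger than bare property (QT): it requires a product-of-quasi-trees action carrying an appropriate equivariance and uniformity across conjugates, so the admissible-groups theorem cannot be invoked as a black box producing only the words ``property (QT)'' — one needs the concrete product of quasi-trees it constructs together with separability properties of 3-manifold subgroups in order to assemble the pieces equivariantly inside $\pi_1(M)$. A secondary, more bookkeeping-type point is in case (i): producing a finite cover in which every Seifert piece is an honest surface bundle and the non-matching-fibers condition holds verbatim at each JSJ torus, so that Croke--Kleiner admissibility applies on the nose.
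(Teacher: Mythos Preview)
Your overall strategy matches the paper's: split into graph manifold versus mixed, apply Theorem~\ref{QTCKAThm} in the first case and the relatively hyperbolic machinery in the second. In case (ii) you have essentially reproduced the paper's argument, and you have correctly put your finger on the delicate step: one cannot feed ``$P$ has property (QT)'' as a black box into Theorem~\ref{QTRHGThm}, because that theorem asks each peripheral $P$ itself (not a finite-index subgroup) to act factor-preservingly. The paper resolves this exactly along the lines you sketch, via separability: Lemma~\ref{lem:separable} shows each peripheral subgroup and each of its finite-index subgroups is separable in $\pi_1(M)$, so the ambient group induces the full profinite topology on $P$, and then Theorem~\ref{thm:sepaQT} applies.

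There is, however, a genuine gap in your case (i). You verify that $\pi_1(M)$ is an admissible \emph{group} in the sense of Definition~\ref{defn:admissible} and then invoke Theorem~\ref{QTCKAThm}. But Theorem~\ref{QTCKAThm} is about CKA \emph{actions}: it requires a geometric action of the admissible group on a Hadamard (complete proper CAT(0)) space $X$, and the entire proof (special paths, strips, projection system of fiber lines) is built out of the CAT(0) geometry of $X$. For graph manifolds with nonempty boundary this is fine, since Leeb \cite{B95} supplies a nonpositively curved metric and $\pi_1(M)\curvearrowright\widetilde M$ is genuinely CKA. But a \emph{closed} graph manifold may admit no nonpositively curved Riemannian metric whatsoever (again \cite{B95}), so no CKA action is available and Theorem~\ref{QTCKAThm} does not apply. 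Passing to finite covers does not help: the obstruction to nonpositive curvature persists. The paper treats this as a separate case (Proposition~\ref{prop:closedmld}, Section~\ref{SSClosedGraphMfd}): one equips $\widetilde M$ with the Kapovich--Leeb metric, which is only piecewise CAT(0) with bilipschitz (affine) gluings across JSJ planes, redefines special paths in this non-CAT(0) setting, and then reruns the proof of Theorem~\ref{QTCKAThm} with these modifications. Your outline misses this entirely; the phrase ``Croke--Kleiner admissibility applies on the nose'' is where the argument breaks.
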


A 3-manifold $M$ as in Theorem~\ref{thm:3-mfd} is called
a \emph{graph manifold} if all the pieces in its torus decomposition are Seifert fibered spaces; otherwise $M$ is called a
\emph{mixed manifold}. It is well-known that the fundamental group of a mixed 3-manifold is hyperbolic relative to a collection of abelian groups and graph manifolds groups.
To prove Theorem~\ref{thm:3-mfd}, we actually determine the property (QT) of \emph{Croke-Kleiner admissible groups}, and of relatively hyperbolic groups that will be discussed in detail in the following subsections. These results include but are much more general than the fundamental groups of graph manifolds and mixed manifolds.



\subsection{Property (QT) of Croke-Kleiner admissible groups.}



We first address property (QT) of graph manifolds. Our approach is based on a study of a particular class of graph of groups introduced by Croke and Kleiner \cite{CK02}    which they called {\it admissible groups}  and generalized the fundamental groups of  graph manifolds. We say that an admissible group $G$ is a {\it Croke-Kleiner admissible group} or a {\it CKA group} if it acts properly discontinuous, cocompactly and by isometries on a complete proper CAT(0) space $X$. Such action $G \curvearrowright X$ is called a {\it CKA action} and the space $X$ is called a {\it CKA space}.
The CKA action   is modeled on the JSJ structure of graph manifolds where the Seifert fibration is replaced by the following central extension of a general hyperbolic group $H_v$:
\begin{equation}\label{centralExtEQ}
1\to Z(G_v)\to G_v\to H_v\to 1
\end{equation}
where $Z(G_v)=\mathbb Z.$
It is worth pointing out that CKA groups   encompass a much more general class of groups and can be used to produce interesting groups by a ``flip" trick from any finite number of hyperbolic groups (see Example~\ref{exmp:CKA}).


The notion of  an \textit{omnipotent} group  was introduced by Wise in \cite{Wise00} and has found many applications in subgroup separability. We refer the reader to Definition \ref{defn:omnipotent} for its definition and note here that free groups \cite{Wise00}, surface groups \cite{B07}, and the more general class of virtually special hyperbolic groups \cite{Wise20} are omnipotent.
In \cite{NY20}, Nguyen-Yang proved  property (QT) for a special class of CKA actions under flip conditions (see Definition~\ref{defn:flip}). One of the main contributions of this paper is to remove this assumption and prove the following result in   full generality.
\begin{thm}
\label{QTCKAThm}
Let $G \curvearrowright X$ be a   CKA action where for every vertex group the central extension  (\ref{centralExtEQ}) has omnipotent hyperbolic quotient group. Then $G$ has property (QT).
\end{thm}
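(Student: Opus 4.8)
The plan is to exhibit a finite collection of quasi-trees on which $G$ acts isometrically, together with a Masur--Minsky style distance formula showing that the diagonal orbit map into their product is a quasi-isometric embedding. Recall the geometry of a CKA action: $X$ is assembled along the Bass--Serre tree $T$ of the admissible graph of groups, over each vertex $v$ there is a $G_v$-cocompact block $X_v$ which splits as a metric product $X_v\cong Y_v\times\R$ with $Z(G_v)=\Z$ acting by translation on the fibre $\R$ and $H_v$ acting geometrically on the Hadamard space $Y_v$ (so $H_v$ is hyperbolic), over each edge there is a flat strip $\cong\R^2$, and admissibility forces the two fibre directions meeting along a strip to be transverse. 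Blocks are quasiconvex in $X$ and their nearest-point projections satisfy Behrstock-type inequalities, so the family of $G$-translates of blocks already behaves like a Bestvina--Bromberg--Fujiwara (BBF) family; this combinatorial skeleton is what $T$ records, and $T$ will be the first quasi-tree factor.

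\emph{Horizontal quasi-trees.} Each $H_v$ is hyperbolic and, being omnipotent, residually finite, hence has property (QT) by \cite{BBF2}: it acts on a finite product $\prod_{j=1}^{m_v}Q^{(v)}_j$ of quasi-trees with quasi-isometrically embedded orbits. For each $G$-orbit $[v]$ of vertices and each $j$ I would build a quasi-tree $\mathcal{H}_{[v],j}$ by a BBF quasi-tree-of-spaces construction over the family of block-copies $\{gX_v:g\in G\}$: attach to each $gX_v$ the quasi-tree $g\cdot Q^{(v)}_j$, and define the projection between two distinct block-copies by composing the nearest-point projection in $X$ with the coarsely Lipschitz factor map $H_v\to Q^{(v)}_j$. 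The BBF axioms for this enriched family follow from the axioms for block-projections in $X$ together with coarse Lipschitzness of $H_v\to Q^{(v)}_j$; since $G$ permutes blocks with finitely many orbits and $H_v\curvearrowright\prod_j Q^{(v)}_j$ is equivariant, $G$ acts on the associated quasi-tree of (uniform) quasi-trees, which is itself a quasi-tree $\mathcal{H}_{[v],j}$.

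\emph{Vertical quasi-trees.} For each $[v]$ consider the family of fibre lines, one coarse $\R$-line $\ell_{gX_v}$ per block-copy of type $[v]$, with projections between distinct lines defined by composing the block nearest-point projection with nearest-point projection onto the target fibre. When blocks of type $[v]$ are joined through a chain of blocks, transversality of fibres in the intervening strips makes these projections bounded, which is the content of the BBF inequalities; one then obtains a quasi-tree of quasi-lines $\mathcal{V}_{[v]}$ on which $G$ acts. This is the step where the flip hypothesis of \cite{NY20} was previously used and where omnipotence of the $H_v$ now does the work: using commensurability invariance of (QT) one passes to a finite-index subgroup of $G$ whose CKA structure is \emph{clean}, i.e. the peripheral slopes of each $H_v$ (the images in $H_v$ of the incident edge groups $\Z^2$, each a maximal cyclic subgroup) can be made to have compatible, controlled orders in matching finite covers, so that the fibre lines of distinct blocks are uniformly non-parallel and the BBF projection constants are uniform across the tree; without this input one cannot guarantee that the global family of fibre lines satisfies the axioms at all.

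\emph{Assembly and conclusion.} Put $\mathcal{Q}=T\times\prod_{[v]}\prod_{j=1}^{m_v}\mathcal{H}_{[v],j}\times\prod_{[v]}\mathcal{V}_{[v]}$, a finite product of quasi-trees with an isometric $G$-action; with the $\ell^1$ product metric and basepoint $o$, the orbit map $g\mapsto g\cdot o$ is coarsely Lipschitz since every factor map is. For the reverse inequality one invokes the distance formula for CKA actions: decomposing a geodesic from $o$ to $go$ into its passages through successive blocks (recorded by $T$), splitting the displacement in each block into its horizontal part (detected by $\prod_j Q^{(v)}_j$ since $H_v$ has (QT)) and its vertical part (detected by the fibre), and using that distance in a BBF quasi-tree-of-spaces is comparable to the sum of projection distances plus the number of active objects, one gets
\[
d_{G}(1,g)\ \asymp\ d_{T}(o,go)\ +\ \sum_{[v]}\Bigl(\sum_{j=1}^{m_v}d_{\mathcal{H}_{[v],j}}(o,go)\ +\ d_{\mathcal{V}_{[v]}}(o,go)\Bigr),
\]
so the orbit map into $\mathcal{Q}$ is a quasi-isometric embedding and $G$ has property (QT). The main obstacle is the vertical construction: verifying that the family of fibre lines across all blocks satisfies the BBF projection axioms with uniform constants, which is exactly where the flip hypothesis was essential before and where omnipotence of the hyperbolic quotients must be leveraged, via controlled finite covers, to ``untwist'' the central $\Z$-directions into a globally coherent system of quasi-lines; a secondary difficulty is arranging the horizontal quasi-trees $\mathcal{H}_{[v],j}$ to be genuinely $G$-equivariant and of uniformly bounded complexity over the finitely many vertex orbits.
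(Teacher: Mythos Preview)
Your high-level architecture---Bass--Serre tree factor, horizontal quasi-trees capturing the $H_v$-geometry, vertical quasi-trees capturing fibre displacement, and a distance formula assembling them---matches the paper's. But the two hardest steps are not actually carried out, and in both places your description diverges from what the paper does in ways that matter.

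\textbf{The vertical system.} You propose plain fibre lines $\ell_{gX_v}\cong\R$ with projections obtained by ``composing the block nearest-point projection with nearest-point projection onto the target fibre.'' This does not obviously yield bounded-diameter projections: if $v,v'$ are at distance~2 in $T$ through $w$, the nearest-point projection of the block at $v'$ onto the block at $v$ lands on an entire boundary plane $F_e\cong\R^2$, and the further projection onto the fibre $\R$ of $Y_v$ is a priori unbounded. The paper's key technical contribution (Section~\ref{FiberLineSystemSec}) is to replace the naive fibre line by a \emph{thickened fibre line} $\mathfrak{fl}(v)$: one takes $St(v)\times\R$, glues on a copy of each adjacent boundary plane $F_e$, and then cones off in each $F_e$ a cocompact family of lines parallel to the \emph{other} vertex's fibre direction. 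This coning-off is precisely what collapses the unbounded projection to a bounded set (Lemma~\ref{DiamProjLem}), and the projection $\pi_{\mathfrak{fl}(v)}(\mathfrak{fl}(v'))$ is defined via the strip in the \emph{intermediate} piece $Y_w$, not via nearest-point projection in $X$. The verification of the BBF axioms (Lemma~\ref{ProjSystemFibersLem1}) then reduces to the projection axioms for boundary lines in the hyperbolic space $\bar Y_w$. Crucially, this construction requires the bipartite partition $T^0=\mathcal V_1\sqcup\mathcal V_2$ (so that ``same-colour'' vertices are always separated by an intermediate vertex of the other colour), which your outline omits entirely.

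\textbf{The role of omnipotence.} You say omnipotence is used to make fibre lines ``uniformly non-parallel'' so that BBF constants are uniform. But transversality of adjacent fibre directions is already guaranteed by axiom~(4) of admissibility, and uniformity follows from finiteness of edge orbits---no omnipotence needed there. In the paper, omnipotence enters only through Lemma~\ref{FindexSubgrpsLem}: it is used to synchronise the orders of the peripheral loxodromics $b_e\in H_v$ across all vertices simultaneously, producing a \emph{compatible} system of finite-index subgroups $G_v'=\ddot K_v\times Z_v$ and $G_e'$. This is what allows one to (i) assume each $G_v$ is a genuine direct product (a standing hypothesis for the fibre-line construction in Section~\ref{FiberLineSystemSec}), and (ii) partition the horizontal quasi-line collection $\mathbb A_i$ into finitely many $\ddot G$-orbits each satisfying the projection axioms. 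Your proposal misplaces this ingredient.

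\textbf{The horizontal system.} Rather than importing the abstract (QT) quasi-trees $Q_j^{(v)}$ of each $H_v$ and attempting to glue them equivariantly (where, as you note, $G$-equivariance is unclear since only $H_v$ and not $G$ acts on $Q_j^{(v)}$), the paper builds two coned-off hyperbolic spaces $\dot{\mathcal X}_1,\dot{\mathcal X}_2$ by attaching hyperbolic cones along boundary lines and identifying cone points across the bipartite tree (Section~\ref{ConeoffSpaceSec}). The horizontal distance is then captured by an explicit $\dot G$-finite family of quasi-lines $\mathbb A_i\subset\dot{\mathcal X}_i$, and the passage to genuine quasi-trees goes through the finite-index subgroup produced by Lemma~\ref{FindexSubgrpsLem}. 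Moreover---and this is subtle---the fibre-line quasi-trees $\mathcal C_K(\mathbb F_i)$ do not capture only vertical distance: Lemmas~\ref{HDistLem} and~\ref{WDistLem} show they also absorb the boundary-line projections that were coned off in $\dot{\mathcal X}_i$, so the horizontal and vertical systems are interlocked rather than independent (see Remark~\ref{PartDistRem} and the proof of Proposition~\ref{hDistFormulaProp}). Your clean separation into horizontal and vertical factors would leave these boundary-line contributions unaccounted for.
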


\begin{rem}
It is a long-standing problem whether every hyperbolic group is residually finite. Wise noted that if every hyperbolic group is residually finite, then any hyperbolic
group is omnipotent (see Remark~3.4 in \cite{Wise00}).
\end{rem}

Let us comment on the relation of  this work with the previous  \cite{NY20}. As  in \cite{NY20},   the ultimate goal is to utilize Bestvina-Bromberg-Fujiwara's projection complex machinery to obtain   actions on quasi-trees. The common starting point is the class of special paths  developed in \cite{NY20} that record the distances of $X$. However, the flip assumption (see Definition~\ref{defn:flip}) on CKA actions was crucially used  there: the  fiber lines coincide with boundary lines in adjacent vertex pieces when crossing the boundary plane, roughly speaking.
 Hence, a straightforward gluing construction works in that case but fails in our general setting. In this paper, we use a completely different projection system to achieve the same purpose with a more delicate analysis.  
 
 It is worth mentioning the following fact  frequently invoked by many authors: any graph manifolds are quasi-isometric to some flip ones (see \cite{KL98}). This simplification, however, is useless to address property (QT), as such a quasi-isometry   does not respect the group actions.  Conversely, our direct treatment of any graph manifolds (closed or with nonempty boundary) is new, and we believe it will potentially allow for further applications.


We now explain  how we apply Theorem~\ref{QTCKAThm} to graph manifolds. If $M$ is a graph manifold with nonempty boundary then it always admit a Riemannian metric of nonpositive curvature (see \cite{B95}). In particular, $\pi(M) \curvearrowright \tilde{M}$ is a CKA action, and thus property (QT) of $\pi_1(M)$ follows immediately from Theorem~\ref{QTCKAThm}. However, closed graph manifolds may not support any Riemannian metric of nonpositive curvature (see \cite{B95}), so property (QT) in this case does not follow immediately from Theorem \ref{QTCKAThm}. We have to make certain modifications on some steps to run the proof of Theorem~\ref{QTCKAThm} for closed graph manifolds (see Section \ref{sec:qtgraphmld} for details).




\subsection{Property (QT) of relatively hyperbolic groups.}
When $M$ is a mixed 3–manifold, then $\pi_1(M)$ is  hyperbolic relative to the finite collection $\mathcal P$ of fundamental groups of maximal graph manifold
components, isolated Seifert components, and isolated JSJ tori (see \cite{Bigdely-Wise13}, \cite{Dah}). Therefore, we need to study property (QT) for relatively hyperbolic groups.

Our main result in this direction is a characterization of property (QT) for residually finite relatively hyperbolic groups,  which generalizes the corresponding results of \cite{BBF2} on  Gromov-hyperbolic groups.

\begin{thm}\label{QTRHGThm}
Suppose that a finitely generated group $H$ is hyperbolic relative to a finite set of subgroups $\mathbb P$. Assume that each $P\in\mathbb P$  acts by isometry on finitely many quasi-trees $T_i$ $(1\le i\le n_P)$ so that the induced diagonal action on $\prod_{i=1}^{n_P} T_i$ has property (QT). If $H$ is residually finite, then $H$ has property (QT).
\end{thm}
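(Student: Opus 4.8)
The plan is to combine three ingredients: (1) a cusped/combinatorial model for the relatively hyperbolic group $H$ whose peripheral cosets are organized by a hyperbolic "coned-off" space; (2) the Bestvina–Bromberg–Fujiwara projection complex machinery applied to the collection of peripheral cosets; and (3) the hypothesis that residual finiteness gives enough finite quotients to realize the projection complex data on a finite product of quasi-trees, exactly as in the Gromov-hyperbolic case of \cite{BBF2}.

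First I would fix a finite generating set of $H$ and recall that $H$ hyperbolic relative to $\mathcal P$ means the coned-off Cayley graph $\hat{\mathrm{Cayley}}(H)$ is hyperbolic and the peripheral cosets satisfy the bounded coset penetration property. The set $\mathbb{Y}$ of all left cosets $\{gP : g\in H, P\in\mathcal P\}$ with the (coarse) closest-point projections among them should be shown to satisfy the projection axioms of \cite{BBF} with a uniform constant $\theta$; this is where relative hyperbolicity does the work, via Sisto-type results on the geometry of peripheral cosets inside a relatively hyperbolic group (finite "angle"/projection between distinct cosets, and the fact that a geodesic passes close to at most finitely many deep parts). The resulting quasi-tree of metric spaces $\mathcal C(\mathbb{Y})$, coned over the cosets, is quasi-isometric to $\hat{\mathrm{Cayley}}(H)$, hence $H$ acting on it moves a basepoint quasi-isometrically modulo the distances inside the cosets. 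To handle the coloring, I would invoke the standard trick: using residual finiteness of $H$ pass to a finite-index subgroup separating a neighborhood of the identity from the peripheral cosets, so that the cosets in $\mathbb{Y}$ can be partitioned into finitely many $H$-invariant-up-to-that-subgroup classes $\mathbb{Y}_1,\dots,\mathbb{Y}_N$, each with pairwise infinite projection distance (the "finite coloring" lemma of \cite{BBF2}); then $\prod_j \mathcal C(\mathbb{Y}_j)$ is a finite product of quasi-trees on which $H$ (or a finite-index subgroup, which suffices since (QT) is a commensurability invariant) acts, with orbital maps recovering $\hat{d}$.

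It remains to upgrade "recovers the coned-off metric" to "recovers the word metric of $H$", and this is exactly where the hypothesis on $\mathcal P$ enters. The word metric $d_H$ is, coarsely, $\hat d$ plus the sum of the internal distances $d_{gP}$ over the finitely many cosets a geodesic deeply penetrates (a distance formula in the spirit of Masur–Minsky/Sisto). Each $P\in\mathcal P$ by assumption acts on a finite product $\prod_{i=1}^{n_P} T_i^P$ with (QT), i.e. a quasi-isometric embedding of $P$ into a product of quasi-trees; I would spread this over all cosets $gP$ simultaneously, using the $H$-action to permute the cosets within one $\mathcal P$-class and choosing coset representatives, to build, for each $i$, a quasi-tree (again via a projection-complex-style "tree of the $T_i^{gP}$'s" construction over the index set $\mathbb{Y}$) on which $H$ acts and whose orbital map sees $\sum_{gP} d_{T_i^{gP}}$. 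Taking the product of these finitely many quasi-trees with the $\mathcal C(\mathbb{Y}_j)$'s and applying the distance formula yields that the diagonal orbital map $H \to \mathcal C(\mathbb{Y}_1)\times\cdots\times\mathcal C(\mathbb{Y}_N)\times(\text{peripheral quasi-trees})$ is a quasi-isometric embedding, i.e. $H$ has property (QT).

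The main obstacle I anticipate is the second half: globally assembling the peripheral (QT)-data into $H$-equivariant quasi-trees. One must check the projection axioms for the enlarged system indexed by $\mathbb{Y}$ with "local spaces" $T_i^{gP}$ — in particular that the projection of one coset's quasi-tree onto another is uniformly bounded, which should follow from bounded coset penetration — and one must ensure equivariance despite the peripheral actions a priori being unrelated across different cosets in the same $H$-orbit (this is fixed by transporting a single $P$-action along a choice of coset representatives and absorbing the ambiguity into the quasi-tree construction's built-in slack). The distance formula for relatively hyperbolic groups and the finite-coloring argument are, by contrast, essentially available from \cite{BBF2} and the relative-hyperbolicity literature, so the novelty and the technical heart are in making the peripheral layers act equivariantly and combine cleanly with the coned-off layer.
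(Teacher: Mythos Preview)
Your treatment of the peripheral layer is essentially the paper's: for each $P\in\mathcal P$ and each $i\le n_P$, one indexes copies of $T_i$ by the left cosets $fP$, transports the $P$-action along a fixed set of coset representatives to get an $H$-action on the collection $\{fT_i\}$, pushes the coset-projection maps $\pi_{fP}$ through the orbital embeddings $\iota^i_P$ to obtain projection maps on the $fT_i$'s, and builds $\mathcal C_K(\{fT_i\})$. No coloring is required here: the projection axioms for $\mathbb P=\{fP\}$ transfer directly. This part is fine.

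The gap is in your handling of the thick (coned-off) distance. Coloring the peripheral cosets $\mathbb Y$ and forming $\prod_j\mathcal C(\mathbb Y_j)$ does \emph{not} recover $\hat d$. If the cosets carry their word metric, $\mathcal C(\mathbb Y_j)$ is not a quasi-tree; if they are coned to points, the BBF distance formula gives back only a count of large coset-projections, which is the \emph{peripheral} term $\sum_{P\in\mathbb P}[d_P(x,y)]_K$ in Lemma~\ref{RHGDistFLem}, not the thick term $d^K_{\hat H}(x,y)$. The coned-off Cayley graph is a genuine hyperbolic space (generally not a quasi-tree), and embedding it QI into a finite product of quasi-trees requires the full BBF2 mechanism applied \emph{inside} $\hat H$: one takes an $H$-finite family $\mathbb A$ of quasi-axes of loxodromic elements for the acylindrical action $H\curvearrowright\hat H$, proves a thick-distance formula $d^K_{\hat H}(x,y)\sim\sum_{\ell\in\mathbb A}[\hat d_\ell(x,y)]_K$ (Lemma~\ref{ConeofYDistFormulaLem}), and then uses residual finiteness to separate the maximal elementary stabilizers $E(\ell)$ so that $\mathbb A$ splits into finitely many $\dot H$-orbits each satisfying the projection axioms (Corollary~\ref{QIEThickDistCor}). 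This is where residual finiteness is actually spent in the paper; your proposal spends it on the cosets, where it is not needed, and leaves the thick part uncovered.
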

\begin{rem}
Since maximal parabolic subgroups are undistorted, each $P\in\mathcal P$ obviously has property (QT) if $G$ has property (QT). A non-equivariant version of this result was proven by Mackay-Sisto \cite{MS13}.
\end{rem}


\begin{rem}
It is well-known that mixed 3-manifold groups $G=\pi_1(M)$ are hyperbolic relative to a collection $\mathbb P$ of abelian groups and graph manifold groups $P=\pi_1(M_i)$. However, it is still insufficient to derive directly via Theorem \ref{QTRHGThm} the property (QT) of $G$  from that of  graph manifold groups    $P$ asserted in Theorem \ref{QTCKAThm}, since  $P$  may not preserve factors in the finite product of quasi-trees. Of course, passing to an appropriate finite index subgroup $P'<P$ preserves the factors, but it is not clear at all whether $P'$ are peripheral subgroups of a finite index subgroup $G'$ of $G$. 
In order to find such a $G'$,  a stronger assumption must be satisfied so that every finite index subgroup of each $P$ is separable in $G$. This requires the notion of a \textit{full profinite topology} induced on subgroups (see the precise definition before Theorem \ref{thm:sepaQT} and a relevant discussion in \cite{R18}).  See Theorem \ref{thm:sepaQT}     for the precise statement.
In the setting of a mixed 3-manifold,      Lemma \ref{lem:separable} verifies that each peripheral subgroup $P\in\mathbb P$ of $\pi_1(M)$  satisfies  this assumption. Therefore, all mixed 3-manifolds are proven to have property (QT).
\end{rem}

We now explain a few algebraic and geometric consequences for groups with property (QT).

Similar to trees, any isometry on quasi-trees must be either elliptic or loxodromic (\cite{M05}).  Hence, if a finitely generated group acts properly (in a metric sense) on finite products of quasi-trees, then every non-trivial element is undistorted (Lemma \ref{QTUndistortedLem}).  Moreover, property (QT) allows  to characterize virtually abelian groups among sub-exponential growth groups and solvable groups.

\begin{thm}
\label{thm:subexponential}
Let $G$ be a finitely generated group. Then the following statements hold.
\begin{enumerate}
    \item Assume that $G$ has sub-exponential growth. Then $G$ has property (QT) if and only if $G$ is virtually abelian.
    \item Suppose that $G$ is solvable with finite virtual cohomological dimension. Then $G$ has property (QT) if and only if it is virtually abelian.
\end{enumerate}
\end{thm}

By Theorem \ref{QTRHGThm}, this yields as a consequence that non-uniform lattices in $SU(n,1)$ and $Sp(n, 1)$ for $n\ge 2$ fail to act properly on  finite products of quasi-trees.  
\begin{cor}
A non-uniform lattice in $SU(n,1)$ for $n\ge 2$ or $Sp(n,1)$ for $n\ge 1$ does not have property (QT), while any lattice of $SO(n,1)$ has property (QT) for $n\ge 2$.
\end{cor}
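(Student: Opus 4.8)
The plan is to assemble Proposition~\ref{SOLQTLem}, Theorem~\ref{QTRHGThm} and the Bestvina-Bromberg-Fujiwara theorem on residually finite hyperbolic groups \cite{BBF2} with the standard structure theory of cusps of rank-one lattices; the only real content is a dichotomy in the maximal parabolic (cusp) subgroups.

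For the negative half, let $\Gamma$ be a non-uniform lattice in $G=SU(n,1)$ with $n\ge 2$, or in $G=Sp(n,1)$ with $n\ge 1$. I would first fix a cusp subgroup $P\le\Gamma$, namely the stabilizer of a parabolic fixed point. Then $P$ is (virtually) a lattice in the complex (respectively quaternionic) Heisenberg-type nilpotent group attached to $G$, so $P$ is finitely generated, virtually $2$-step nilpotent, of finite virtual cohomological dimension, and --- this is the crucial point, see \cite[II.~Lemma~10.27]{BH99} --- \emph{not} virtually abelian. Now argue by contradiction: if $\Gamma$ had property (QT) it would act isometrically on a finite product of quasi-trees with orbital maps quasi-isometric embeddings; such an action is metrically proper, and so is its restriction to $P$. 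Thus $P$ would be a solvable group of finite virtual cohomological dimension acting properly on a finite product of quasi-trees, contradicting Proposition~\ref{SOLQTLem}. Hence no such $\Gamma$ has property (QT).

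For the positive half, let $\Gamma$ be any lattice in $SO(n,1)$ with $n\ge 2$. Since $SO(n,1)\le\GL_{n+1}(\R)$, the group $\Gamma$ is linear, hence residually finite by Mal'cev's theorem; this is the residual finiteness input for Theorem~\ref{QTRHGThm}. Using Selberg's lemma together with the commensurability invariance of property (QT) \cite{BBF2, Button}, I would reduce to the case $\Gamma=\pi_1(M)$ for a complete finite-volume hyperbolic $n$-manifold $M$, and split into two cases. If $M$ is closed, then $\Gamma$ acts geometrically on $\HH^n$, so it is a residually finite word-hyperbolic group and has property (QT) by \cite{BBF2}. If $M$ is non-compact, then $M$ has finitely many cusps and $\Gamma$ is hyperbolic relative to the finite collection $\mathcal P$ of cusp subgroups, each of which is the fundamental group of a closed flat $(n-1)$-manifold and hence virtually $\Z^{n-1}$ by Bieberbach's theorem. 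Since $\Z^m$ acts properly and cocompactly on $\R^m=\prod_{i=1}^m\R$, a product of $m$ lines (each a quasi-tree), with orbital map the quasi-isometric embedding $\Z^m\hookrightarrow\R^m$, the group $\Z^m$ has property (QT); by commensurability invariance so does each $P\in\mathcal P$, which is precisely the peripheral hypothesis of Theorem~\ref{QTRHGThm}. Applying Theorem~\ref{QTRHGThm} to $\Gamma$ with peripheral structure $\mathcal P$ then gives property (QT) for $\Gamma$.

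I do not expect a serious obstacle here: the argument is essentially a bookkeeping of the hypotheses of the results quoted above. The step that genuinely carries it is the cusp dichotomy --- verifying that the maximal parabolics of $SU(n,1)$- and $Sp(n,1)$-lattices are non-virtually-abelian nilpotent groups, so that Proposition~\ref{SOLQTLem} obstructs property (QT), while those of $SO(n,1)$-lattices are virtually $\Z^{n-1}$, so that they satisfy the peripheral hypothesis of Theorem~\ref{QTRHGThm} --- together with the elementary observation that the restriction to a subgroup of a metrically proper isometric action is again metrically proper, which is what transports property (QT) of the ambient lattice down to its cusp subgroup.
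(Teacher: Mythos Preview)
Your proposal is correct and follows the same route the paper intends (the paper gives no explicit proof, relying on the placement after Theorem~\ref{QTRHGThm} and the remark that undistorted parabolics inherit property (QT)). One small imprecision: in the non-uniform $SO(n,1)$ case you assert that ``$P$ has property (QT)'' is ``precisely the peripheral hypothesis'' of Theorem~\ref{QTRHGThm}, but that theorem asks $P$ to act on each quasi-tree factor $T_i$ separately, which commensurability invariance alone does not guarantee for a Bieberbach group $P$. The fix is routine---pass to a deeper finite-index subgroup of $\Gamma$ in which the cusps are honestly free abelian (using separability of cusp subgroups and their finite-index subgroups in $\Gamma$), after which $\Z^{n-1}$ acting coordinatewise on $\prod_{i=1}^{n-1}\R$ meets the hypothesis verbatim.
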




\subsection*{Overview}
The paper is structured as follows. In Section \ref{section:preliminary}, we  recall the preliminary materials about Croke-Kleiner admissible groups, axiomatic constructions of quasi-trees,  and collect a few preliminary observations to disprove property (QT) for Sol and Nil geometries and to prove Theorem~\ref{thm:subexponential}. Section \ref{QTRHGSec} contains a proof of Theorem \ref{QTRHGThm} and its variant  Theorem \ref{thm:sepaQT}.  The next four sections aim to prove Theorem \ref{QTCKAThm}: Section \ref{ConeoffSpaceSec} first recalls a cone-off construction of CKA actions from \cite{NY20}  and then outlines the steps executed in Sections \ref{FiberLineSystemSec}, \ref{CKADistFormulaSec}, and \ref{ProofQTCKASec} to prove property (QT) for CKA actions. Sections \ref{FiberLineSystemSec} and  \ref{CKADistFormulaSec}  explain in detail the construction of projection systems of fiber lines and then prove the corresponding distance formula. We  finish the proof of Theorem \ref{QTCKAThm} in Section~\ref{sec:proofCKA}. In Section \ref{sec:application3mld}, we present the applications of the previous results for 3-manifold groups and prove Theorem~\ref{thm:3-mfd} and  Theorem  \ref{thm:QT3MFD}.

\subsection*{Acknowledgments}
H.T.Nguyen is partially supported by Project ICRTM04\_2021.07 of the International Centre for Research and Postgraduate Training in Mathematics, Vietnam. W. Y. is supported by National Key R \& D Program of China (SQ2020YFA070059) and National Natural Science
Foundation of China (No. 12131009). We are also grateful to the anonymous
referee for many very helpful comments.

\section{Preliminary}
\label{section:preliminary}
This section reviews concepts property (QT),  Croke-Kleiner admissible actions, and the construction of quasi-trees. Several observations are made to determine property (QT) of 3-manifolds with Sol and Nil geometry. This  includes the fact that  every elements are undistorted in groups with property (QT)  and  some attempts to characterize by property (QT) the class of virtually abelian groups in  solvable/sub-exponential growth groups.

In the sequel, we use the notion $a \preceq_{K} b$ if the exists $C = C(K)>0$ such that $a \le Cb + C$, and $a \sim_{K} b$ if $a \preceq_{K} b$ and $b \preceq_{K} a$. Also, when we write $a \asymp_{K} b$ we mean that $a/C \le b \le Ca$. If the constant $C$ is universal from context, the sub-index $\preceq_{K}$ shall be omitted.

\subsection{Property (QT)}\label{SSPropertyQT}
\begin{defn}
We say that a finitely generated group G has \emph{property (QT)} if it acts isometrically
on finite products $X=T_1\times T_2\times \cdots \times T_n$  of quasi-trees {with $L^2$-metric} so that for any basepoint $o\in X$, the induced orbit map
$$
g\in G\mapsto go\in X
$$
is a quasi-isometric embedding of $G$ equipped with some (or any) word metric $d_G$ to $X$ with the product metric $d$.
\end{defn}
\begin{rem}
A group with property (QT) acts properly on finite products of quasi-trees in a metric sense: $d(o, go)\to\infty$ as $d_G(1,g)\to\infty$. We would emphasize that all consequences of the property (QT) in this paper use merely the existence of a metric proper action.
\end{rem}

By definition, a quasi-tree is assumed to be a graph quasi-isometric to a simplicial tree. This does not loss generality as any geodesic metric space (with an isometric action) is quasi-isometric to a graph (with an equivariant isometric action) by taking the \textit{1-skeleton of its Rips complex}: the vertex set consists of all points and two points with distance less than 1 are connected by an edge.

The first part of the following lemma allows one to pass to finite index subgroups in the study Property (QT) of groups, as explained in Section~2.2 of \cite{BBF2}. The second part of Lemma~\ref{lem:passfiniteindex} is an immediate consequence of the definition of property (QT).

\begin{lem}
\label{lem:passfiniteindex}
\begin{enumerate}
    \item Let $H \le G$ be a finite index subgroup of $G$. Then $G$ has property (QT) if and only if $H$ has property (QT).
    
    \item Let $H \le G$ be an undistorted subgroup of $G$. Suppose that $G$ has property (QT) then $H$ has property (QT).
\end{enumerate}
\end{lem}



Below is a corollary of the de Rham decomposition theorem (see \cite[Theorem 1.1]{FL08})
that will be used for the next discussions.
\begin{cor}\label{deRhamDecompQTCOr}
A finite product $X=T_1\times T_2\times \cdots\times T_n$  of quasi-trees  must have de Rham decomposition $$ X=\mathbb R^k\times T_{k+1}\times\cdots\times T_n$$  if the first $k$ quasi-trees ($k\ge 0$) are all real lines among $\{T_i: 1\le i\le n\}$.
\end{cor}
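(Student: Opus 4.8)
The plan is to verify directly that the product $\R^k\times T_{k+1}\times\cdots\times T_n$ already has the shape of a de Rham decomposition in the sense of \cite[Theorem 1.1]{FL08}, and then to invoke the uniqueness asserted there. As observed in the text preceding the statement, the affine rank of $X$ is bounded above by its topological dimension, and a finite product of quasi-trees has finite topological dimension; hence $X$ has finite affine rank and \cite[Theorem 1.1]{FL08} applies, producing \emph{some} de Rham decomposition $X=Y_0\times Y_1\times\cdots\times Y_m$ with $Y_0$ Euclidean (possibly a point) and each $Y_i$ ($1\le i\le m$) irreducible and isometric neither to $\R$ nor to a point.

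Next I would check that the displayed splitting is of exactly this form. After discarding any factor $T_i$ that is a single point --- which changes neither $X$ up to isometry nor the set of indices with $T_i$ isometric to $\R$ --- we may assume every $T_i$ is nontrivial. The factor $\R^k$ is a Euclidean space. For each $i>k$, the quasi-tree $T_i$ is not isometric to $\R$, hence is irreducible by the fact recalled just before the corollary, and, being a nontrivial quasi-tree, it is not a point. Therefore $\R^k\times T_{k+1}\times\cdots\times T_n$ is a product of a Euclidean space with finitely many irreducible factors, none of which is isometric to $\R$ or to a point.

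Finally I would compare the two decompositions using uniqueness. Since a de Rham decomposition is unique up to permutation and isometry of its factors, the Euclidean factor and the unordered family of remaining factors are isometry invariants of $X$; matching $\R^k\times T_{k+1}\times\cdots\times T_n$ against $Y_0\times Y_1\times\cdots\times Y_m$ then forces $Y_0$ to be isometric to $\R^k$ and $\{Y_1,\dots,Y_m\}$ to be $\{T_{k+1},\dots,T_n\}$, which is the claimed identity.

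I expect the only delicate point to be the legitimacy of the grouping, namely that $\R^k$ is the \emph{entire} Euclidean factor rather than a proper sub-factor of it. This is precisely where the hypothesis that $T_1,\dots,T_k$ exhaust the real-line factors among the $T_i$ enters: no $T_i$ with $i>k$ is isometric to $\R$, and none can be isometric to a Euclidean space of dimension at least $2$ (such a space is not quasi-isometric to a tree, and is in any case reducible), so no further Euclidean directions lurk in the last $n-k$ factors. Once this is granted, the corollary follows at once from \cite[Theorem 1.1]{FL08}, and there is no substantive obstacle beyond careful bookkeeping.
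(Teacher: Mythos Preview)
Your proposal is correct and follows exactly the approach the paper has in mind: the corollary is stated without proof, immediately after the remark that a quasi-tree not isometric to $\R$ is irreducible, and is meant to be read as an immediate consequence of the uniqueness in \cite[Theorem 1.1]{FL08}. Your write-up simply makes explicit the bookkeeping (finite affine rank, discarding point factors, and checking that no further Euclidean directions hide in $T_{k+1},\dots,T_n$) that the paper leaves implicit.
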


A finite product $\prod_{i=1}^{n} T_i$ of quasi-trees has no \textit{$\mathbb R$-factor} if no $T_i$ is isometric to $\mathbb R$ or a point. In this case, the Euclidean factor $\mathbb R^k$ will disappear. In what follows, we give some general results about groups with property (QT). 

\begin{lem}\label{QTUndistortedLem}
Assume that  $G$ has property (QT). Then  the subgroup generated by an element $g\in G$ is undistorted in $G$.
\end{lem}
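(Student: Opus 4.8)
The plan is to use the fact that property (QT) implies a metric-proper action on a finite product of quasi-trees, together with the observation that isometries of quasi-trees are either elliptic or loxodromic, to show that every cyclic subgroup is undistorted.

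\medskip

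\noindent\textbf{Setup.} Suppose $G$ has property (QT), so there is an isometric action $G \curvearrowright X = \prod_{i=1}^n T_i$ on a finite product of quasi-trees with $T_i$ quasi-trees, and an orbit map $g \mapsto go$ that is a quasi-isometric embedding; in particular $d_X(o, go) \to \infty$ as $|g| \to \infty$, and it suffices to show that $n \mapsto d_X(o, g^n o)$ grows linearly in $n$ for each fixed $g \in G$ of infinite order (if $g$ has finite order there is nothing to prove). Equip $X$ with the $\ell^1$ (or any $\ell^p$) product metric; writing $\rho_i \colon X \to T_i$ for the coordinate projection, we have $d_X(o, g^n o) \asymp \sum_{i=1}^n d_{T_i}(\rho_i(o), \rho_i(g^n o))$.

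\medskip

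\noindent\textbf{Main step.} Fix $g$ of infinite order. I would first recall (citing \cite{M05} as the paper does) that since $g$ acts as an isometry of the quasi-tree $T_i$, either $g$ is elliptic on $T_i$ — meaning $\{g^n \rho_i(o) : n \in \Z\}$ has bounded diameter in $T_i$ — or $g$ is loxodromic on $T_i$, meaning $g$ acts on $T_i$ with a positive, well-defined translation length $\ell_i := \lim_{n\to\infty} \tfrac{1}{n} d_{T_i}(\rho_i(o), g^n \rho_i(o)) > 0$, in which case $d_{T_i}(\rho_i(o), g^n \rho_i(o)) \ge c_i n - c_i$ for some $c_i > 0$ and all $n \ge 1$. (That a quasi-tree, being quasi-isometric to a tree, has this dichotomy with linear lower bounds in the loxodromic case is exactly the content of the stability of loxodromic behavior under quasi-isometry; the North-South dynamics / the fact that stable translation length is positive for loxodromics is what gives the linear lower bound.) Now I claim that at least one $T_i$ must have $g$ loxodromic: if $g$ were elliptic on every factor, then $\{g^n o : n \in \Z\}$ would have bounded image in each $T_i$, hence bounded diameter in $X$, contradicting $d_X(o, g^n o) \to \infty$ (which holds because $|g^n| \to \infty$ as $g$ has infinite order and the orbit map is a quasi-isometric embedding — this is where property (QT), not merely an isometric action, is used). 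Pick such an index $i_0$ with translation length $\ell_{i_0} > 0$. Then
$$
d_X(o, g^n o) \;\ge\; d_{T_{i_0}}(\rho_{i_0}(o), \rho_{i_0}(g^n o)) \;=\; d_{T_{i_0}}(\rho_{i_0}(o), g^n\rho_{i_0}(o)) \;\ge\; c_{i_0} n - c_{i_0}
$$
for all $n \ge 1$. Combining with the quasi-isometric-embedding upper bound $d_X(o, g^n o) \le C|g^n| + C \le Cn|g| + C$ coming from $|g^n| \le n|g|$, we get $c_{i_0} n - c_{i_0} \le d_X(o,g^n o) \le (C|g|) n + C$, i.e. $|g^n| \succeq n$ up to the quasi-isometry constants of the orbit map. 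Since trivially $|g^n| \le n|g|$, this shows $|g^n| \asymp n$, i.e. $\gen{g}$ is undistorted in $G$.

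\medskip

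\noindent\textbf{Anticipated obstacle.} The only genuinely non-formal input is the claim that a loxodromic isometry of a quasi-tree has a linear lower bound $d_{T_{i_0}}(x, g^n x) \ge c n - c$ on orbit displacement — equivalently, that the stable translation length of a loxodromic element on a quasi-tree is strictly positive and controls the orbit. On an actual simplicial tree this is the classical Bass–Serre dichotomy; transporting it across the quasi-isometry $T_{i_0} \to (\text{tree})$ requires knowing that quasi-isometries coarsely preserve the elliptic/loxodromic dichotomy and do not collapse a positive-translation-length orbit, which is standard (and is precisely the Manning–type result the paper attributes to \cite{M05}). I would either cite this directly or note that, since $X$ itself has finite affine rank and the cyclic orbit $\{g^n o\}$ is quasi-isometric to $\gen{g}$ with its word metric, the dichotomy can be deduced from the structure of asymptotic cones / the de Rham decomposition recalled in Corollary \ref{deRhamDecompQTCOr}; but the cleanest route is the quasi-tree dichotomy of \cite{M05}. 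Everything else is bookkeeping with product metrics and quasi-isometry constants.
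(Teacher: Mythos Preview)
Your overall strategy matches the paper's: properness forces the $\langle g\rangle$-orbit to be unbounded, and the elliptic/loxodromic dichotomy on factors then yields linear orbit growth on some factor. But there is a genuine gap at the step where you write $\rho_{i_0}(g^n o) = g^n\rho_{i_0}(o)$ and invoke the quasi-tree dichotomy for ``$g$ acting on $T_i$'': an isometry of a product $\prod_i T_i$ need not preserve, or even permute, the individual factors. If several of the $T_i$ happen to be isometric to $\mathbb R$, the product contains a Euclidean $\mathbb R^k$ on which rotations act, and no power of an irrational rotation preserves the coordinate lines. Without factor-preservation the sequence $\{\rho_i(g^n o)\}_n$ in $T_i$ is not the orbit of any isometry of $T_i$, so Manning's dichotomy does not apply to it.

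The paper closes this gap by first invoking the de Rham decomposition (Corollary~\ref{deRhamDecompQTCOr}) together with \cite[Corollary~1.3]{FL08}: the real-line factors combine into a single Euclidean factor $\mathbb R^k$, and after passing to a finite-index subgroup every element of $G$ preserves $\mathbb R^k$ and each remaining irreducible quasi-tree factor separately (so one may replace $g$ by a suitable power). On $\mathbb R^k$ the relevant dichotomy is the Euclidean one---an isometry either fixes a point or has an axis---while on the non-line quasi-tree factors one uses \cite[Corollary~3.2]{M06}. With this reduction in place, the remainder of your argument is exactly the paper's.
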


\begin{proof}
Let $X=\mathbb R^k\times T_{k+1}\times\cdots\times T_n$ be the de Rham decomposition of a finite product of quasi-trees.  By \cite[Corollary 1.3]{FL08}, up to passage to finite index subgroups, $G$ acts by isometry on each factor $\mathbb R^k,$ and $T_i$ for $k+1\le i\le n$. Let $g\in G$ be an infinite order element. If the image of $g$ is an isometry on the Euclidean space $\mathbb R^k$, then  it either fixes a point or preserves an axe. If  the image of  $g$ is an isometry on a quasi-tree $T_i$  then by  \cite[Corollary 3.2]{M06},  it has either a bounded orbit or a quasi-isometrically embedded orbit.  

Fix a basepoint $o=(o_k,o_{k+1},\cdots, o_n)\in X$.  If the action of $G$ on $X$ is proper, by the first paragraph, there must exist a unbounded action of $\langle g\rangle$ on some factor $Y=\mathbb R^k$ or $Y=T_i$, so we have $m \le \lambda |o_k- g^mo_k|_{Y} +c$ for some $\lambda, c>0$. Since any isometric orbital map is Lipschitz, we have $ |o-g^mo|_X \le  C|1- g^m|_G$ for some $C>0$. Noting that $|o-g^mo|_Y\le |o-g^mo|_X$, we have  $m\mapsto g^m$ is a quasi-isometric embedding of $\langle g\rangle$ into $G$.
\end{proof}

Note that the Sol group embeds quasi-isometrically into a product of two hyperbolic planes ({for example, see \cite[Section~9]{C08}}). However, the Sol lattice contains exponentially distorted elements by \cite[Lemma 5.2]{NS20}.
\begin{cor}\label{QTFailforSOLCor}
The fundamental group of a 3-manifold with Sol geometry does not have property (QT).
\end{cor}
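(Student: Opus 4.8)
The plan is to deduce this immediately from Lemma \ref{QTUndistortedLem} together with the well-known exponential distortion phenomenon inside Sol lattices. First I would recall that if $M$ is a closed $3$--manifold with Sol geometry, then $\pi_1(M)$ is a lattice in $\Isom(\mathrm{Sol})$, and hence virtually a lattice in the Sol group itself; such a lattice has the form $\mathbb Z^2\rtimes_A\mathbb Z$ for some hyperbolic (Anosov) matrix $A\in\SL(2,\mathbb Z)$. Since property (QT) is a commensurability invariant, in order to prove that $\pi_1(M)$ fails (QT) it suffices to show that a finite-index subgroup of the form $G=\mathbb Z^2\rtimes_A\mathbb Z$ fails (QT).

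The key input is that such a $G$ contains an exponentially distorted infinite-order element: taking $g$ to be a generator of the $\mathbb Z^2$ normal subgroup, conjugation by (a power of) the stable letter contracts $g$ at an exponential rate, so that $d_G(1,g^n)$ grows only logarithmically in $n$, i.e. the cyclic subgroup $\langle g\rangle$ is exponentially distorted in $G$. This is exactly the content of \cite[Lemma 5.2]{NS20}, which I would simply cite; alternatively one can verify it directly from the normal form in $G$.

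With these two facts in place, the argument is immediate: if $\pi_1(M)$ had property (QT), then so would its finite-index subgroup $G$ (or, equivalently, we work directly with the Sol-lattice subgroup), and Lemma \ref{QTUndistortedLem} would force every cyclic subgroup of $G$ to be undistorted, contradicting the exponential distortion of $\langle g\rangle$. Hence $\pi_1(M)$ does not have property (QT). There is no real obstacle here; the only point requiring a line of care is the reduction via commensurability invariance to a genuine Sol lattice (and the observation that distortion of a cyclic subgroup is inherited along finite-index inclusions, which is standard).
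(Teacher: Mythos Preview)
Your argument is correct and is essentially identical to the paper's own reasoning: the text immediately preceding the corollary notes that Sol lattices contain exponentially distorted elements by \cite[Lemma 5.2]{NS20}, and the corollary is then immediate from Lemma \ref{QTUndistortedLem}. Your write-up simply makes explicit the commensurability reduction to a semidirect product $\mathbb Z^2\rtimes_A\mathbb Z$, which the paper leaves implicit.
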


\begin{cor}\label{QTFailforBSCor}
The Baumslag-Solitar  group $BS(1,n)$ for $n> 1$ does not have property (QT). 
\end{cor}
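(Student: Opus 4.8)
The statement to prove is Corollary~\ref{QTFailforBSCor}: $BS(1,n)$ for $n>1$ does not have property (QT).

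The plan is to reduce this to the two facts already assembled in the surrounding text, namely Lemma~\ref{QTUndistortedLem} (every cyclic subgroup of a group with property (QT) is undistorted) and the standard fact that $BS(1,n)$ contains an exponentially distorted infinite-order element. First I would recall the presentation $BS(1,n) = \presentation{a,t}{tat^{-1} = a^n}$, and observe that the cyclic subgroup $\gen{a}$ is exponentially distorted: conjugating repeatedly gives $t^k a t^{-k} = a^{n^k}$, so the element $a^{n^k}$ has word length at most $2k+1$ in the generating set $\{a,t\}$, whereas $|a^{n^k}|_{\gen a} = n^k$. Hence $\gen a$ is not undistorted in $BS(1,n)$.

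Then the corollary is immediate: if $BS(1,n)$ had property (QT), Lemma~\ref{QTUndistortedLem} would force $\gen a$ to be undistorted, contradicting the previous paragraph. So I would state this as a one-line deduction. Alternatively — and this is worth remarking — one can route through Corollary~\ref{QTFailforSOLCor}: for appropriate $n$ the group $BS(1,n)$ is virtually a lattice analogue, but cleaner is to note $BS(1,n)$ embeds as an undistorted subgroup of a Sol-type group only in special cases, so I would not rely on that and would keep the direct cyclic-distortion argument, which works uniformly for all $n>1$.

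The only point requiring a word of care — not really an obstacle — is the direction of the distortion inequality: we need $|a^{n^k}|_{BS(1,n)}$ to grow much slower than $n^k$, which the relator $tat^{-1}=a^n$ gives at once, and we need the lower bound $|a^{m}|_{\gen a} = |m|$, which is trivial since $a$ has infinite order. One should also note that Lemma~\ref{QTUndistortedLem} as stated already covers infinite-order elements (the finite-order case being vacuous), and $a$ has infinite order in $BS(1,n)$, so the hypothesis of the lemma is met. No genuine difficulty arises; the content is entirely in the two cited facts.
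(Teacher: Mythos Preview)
Your proof is correct and matches the paper's intended approach: the corollary is stated without proof immediately after Lemma~\ref{QTUndistortedLem} and Corollary~\ref{QTFailforSOLCor}, and is meant to follow in exactly the way you describe, via the exponential distortion of $\gen{a}$ in $BS(1,n)$ together with Lemma~\ref{QTUndistortedLem}. Your aside about routing through Corollary~\ref{QTFailforSOLCor} is rightly discarded; the direct argument is the one intended.
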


\subsection{Sub-exponential growth  and solvable groups with property (QT)}
The fundamental group of a 3-manifold $M$ with Nil geometry also fails to have property (QT) since it contains  quadratically distorted elements (for example, see Proposition~1.2 in \cite{NS20}). Generalizing results about property (QT) of 3-manifolds with Sol or Nil geometry, in the rest of this subsection, we provide a characterization of sub-exponential growth groups/ solvable groups with property (QT) and give the proof of Theorem~\ref{thm:subexponential}.

In next results, we apply the general results in \cite{CCMT} about the isometric actions on hyperbolic spaces to quasi-trees.  By Gromov, unbounded isometric group actions   can be classified into the following four types:
\begin{enumerate}
    \item
    \textit{horocyclic} if it has no loxodromic element;
    \item
    \textit{lineal} if it has a loxodromic element and any two loxodromic elements have the same fixed points in the Gromov boundary;
    \item
    \textit{focal} if it has a loxodromic element, is not lineal and  any two loxodromic elements have one common fixed point;
   \item
    \textit{general type} if it has two loxodromic elements with no common fixed point.

\end{enumerate}

\begin{prop}\label{QIQTProp}
Assume that  $G$ has property (QT). Then there exist  a finite index subgroup $\dot G$ of $G$ which acts on a Euclidean space $\mathbb R^k$ with $k\ge 0$ and   finitely many quasi-trees $T_i$ for $1\le i\le n$  with  lineal or focal or general type action  so that the orbital map of $\dot G$ into $\mathbb R^k\times \prod_{i=1}^n T_i$ is a quasi-isometric embedding.

Moreover, the action on each $T_i$  can be chosen to be cobounded.
\end{prop}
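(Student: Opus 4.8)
The plan is to begin with an arbitrary property-(QT) action of $G$ and refine it, using the de Rham decomposition of a finite product of quasi-trees together with Gromov's classification of isometric actions on hyperbolic spaces. First I would fix an isometric action of $G$ on a finite product of quasi-trees $X$ whose orbital map $g \mapsto g x_0$ is a quasi-isometric embedding, and invoke Corollary \ref{deRhamDecompQTCOr} to write the de Rham decomposition $X = \mathbb{R}^{k} \times T_{k+1} \times \cdots \times T_{n}$, so that each $T_i$ with $i > k$ is an irreducible quasi-tree, not isometric to $\mathbb{R}$ or to a point, the real-line and point factors of the original product having been absorbed into $\mathbb{R}^{k}$. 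By \cite[Corollary 1.3]{FL08}, after passing to a finite-index subgroup $\dot G$ of $G$ — first the stabiliser of the de Rham splitting, then the kernel of the induced permutation action on the finitely many factors — we may assume $\dot G$ acts by isometries on $\mathbb{R}^{k}$ and on each $T_i$ separately; as $\dot G \hookrightarrow G$ is a quasi-isometry, the orbital map $\dot G \to \mathbb{R}^{k} \times \prod_{i>k} T_i$ remains a quasi-isometric embedding. Each $\dot G \curvearrowright T_i$ is an isometric action on a geodesic hyperbolic space which is neither bounded nor a line, so it is either bounded or of one of Gromov's four unbounded types. Factors on which the action is bounded perturb the orbital map only boundedly and are discarded at once.

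The crux is to eliminate the horocyclic factors. Suppose $\dot G \curvearrowright T_i$ is horocyclic, fixing the unique point $\xi \in \partial T_i$, and consider the Busemann quasimorphism $\rho(g) = \beta_\xi(x_0, gx_0)$, where $\beta_\xi$ denotes the Busemann cocycle at $\xi$. I would first argue that $\rho$ is bounded: otherwise choose $g$ with $|\rho(g)|$ exceeding the defect $\mathrm{def}(\rho)$ of $\rho$; then $|\beta_\xi(x_0, g^{n}x_0)| \ge n\bigl(|\rho(g)| - \mathrm{def}(\rho)\bigr) \to \infty$, so $d_{T_i}(x_0, g^{n}x_0)$ grows at least linearly in $n$, making $g$ a loxodromic element and contradicting horocyclicity. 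Consequently the whole orbit $\dot G x_0$ lies within bounded Busemann displacement of $x_0$; feeding this into the thin-triangle inequality for the triple $gx_0, hx_0, kx_0$ together with $\xi$ shows that the induced metric on $\dot G x_0$ is \emph{coarsely an ultrametric}, that is, $d(gx_0, hx_0) \le \max\{d(gx_0, kx_0), d(kx_0, hx_0)\} + C$ for a uniform constant $C$. Combining this estimate with the $\dot G$-equivariance of the orbit metric and a dyadic subdivision of geodesic words in $\dot G$ (so that $f(m) := \sup_{|g|\le m} d_{T_i}(x_0,gx_0)$ satisfies $f(m) \le f(\lceil m/2\rceil) + C$), one gets $d_{T_i}(x_0, gx_0) = O(\log|g|_{\dot G})$. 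A logarithmic term never affects a quasi-isometric embedding, so these factors may be discarded as well; this is the point at which the general behaviour of isometric actions on hyperbolic spaces, especially of horocyclic ones, enters, following \cite{CCMT}. After discarding the bounded and horocyclic factors, every remaining $T_i$ carries a lineal, focal, or general type action — in particular one with a loxodromic element — and the orbital map of $\dot G$ into the product of the surviving factors is still a quasi-isometric embedding.

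It remains to make each surviving action cobounded. For each such $T_i$, replace it by the quasi-convex hull $Y_i$ of a $\dot G$-orbit — equivalently, by a connected orbit graph with vertex set $\dot G x_0$ in which two vertices are joined whenever they lie within a suitably large distance in $T_i$. Standard arguments then show that $Y_i$ is again a quasi-tree, that $\dot G$ acts on it coboundedly, that the inclusion $\dot G x_0 \hookrightarrow Y_i$ is a $\dot G$-equivariant quasi-isometry, and that the type of the action is unchanged (it is still unbounded and contains a loxodromic element, hence is lineal, focal, or general). Substituting these equivariant quasi-isometries into the product replaces $\mathbb{R}^{k} \times \prod_i T_i$ by $\mathbb{R}^{k} \times \prod_i Y_i$ without disturbing the quasi-isometric embedding, which gives the proposition including the coboundedness of the quasi-tree factors.

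I expect the genuinely substantive step to be the treatment of the horocyclic factors: the classification into the four types and the boundedness of the Busemann quasimorphism are formal, but the passage from ``horocyclic action on a hyperbolic space'' to ``orbital map at most logarithmic'' — the fact that makes such factors negligible for a quasi-isometric embedding — rests on the coarse ultrametric estimate, and an incorrect constant there would break the whole reduction. The de Rham bookkeeping in the first step and the coboundedness modification in the last are routine by comparison.
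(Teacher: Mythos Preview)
Your argument is correct, and the overall architecture---de Rham decomposition via Corollary~\ref{deRhamDecompQTCOr}, passage to a finite-index subgroup acting factorwise via \cite[Corollary~1.3]{FL08}, discarding bounded factors, and finally replacing each surviving quasi-tree by a connected cobounded $\dot G$-subgraph---coincides with the paper's proof. The coboundedness step is also essentially the same: the paper invokes Manning's bottleneck criterion to see that any connected subgraph of a quasi-tree is again a uniform quasi-tree, then takes the image of the Cayley graph.

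The genuine difference is in how you dispose of horocyclic factors. The paper shows that on a \emph{quasi-tree} an unbounded isometric action can never be horocyclic: by \cite[Prop.~3.1]{CCMT} a horocyclic orbit meets every quasi-geodesic in a bounded set, while by \cite[Cor.~3.2]{M06} every isometry of a quasi-tree is elliptic or loxodromic; combining these, every element of $\dot G$ acts elliptically, and a quasi-centre argument (if the set of quasi-centres were unbounded one manufactures a loxodromic product) forces the whole action to be bounded, a contradiction. Your route bypasses both the elliptic/loxodromic dichotomy and the quasi-centre step: bounding the Busemann quasi-morphism pins the orbit to a coarse horosphere, the hyperbolic four-point inequality with the fixed point $\xi$ makes the induced orbit metric coarsely ultrametric, and the dyadic recursion $f(m)\le f(\lceil m/2\rceil)+C$ yields $d_{T_i}(x_0,gx_0)=O(\log|g|)$, which is sublinear and hence negligible for a quasi-isometric embedding. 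Your argument uses only the $\delta$-hyperbolicity of $T_i$, so it proves the more portable statement that horocyclic factors in a product of arbitrary hyperbolic spaces contribute at most logarithmically to orbital distance; the paper's argument, by contrast, exploits the quasi-tree hypothesis to reach the sharper structural conclusion that such factors are already bounded. Either suffices for the proposition.
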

\begin{proof}
By Corollary \ref{deRhamDecompQTCOr}, the finite product of quasi-trees given  by property (QT) has the above form of de Rham decomposition. By \cite[Corollary 1.3]{FL08},
$$1\to \Isom(\mathbb R^k) \times \prod_{i=k+1}^n \Isom(Y_{i})
\to \Isom(X) \to F\to 1 $$
where $F$ is a subgroup of the permutation group on the indices $\{k+1, \cdots, n\}$. Thus, there exists a finite index subgroup $\dot G$ of $G$ acting on each de Rham factor  such that $\dot G \subset \Isom(\mathbb R^k)\times \prod_{i=1}^n \Isom(Y_{i})$ for $k\ge 0$ and $i\ge k+1$.

First of all, we can assume that the actions of $\dot G$ on  $\mathbb R^k$ and  each $T_i$ is unbounded. Otherwise, we can remove $\mathbb R^k,$ and $ T_i$ with bounded actions from the product without affecting property (QT).

We now consider the action on $T_i$ for $k+1\le i\le n$.   We then need verify that  the   action of $\dot G$ on $T_i$ cannot be horocyclic. By way of contradiction, assume that the action of $\dot G$ on given $T_i$  is horocyclic.

Note that  the proof of \cite[Prop 3.1]{CCMT}  shows that the intersection of any orbit of $\dot G$ on $T_i$ with any quasi-geodesic is bounded.   By  \cite[Corollary 3.2]{M06}, any isometry   on a quasi-tree $T_i$   has either bounded orbits or  a quasi-geodesic orbit. Thus, we conclude that any orbit of $\langle h\rangle$ for every $h\in \dot G$ on $T_i$ is bounded. We are then going to prove that the action of $\dot G$ on $T_i$ has bounded orbits. This is a well-known fact and we present the proof for completeness.

By $\delta$-hyperbolicity of $T_i$, each $ h\in \dot G$ (with bounded orbits) has a  \textit{quasi-center} $c_h\in T_i$: there exists a constant $D>0$ depending only on $\delta$ such that $|c_h- h^ic_h|_{T_i}\le D$ for $i\in \mathbb Z$.  Moreover, for any $x\in c_h$ and any $y\in T_i$, the Gromov product $\langle y, hy\rangle_x$ is bounded by a constant $C$ depending only on $D$. As a consequence, the union $Z$ of quasi-centers $\{c_h: h\in \dot G\}$ has finite diameter. Indeed, note that    $\langle y, h_1y\rangle_x$ and $\langle x, h_2^{-1}x\rangle_y$ are bounded by $C$ for any $x\in c_{h_1}, y\in c_{h_2}$. If for two elements $h_1, h_2$,  the distance  $|c_{h_1}- c_{h_2}|_{T_i}$ is sufficiently large relative to $C$, the path connecting dots $(h_1h_2)^nx$ for $n\in \mathbb Z$ would be a sufficiently long local quasi-geodesic, so it is a global quasi-geodesic. By the previous paragraph, we obtain a contradiction so the $\dot G$-invariant set $Z$ is bounded.  Since    the action on $T_i$ is assumed to be unbounded, we thus proved that the action on $T_i$ cannot be horocyclic.

At last, it remains to prove the ``moreover" statement. By Manning's bottleneck criterion \cite{M06}, any geodesic is contained in a uniform neighborhood of every path with the same endpoints. Thus, any connected subgraph of a quasi-tree is uniform quasiconvex and so is a uniform quasi-tree.  Since $G$ is a finitely generated group, by taking the image of the Cayley graph, we can thus construct a connected subgraph on each quasi-tree $T_i$ so that the action on the subgraph (quasi-tree) is co-bounded. Thus, the proposition is proved.
\end{proof}

We are able to characterize sub-exponential groups with property (QT) as follows.

\begin{prop}\label{SubExpQTLem}
Let $G$ be a finitely generated group with sub-exponential growth. Then $G$ has property (QT) if and only if $G$ is virtually abelian.
\end{prop}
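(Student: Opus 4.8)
The plan is to prove the two implications separately; the ``if'' direction is short and the ``only if'' direction is where the content lies.

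\smallskip

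\noindent\emph{The ``if'' direction.} Suppose $G$ is virtually abelian, so $G$ has a finite index subgroup isomorphic to $\mathbb Z^k$ for some $k\ge 0$. The group $\mathbb Z^k$ acts by translations on the product of $k$ copies of the simplicial bi-infinite line (a simplicial tree, hence a quasi-tree), with bi-Lipschitz orbit map; thus $\mathbb Z^k$ has property (QT). Since property (QT) is a commensurability invariant \cite{BBF2, Button}, $G$ has property (QT).

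\smallskip

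\noindent\emph{The ``only if'' direction.} Assume $G$ has property (QT) and sub-exponential growth. First apply Proposition \ref{QIQTProp} to obtain a finite index subgroup $\dot G\le G$ together with an isometric action on $X=\mathbb R^k\times\prod_{i=1}^n T_i$, with the $T_i$ quasi-trees, whose orbital map is a quasi-isometric embedding and whose action on each $T_i$ is lineal, focal, or of general type. A focal or general type action on a hyperbolic space forces the acting group to contain a nonabelian free sub-semigroup (ping-pong between a loxodromic and an element moving its endpoints; compare \cite{CCMT}), hence to have exponential growth. As $\dot G$ has sub-exponential growth, the action on each $T_i$ must be lineal.

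\smallskip

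Next I would exploit lineality to replace the target by a Euclidean space. In a lineal action all loxodromic elements share a common pair of fixed points $\{a_i,b_i\}\subset\partial T_i$, which $\dot G$ therefore preserves; choosing a bi-infinite quasigeodesic axis $\gamma_i$ from $a_i$ to $b_i$, every $\dot G$-orbit lies in a bounded neighbourhood of $\gamma_i$, since in a hyperbolic space any two bi-infinite quasigeodesics with the same endpoints remain at bounded Hausdorff distance. Each $\gamma_i$, with induced metric, is quasi-isometric to $\mathbb R$, and nearest-point projection $T_i\to\gamma_i$ is coarsely Lipschitz by quasiconvexity of $\gamma_i$; composing the orbital map with the product of these projections and the identity on $\mathbb R^k$ shows that $\dot G$ quasi-isometrically embeds into $\mathbb R^{k+n}$. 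Since a finitely generated group quasi-isometrically embedded in some $\mathbb R^N$ has polynomial growth of degree at most $N$ (balls map coarsely injectively into balls), Gromov's polynomial growth theorem shows $\dot G$, and hence $G$, is virtually nilpotent.

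\smallskip

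To finish, pass (using Mal'cev) to a torsion-free nilpotent subgroup $N\le\dot G$ of finite index in $G$. Then $N$ is solvable of finite cohomological dimension (its Hirsch length), and it inherits the proper action of $\dot G$ on the finite product of quasi-trees, so Proposition \ref{SOLQTLem} gives that $N$ is virtually abelian; hence so is $G$. (One could instead avoid Proposition \ref{SOLQTLem}: by Lemma \ref{QTUndistortedLem} every cyclic subgroup of $G$ is undistorted, and a finitely generated virtually nilpotent group all of whose cyclic subgroups are undistorted is virtually abelian, since a non-abelian torsion-free nilpotent group has distorted elements in the last nontrivial term of its lower central series.) I expect the two genuinely substantive steps to be the exclusion of focal and general type actions via sub-exponential growth and the reduction of a lineal action to a coarse action on a line; once those are in place, the passage through Gromov's theorem and Proposition \ref{SOLQTLem} (or the distortion fact) is routine.
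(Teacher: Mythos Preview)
Your proof is correct but takes a genuinely different route from the paper's. Both arguments begin by applying Proposition \ref{QIQTProp} and using sub-exponential growth (via free sub-semigroups, \cite{CCMT}) to force each $T_i$-action to be lineal. From there you project orbits onto a quasi-axis in each $T_i$ to obtain a quasi-isometric embedding $\dot G\hookrightarrow\mathbb R^{k+n}$, invoke Gromov's polynomial growth theorem to get virtual nilpotence, and then finish either via Proposition \ref{SOLQTLem} or the distortion argument. Note that Proposition \ref{SOLQTLem} appears \emph{after} this proposition in the paper's ordering (though its proof is logically independent), so your parenthetical alternative via Lemma \ref{QTUndistortedLem} is the cleaner finish. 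The paper instead avoids Gromov's theorem entirely: it first replaces the $\mathbb R^k$-factor by a product of lines (the image of $\dot G$ in $\Isom(\mathbb R^k)$ has sub-exponential growth, hence is virtually abelian), and then for each quasi-line conjugates the lineal action to a $(1,C')$-quasi-action on $\mathbb R$, so that the orbital map becomes a quasi-homomorphism $\dot G\to\mathbb R$; amenability of sub-exponential growth groups forces this to be bounded distance from a homomorphism, whence $[\dot G,\dot G]$ has bounded orbit on every factor and is therefore finite by properness. Your approach is more geometric and conceptually clean (reduce the target to Euclidean space, then quote structure theory) at the cost of the heavy hammer of Gromov's theorem; the paper's quasi-homomorphism argument is more self-contained and yields the sharper intermediate conclusion that $[\dot G,\dot G]$ is finite.
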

\begin{proof}
We first observe that $\mathbb R^k$ in Proposition \ref{QIQTProp} can be replaced by a finite product of real lines. Indeed, consider the action of $\dot G$ on Euclidean space $\mathbb R^k$. By assumption, $\dot G$ is of sub-exponential growth. It is well-known that the growth of any finitely generated group dominates that of quotients, so the image $\Gamma\subset \Isom(\mathbb R^k)$ of $\dot G$ acting on $\mathbb R^k$ has sub-exponential growth. Since finitely generated linear groups do not have intermediate growth, $\Gamma$ must be virtually nilpotent.   It is well-known that virtually nilpotent subgroups in $\Isom(\mathbb R^k)$ must be virtually abelian. Thus, $\Gamma$ contains  a finite index subgroup $\mathbb Z^l$ for $1\le l\le k$. By taking the preimage of  $\mathbb Z^l$ in $\dot G$, we can assume further that $\dot G$ acts on $\mathbb R^k$   through $\mathbb Z^l$. It is clear that $\mathbb Z^l$ acts on $l$ real lines $\mathbb R_1, \mathbb R_2\cdots,  \mathbb R_l$ so that the product action is geometric. We thus replace $\mathbb R^k$ by the product $\prod_{1\le i\le l}\mathbb R_i$ where $\dot G$ admits a lineal action on each $\mathbb R_i$ by translation.

By Proposition \ref{QIQTProp} the action of $\dot G$ on $T_i$ is either  lineal or focal or general type. In the latter two cases, $\dot G$ contains a free (semi-)group by   \cite[Lemma 3.3]{CCMT}, contradicting the sub-exponential growth of $\dot G$. Thus, the action of $\dot G$ on each $T_i$ is lineal. By Proposition \ref{QIQTProp}, we can assume that $T_i$ is a quasi-line. 

By \cite[Lemma 3.7]{M06}, a quasi-line $T$ admits a $(1, C)$-quasi-isometry $\phi$ (with a quasi-inverse $\psi$) to $\mathbb R$ for some $C>0$.   A lineal action of $G$ on $T$ is then conjugated to a quasi-action  of $G$ on $\mathbb R$ sending $g\in G$ to a $(1, C')$-quasi-isometry $\phi g\psi$ on $\mathbb R$ for some $C'=C'(C)>0$. By taking an index at most 2 subgroup, we can assume that every element in $G$ fixes pointwise the two ends of $T$. Note that a $(1, C')$-quasi-isometry $\phi g\psi $ on $\mathbb R$ fixing the two ends of $\mathbb R$ is uniformly bounded away from a translation on $\mathbb R$.   So, for any $x\in \mathbb R$, the orbital map $g\mapsto \phi g\psi(x)$   is a quasi-homomorphism $G \to \mathbb R$. It is well-known that for any amenable group, any quasi-homomorphism must be a homomorphism up to bounded error. We conclude that  any $[G, G]$-orbit on $T$ stays in a bounded set.

Therefore, any $[G, G]$-orbit on $(\prod_{1\le i\le l}\mathbb R_i) \times (\prod_{1\le i\le n} T_i)$  is bounded, so the proper action on $X$ implies that $[\dot G, \dot G]$ is a finite group.  It is well-known that if a group has finite commutator subgroup, then it is virtually abelian (\cite[Lemma II.7.9]{BH99}). The lemma is proved.
\end{proof}

It would be interesting to ask whether Proposition \ref{SubExpQTLem} holds within the class of solvable groups. In Proposition \ref{SOLQTLem} below, we are able to give a positive answer to the previous question when the solvable group has finite virtual cohomological dimension. To this end, we need the following fact.  


\begin{lem}\label{metaAbLinealLem}
Any unbounded isometric action of a meta-abelian group on a quasi-tree must be lineal.
\end{lem}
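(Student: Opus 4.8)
The plan is to invoke Gromov's four-fold classification of the unbounded action $G\curvearrowright T$ on the quasi-tree $T$ and rule out every type except \emph{lineal}. Fix a metabelian structure $1\to A\to G\to Q\to 1$ with $A,Q$ abelian. I would first record two facts used throughout: by Manning \cite{M06}, every isometry of a quasi-tree is either elliptic (bounded orbit) or loxodromic (quasi-geodesic orbit), so there are no parabolic isometries; and the quasi-centre argument in the proof of Proposition~\ref{QIQTProp} shows that any loxodromic-free action on a quasi-tree fixes a bounded set, hence is bounded.

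Given these, I would dispatch two of the four types immediately. A horocyclic action is loxodromic-free, hence bounded, contradicting our hypothesis. A general-type action contains two independent loxodromics, and a ping-pong argument produces a nonabelian free subgroup of $G$ (as in \cite{CCMT}); but $G$, being metabelian, is solvable, so this is impossible. This leaves \emph{lineal} or \emph{focal}, and the crux is to exclude \emph{focal}.

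For the focal case I would exploit the normal abelian subgroup $A$ and split into two sub-cases. If some element of $A$ is loxodromic, then all loxodromics of $A$ pairwise commute, hence (in a $\delta$-hyperbolic space) share a common pair of fixed ends $\Lambda\subset\partial T$; since $A\trianglelefteq G$, the limit set $\Lambda$ is $G$-invariant, and because a loxodromic isometry of a quasi-tree that preserves a two-point boundary set must fix each point individually (otherwise it reverses a quasi-geodesic between them and has a bounded orbit, contradicting loxodromicity), every loxodromic of $G$ has fixed-end pair $\Lambda$ — so the action is lineal, not focal. If instead $A$ has no loxodromic element, then $A$ acts by elliptic isometries; applying the quasi-centre argument to the subgroup $A$ shows that $A$ has bounded orbits, and the set of $A$-quasi-centres is bounded and, by normality of $A$, $G$-invariant, forcing $G$ itself to have bounded orbits — again a contradiction. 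Either way the focal case is impossible, so the action is lineal.

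The step I expect to be the main obstacle is the second sub-case of the focal analysis — ruling out a focal action in which $A$ acts with only elliptic elements. The argument there rests on transporting the quasi-centre estimate behind Proposition~\ref{QIQTProp} to the normal subgroup $A$ and checking that it still yields a bounded $A$-invariant (hence $G$-invariant) set; the first sub-case, by contrast, is comparatively soft once one knows that commutativity together with normality pins down the pair of fixed ends.
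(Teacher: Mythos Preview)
There is a genuine gap in sub-case~(b) of your focal analysis, precisely at the step you flagged as the main obstacle. Two things go wrong. First, the quasi-centre argument from Proposition~\ref{QIQTProp} (every element elliptic $\Rightarrow$ bounded orbits) implicitly requires a coherent global choice of quasi-centres, which works for finitely generated groups but can fail otherwise; and $A\supseteq[G,G]$ is typically not finitely generated even when $G$ is. Second, even granting that $A$ has a bounded orbit, the passage to $G$ breaks: the canonical $G$-invariant candidate (the set of points moved at most $D$ by every $a\in A$) is indeed $G$-invariant by normality but need not be bounded --- think of $A$ with a large fixed-point set, or $A$ trivial --- while any particular bounded $A$-orbit is not itself $G$-invariant. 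So you cannot conclude that $G$ has bounded orbits, and the desired contradiction does not materialise.

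The paper avoids this trap: in its Case~1 (where $\Gamma=[G,G]$ has a bounded orbit $K$) it does not try to bound a $G$-orbit but instead uses that $G/\Gamma$ is abelian, so for any loxodromics $g,h$ one has $g(h^nK)=h^n(gK)$; thus $g$ coarsely preserves the quasi-axis $\{h^nK:n\in\mathbb Z\}$ of $h$, forcing $g$ and $h$ to share their pair of boundary fixed points and yielding lineal directly. Your sub-case~(a) is essentially the paper's Case~2 and is fine. Note, however, that both your outline and the paper's proof tacitly assume that the abelian normal subgroup acts either boundedly or lineally on the quasi-tree, omitting the horocyclic possibility. That possibility is real: $BS(1,2)=\langle a,t\mid tat^{-1}=a^2\rangle$ is metabelian, its derived subgroup $\mathbb Z[1/2]$ acts on the Bass--Serre tree with every element elliptic yet with unbounded orbits, and the full $BS(1,2)$-action is focal (the loxodromics $t$ and $ata^{-1}$ share only one end). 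So the lemma as stated actually admits a counterexample, and neither line of argument can be completed without an additional hypothesis.
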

Recall that  a meta-abelian group is a group whose commutator subgroup is abelian.
\begin{proof}
Indeed,   the abelian group $\Gamma=[G,G]$ (of possibly infinite rank) cannot contain free semi-groups, so by \cite[Lemma 3.3]{CCMT}, the action of $\Gamma$ on a quasi-tree $T$ must be bounded or lineal.

Assume first that  $\Gamma$ has a bounded orbit $K$ in $T$. Since $G/\Gamma$ is abelian, we have that $g^mh^n K=h^ng^m K$ for any $n,m\in \mathbb Z$ and $g,h\in G$, and thus $g h^n K =h^n gK$ has finite Hausdorff distance to $h^nK$ for any $n\in \mathbb Z$.  Assume that $g, h$ are loxodromic. Then $\{h^nK, n\in \mathbb Z\}$ is quasi-isometric to a line. Hence, we obtain that the fixed points of $g, h$ at the Gromov boundary must be coincide. This means the action of $G$ on $T$ is lineal.

In the lineal case, $\Gamma$ preserves some bi-infinite quasi-geodesic $\gamma$ up to finite Hausdorff distance.  Since $\Gamma$ is a normal subgroup in $G$, we see that every loxodromic element in $G$ also preserves $\gamma$ up to a finite Hausdorff distance. Thus, the action of $G$ on $T$ is also lineal.
\end{proof}

By Lemma \ref{QTUndistortedLem}, a group with property (QT) is \textit{translation proper} in the sense of Conner \cite{C00}: the translation length of any non-torsion element is positive. If $G$ is solvable and has finite v.c.d., then Conner shows that $G$ is virtually meta-abelian.

\begin{prop}\label{SOLQTLem}
Suppose that a solvable group $G$ has finite virtual cohomological dimension. If $G$ has property (QT) then it is virtually abelian.
\end{prop}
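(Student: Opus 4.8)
The goal is to upgrade Proposition~\ref{SubExpQTLem} from sub-exponential growth to solvable groups of finite virtual cohomological dimension, and the natural strategy is to reduce to the sub-exponential (indeed meta-abelian/nilpotent) situation by induction on the derived length. First I would invoke Proposition~\ref{QIQTProp}: after passing to a finite index subgroup $\dot G$, we get a quasi-isometric embedding of $\dot G$ into $\mathbb{R}^k \times \prod_{i=1}^n T_i$ where each $T_i$ carries a lineal, focal, or general type action of $\dot G$. The first step is to rule out the focal and general type cases: by \cite[Lemma 3.3]{CCMT}, such an action produces a free sub-semigroup in $\dot G$, which forces exponential growth. A solvable group containing a free sub-semigroup is impossible — solvable groups of exponential growth can certainly exist (e.g.\ $BS(1,2)$), so this step needs more care. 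The correct observation is that a \emph{solvable} group cannot contain a non-abelian free \emph{subgroup}, but it \emph{can} contain a free sub-semigroup; so I instead use that finite vcd solvable groups are virtually polycyclic (by a theorem of Bieri, a solvable group of type $FP_\infty$, or more relevantly of finite cohomological dimension, is polycyclic-by-finite — or at least I would cite the appropriate finiteness result), and virtually polycyclic groups that contain no free sub-semigroup are precisely the virtually nilpotent ones (Milnor–Wolf), so a focal or general type action would already contradict virtual nilpotence unless... — actually the cleaner route: a virtually polycyclic group has a free sub-semigroup iff it has exponential growth iff it is not virtually nilpotent, and we do \emph{not} yet know $\dot G$ is virtually nilpotent, so this does not immediately give a contradiction.

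Let me restructure. The honest plan is: (1) Use finite vcd plus solvability to conclude $\dot G$ is \emph{virtually polycyclic}, hence in particular finitely generated and with every subgroup finitely generated; cite Bieri's result that a solvable group of finite cohomological dimension is polycyclic (equivalently, constructible/virtually polycyclic). (2) If $\dot G$ is virtually nilpotent we are done exactly as in Proposition~\ref{SubExpQTLem}, since virtually nilpotent groups have sub-exponential (polynomial) growth. (3) So assume $\dot G$ is \emph{not} virtually nilpotent; by Milnor–Wolf it then has exponential growth, and I must derive a contradiction with property (QT). Here is where the structure of solvable groups enters: a polycyclic group that is not virtually nilpotent contains a subgroup of the form $\mathbb{Z}^d \rtimes_\phi \mathbb{Z}$ where $\phi \in GL(d,\mathbb{Z})$ has an eigenvalue off the unit circle — in particular it contains an exponentially distorted $\mathbb{Z}$ (a copy of $\mathbb{Z}$ inside the $\mathbb{Z}^d$ part that is distorted by the $\mathbb{Z}$-conjugation). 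This directly contradicts Lemma~\ref{QTUndistortedLem}, which says every cyclic subgroup of a group with property (QT) is undistorted, and distortion passes to finite index subgroups.

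So the key steps in order are: first reduce to $\dot G$ virtually polycyclic using the finite-vcd hypothesis (citing Bieri); then split into the virtually-nilpotent case (finish via Proposition~\ref{SubExpQTLem}, noting polynomial growth) and the non-virtually-nilpotent case; in the latter, locate an exponentially distorted cyclic subgroup inside a $\mathbb{Z}^d \rtimes \mathbb{Z}$ subgroup — this uses the classical fact that a polycyclic group fails to be virtually nilpotent precisely when some such semidirect product with a hyperbolic-type monodromy appears, which is essentially the content of the Milnor–Wolf / Mal'cev structure theory — and invoke Lemma~\ref{QTUndistortedLem} together with the commensurability invariance of property (QT) to get the contradiction. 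I expect the \textbf{main obstacle} to be step~(3): making rigorous the claim that a non-virtually-nilpotent polycyclic group contains an exponentially distorted cyclic subgroup. One clean way is to cite \cite[Lemma 5.2]{NS20} or the analogous distortion computation for $BS(1,n) \hookrightarrow$ abelian-by-cyclic groups already used for Corollary~\ref{QTFailforSOLCor}; alternatively one can argue that any polycyclic group of exponential growth has a finite-index subgroup mapping onto $\mathbb{Z}$ with kernel containing a $\phi$-invariant $\mathbb{Z}^d$ on which $\phi$ acts with spectral radius $>1$, and then a generator of a suitable eigendirection-adjacent cyclic subgroup is conjugated by $t^m$ to its $\lambda^m$-th power up to bounded error, giving logarithmic word length — i.e.\ exponential distortion. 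Either route completes the proof; the bookkeeping with finite index subgroups (property (QT) and distortion are both inherited up and down finite index, by commensurability invariance and Lemma~\ref{QTUndistortedLem} respectively) is routine.
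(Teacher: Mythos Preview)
Your restructured plan has a genuine gap at step~(1): the claim that a solvable group of finite (virtual) cohomological dimension is virtually polycyclic is false, and no theorem of Bieri says this. The Baumslag--Solitar group $BS(1,n)=\langle a,t\mid tat^{-1}=a^n\rangle$ for $n>1$ is metabelian, torsion-free, of cohomological dimension $2$, yet not polycyclic (its normal subgroup $\mathbb{Z}[1/n]$ is not finitely generated). Bieri's theorem characterizes solvable groups of finite cohomological dimension as those that are torsion-free of finite Hirsch length, which is much weaker than polycyclic. Since your entire case split in steps~(2)--(3) (virtually nilpotent versus not, via Milnor--Wolf for polycyclic groups) rests on this reduction, the argument does not go through.

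The paper's route is different and avoids this trap. It first observes (as you do implicitly) that Lemma~\ref{QTUndistortedLem} makes $G$ \emph{translation proper} in the sense of Conner \cite{C00}: every non-torsion element has positive translation length. Conner's theorem then gives that a translation-proper solvable group of finite vcd is virtually \emph{meta-abelian} --- not polycyclic, but enough. From there the paper invokes Lemma~\ref{metaAbLinealLem} (any unbounded isometric meta-abelian action on a quasi-tree is lineal), and then, using amenability, quasi-conjugates each lineal action to a genuine isometric action on $\mathbb{R}$ (quasi-morphisms on amenable groups are bounded distance from homomorphisms). This yields a proper action on a Euclidean space, forcing $G$ to be virtually abelian. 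Your distortion instinct is the right starting point --- it feeds into translation properness --- but the structural input you need is Conner's meta-abelian reduction together with Lemma~\ref{metaAbLinealLem}, not a (nonexistent) polycyclic reduction.
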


\begin{proof}
 Passing to finite index subgroups, assume that $G$ is meta-abelian so  any quotient of $G$ is meta-abelian. By Lemma \ref{metaAbLinealLem}, the action   of $G$ on each $T_i$ is lineal.

After possibly passing to an index 2 subgroup,  a lineal action of any amenable group $G$ on a quasi-line $T$ can be quasi-conjugated to be an isometric action on $\mathbb R$. Indeed, by the proof of Lemma \ref{SubExpQTLem}, conjugating the original action by almost isometry gives a quasi-action  of $G$ on $\mathbb R$ so that any orbital map induces a quasi-homomorphism of $G$ to $\mathbb R$.   For  amenable groups, any quasi-homomorphism differs from a homomorphism by a uniform bounded constant. Thus, up to quasi-conjugacy, the lineal action of $G$ on $T$ can be promoted to be an isometric action on $\mathbb R$.

Consequently, we can quasi-conjugate  the action of a solvable group $G$ on a finite product of quasi-trees to a proper action on a Euclidean space.  Thus, $G$ must be virtually abelian.
\end{proof}

\begin{proof}[Proof of Theorem~\ref{thm:subexponential}]
 The proof is a combination of  Proposition~\ref{SubExpQTLem} and Proposition~\ref{SOLQTLem}.
\end{proof}

\subsection{CKA groups}
\label{CKA}
\emph{Admissible groups} firstly introduced in \cite{CK02} are a particular class of graph of groups that includes fundamental groups of $3$--dimensional graph manifolds. In this section, we review  admissible groups and their properties that will used throughout the paper.

Let $\mathcal G$ be a connected graph. We often consider oriented edges from $e_-$ to $e_+$ and denote $e=[e_-,e_+]$. Then $\bar e=[e_+,e_-]$ denotes the oriented edge with reversed orientation. Denote by $\mathcal G^0$ the set of vertices and by $\mathcal G^1$ the set of all oriented edges.

\begin{defn}
\label{defn:admissible}
A graph of groups $\mathcal{G}$ is \emph{admissible} if
\begin{enumerate}
    \item $\mathcal{G}$ is a finite graph with at least one edge.
    \item Each vertex group ${ G}_v$ has center $Z({ G}_v) \cong \Z$, ${ H}_v \colon = { G}_{v} / Z({ G}_v)$ is a non-elementary hyperbolic group, and every edge subgroup ${ G}_{e}$ is isomorphic to $\Z^2$.
    \item Let $e_1$ and $e_2$ be distinct directed edges entering a vertex $v$, and for $i = 1,2$, let $K_i \subset { G}_v$ be the image of the edge homomorphism ${G}_{e_i} \to {G}_v$. Then for every $g \in { G}_v$, $gK_{1}g^{-1}$ is not commensurable with $K_2$, and for every $g \in  G_v \setminus K_i$, $gK_ig^{-1}$ is not commensurable with $K_i$.
    \item For every edge group ${ G}_e$, if $\alpha_i \colon { G}_{e} \to { G}_{v_i}$ is the edge monomorphism, then the subgroup generated by $\alpha_{1}^{-1}(Z({ G}_{v_1}))$ and $\alpha_{2}^{-1}(Z({ G}_{v_1}))$ has finite index in ${ G}_e$.
\end{enumerate}
A group $G$ is \emph{admissible} if it is the fundamental group of an admissible graph of groups.
\end{defn}

\begin{defn}
\label{defn:admissibleaction}
We say that an admissible group $G$ is a {\it Croke-Kleiner admissible group} or {\it CKA group} if it acts properly discontinuous, cocompactly and by isometries on a complete proper CAT(0) space $X$. Such action $G \curvearrowright X$ is called a {\it CKA action} and the space $X$ is called a {\it CKA space}.
\end{defn}

\begin{exmp}
\label{exmp:CKA}
\begin{enumerate}
    \item Let $M$ be a nongeometric graph manifold that admits a nonpositively curved metric. Lift this metric to the universal cover $\tilde{M}$ of $M$, and we denote this metric by $d$. Then the action $\pi_1(M) \curvearrowright (\tilde{M}, d)$ is a CKA action.
    \item Let $T$ be the torus complexes constructed in \cite{CK00}. Then $\pi_1(T) \curvearrowright \tilde{T}$ is a CKA action.
    \item One may build Croke-Kleiner admissible groups algebraically from any finite number of  hyperbolic CAT(0) groups. The following example is for $n =2$ but the same principle works for any $n \ge 2$. Let $H_1$ and $H_2$ be two torsion-free hyperbolic groups that act geometrically on $CAT(0)$ spaces $X_1$ and $X_2$ respectively. Then $G_i = H_i \times \langle t_i \rangle$ (with $i =1,2$)  acts geometrically on the $CAT(0)$ space $Y_ i = X_{i} \times \mathbb R$. Any primitive hyperbolic element $h_i$ in $H_i$ gives rise to a totally geodesic torus $T_i$ in the quotient space $Y_{i}/G_i$ with basis $([h_i], [t_i])$. We re-scale $Y_ i$ so that the translation length of $h_i$ is equal to that of $t_i$ for each $i$.  Let $f \colon T_1 \to T_2$ be a \textit{flip} isometry respecting these lengths, that is, an orientation-reversing isometry mapping $[h_1]$ to $[t_2]$ and $[t_1]$ to $[h_2]$.   Let $M$ be the space obtained by gluing $Y_1$ to $Y_2$ by the isometry $f$. There is a metric on $M$ which makes $M$ into a locally $CAT(0)$ space (see e.g. \cite[Proposition II.11.6]{BH99}). By the Cartan-Hadamard Theorem, the universal cover $\widetilde M$ with the induced length metric from $M$ is a CAT(0) space. Let $G$ be the fundamental group of $M$. The action $G \curvearrowright \widetilde M$ is geometric, and $G$ is an example of a Croke-Kleiner admissible group.  
\end{enumerate}
\end{exmp}

\begin{rem}
All graph 3-manifold groups are admissible, but there are closed graph 3-manifold groups that  are not CAT(0) groups (see \cite{KL96}), and thus are not CKA groups.  The following is another example.  Take two non-virtually split central extensions of hyperbolic groups by $\mathbb Z$ (e.g. $\widetilde{SL(2, \mathbb{R})}$ lattices) and amalgamate them over $\mathbb Z^2$ to get an admissible group. This group cannot act properly on CAT(0) spaces, since  central extensions acting on CAT(0) spaces must  virtually split as direct products (\cite[Thm. II.7.1]{BH99}).    
\end{rem}

A collection of subgroup $\{K_1,\cdots, K_n\}$ in a group $H$ is called \textit{almost malnormal} if $\sharp (gK_ig^{-1}\cap K_j)=\infty$ implies   $i=j$ and $g\in K_i$. It is well-known that a hyperbolic group is hyperbolic relative to any almost malnormal collection of quasi-convex subgroups (\cite{Bow12}).
\begin{lem}\label{MalnormalEdgegroupsLem}
Let $K_e$ be the image  of an edge  group $G_e$ into $G_v$ and $\bar K_e$ be its projection in $H_v$ under $G_v\to H_v=G_v/Z(G_v)$. Then $\mathbb P:=\{\bar K_e: e_-=v, e\in \mathcal G^1\}$ is an almost malnormal collection of virtually cyclic subgroups in $H_v$.

In particular, $H_v$ is hyperbolic relative to $\mathbb P$.
\end{lem}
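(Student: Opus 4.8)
The plan is to verify the two defining properties of an almost malnormal collection directly from condition (3) of Definition~\ref{defn:admissible}, then invoke Bowditch's theorem to deduce relative hyperbolicity. First I would observe that each $K_e \subset G_v$, being the image of $G_e \cong \Z^2$, contains $Z(G_v) \cong \Z$ with finite index quotient inside $H_v$ by condition (4); hence its projection $\bar K_e = K_e/(K_e \cap Z(G_v))$ is virtually cyclic (indeed infinite, since $K_e$ has rank $2$ and meets the center in rank at most $1$). This settles the ``virtually cyclic'' part of the statement. I would also record the standard fact that each $\bar K_e$ is quasi-convex in $H_v$: it is virtually cyclic, hence undistorted, and undistorted virtually cyclic subgroups of a hyperbolic group are quasi-convex.

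Next I would prove almost malnormality. Suppose $\sharp(g\bar K_e g^{-1} \cap \bar K_{e'}) = \infty$ for some $g \in H_v$; lift $g$ to $\tilde g \in G_v$. Pulling back along $G_v \to H_v$, the infinite intersection in $H_v$ forces $\tilde g K_e \tilde g^{-1} \cap K_{e'}$ to have infinite image in $H_v$, so in particular this intersection is infinite in $G_v$ and is not contained in (a finite-index subgroup of) $Z(G_v)$. Since $K_e, K_{e'}$ are $\Z^2$'s and $\tilde g K_e \tilde g^{-1} \cap K_{e'}$ is an infinite subgroup whose image in $H_v$ is infinite, the commensurator considerations apply: two $\Z^2$ subgroups of $G_v$ with an infinite intersection that is not virtually central must in fact be commensurable (an infinite subgroup of $\Z^2$ either has finite index or is infinite cyclic; if infinite cyclic its image in $H_v$ could still be infinite, but then it projects to a finite-index subgroup of both $\bar K_e$ and $\bar K_{e'}$, which is exactly commensurability of the projections — and I would argue this already pulls back to commensurability of $K_e$ and $K_{e'}$ modulo the shared center, contradicting (3)). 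Concretely: if $e \neq e'$ and both enter $v$, condition (3) says $\tilde g K_e \tilde g^{-1}$ is not commensurable with $K_{e'}$ for any $\tilde g$, a contradiction; if $e = e'$ but $\tilde g \notin K_e$, condition (3) again gives non-commensurability, a contradiction. Hence $e = e'$ and $\tilde g \in K_e$, i.e.\ $g \in \bar K_e$, which is what almost malnormality requires.

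Finally, having shown $\mathbb{P} = \{\bar K_e : e_- = v\}$ is a finite, almost malnormal collection of infinite quasi-convex (virtually cyclic) subgroups of the non-elementary hyperbolic group $H_v$, I would cite the cited result of Bowditch \cite{Bow12} (quoted in the excerpt just before the lemma) that a hyperbolic group is hyperbolic relative to any almost malnormal collection of quasi-convex subgroups, yielding the ``in particular'' clause.

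\textbf{Main obstacle.} The delicate point is the passage between ``infinite intersection of the projections in $H_v$'' and ``commensurability of the edge groups $K_e$ in $G_v$'' — one must carefully track how infinite cyclic intersections in $G_v$ interact with the central $\Z$ and rule out the degenerate case where the intersection is exactly (virtually) the center, which is precisely where condition~(4) is needed to guarantee $Z(G_v)$ has infinite image... no, rather that $\bar K_e$ is \emph{infinite} so that a genuinely infinite intersection in $H_v$ forces a non-central infinite intersection upstairs. Making this bookkeeping airtight, rather than the relative hyperbolicity conclusion itself, is the real content.
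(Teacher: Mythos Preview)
Your overall strategy matches the paper's: verify that each $\bar K_e$ is virtually cyclic, deduce almost malnormality from condition~(3), then cite Bowditch. However, your execution of the central step is both less clean than the paper's and contains a false claim.

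The assertion ``two $\Z^2$ subgroups of $G_v$ with an infinite intersection that is not virtually central must in fact be commensurable'' is not true in a general group (take two coordinate copies of $\Z^2$ inside $\Z^3$). You seem to sense this and retreat to the parenthetical argument via commensurability of the projections $\bar K_e$, $\bar K_{e'}$, but the phrase ``pulls back to commensurability of $K_e$ and $K_{e'}$ modulo the shared center'' is not a proof. It \emph{can} be made rigorous: since $K_e\cap Z(G_v)$ has finite index in $Z(G_v)$ (your use of condition~(4)), $K_e$ has finite index in its full preimage $K_e\cdot Z(G_v)$, and likewise for $\tilde g K_{e'}\tilde g^{-1}$; commensurability of the images in $H_v$ then yields commensurability of these preimages and hence of $K_e$ and $\tilde g K_{e'}\tilde g^{-1}$. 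You correctly flag this bookkeeping as the main obstacle, but you do not actually carry it out.

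The paper bypasses this detour with a single observation you missed: since $Z(G_v)$ is \emph{central}, the conjugate $gK_{e'}g^{-1}$ still contains (a finite-index subgroup of) $Z(G_v)$, so the intersection $K_e\cap gK_{e'}g^{-1}$ already contains a rank-$1$ central piece. Combined with an element of infinite image in $H_v$, the intersection has rank~$2$ inside $\Z^2$, hence finite index in both --- commensurability is immediate, and condition~(3) finishes. This is shorter and avoids the need to pull anything back.
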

\begin{proof}
Since $Z(G_v)\subset K_e \cong \mathbb Z^2$, we have $\bar K_e=K_e/Z(G_v)$ is virtually cyclic. The almost malnormality follows from non-commensurability of $K_e$ in $G_v$. Indeed,  assume that $\bar K_e\cap h\bar K_{e'}h^{-1}$ contains an infinite order element by the hyperbolicity of $H_v$. If  $g\in G_v$ is sent to $h$, then $K_e\cap gK_{e'}g^{-1}$ is sent to $\bar K_e\cap h\bar K_{e'}h^{-1}$.    Thus, $K_e\cap gK_{e'}g^{-1}$ contains an abelian group of rank 2. The non-commensurability of $K_e$ in $G_v$ implies that  $e=e'$ and $g\in K_e$. This shows that $\mathbb P$ is almost malnormal.
\end{proof}

Let $G \curvearrowright X$ be a CKA action where $G$ is the fundamental group of an admissible graph of groups $\mathcal{G}$, and let $G \curvearrowright T$ be the action of $G$ on the associated Bass-Serre tree $T$ of $\mathcal{G}$ (we refer the reader to Section~2.5 in \cite{CK02} for a brief discussion). Let $T^0$ and $T^1$ be the vertex and edge sets of $T$.  By CAT(0) geometry,
\begin{enumerate}
    \item
    for every vertex $v\in T^0,$ the minimal set $Y_{v} :=  \cap_{g \in Z(G_v)} Minset(g)$ of $X$ splits as metric product $\bar Y_v\times \mathbb R$ where $Z(G_v)$ acts by translation on the $\mathbb R$--factor and  $H_v=G_v/Z(G_v)$ acts geometrically on the Hadamard space $\bar Y_v$. Since $H_v$ is a hyperbolic group, it follows that $\bar{Y}_v$ is a hyperbolic space.
    \item for every edge $e \in T^1$, the minimal set $Y_{e} :=  \cap_{g \in G_e} Minset(g)$ of $X$ splits as $\bar Y_e \times \mathbb R^2\subset Y_v$ where $\bar Y_e$ is a compact Hadamard space and $G_e=\mathbb Z^2$ acts cocompactly on the Euclidean plane $\mathbb R^2$.

\end{enumerate}

We note that the assignments $v \to Y_v$ and $e \to Y_e$ are $G$--equivariant with respect to the natural $G$ actions.


We summarize results in Section~3.2 of \cite{CK02} that will be used in this paper.
\begin{lem}
\label{defn:vertexedgespace}
Let $G \curvearrowright X$ be a CKA action. Then there exists a constant $D >0$ such that 
\begin{enumerate}
    \item $X=\cup_{v \in T^0} {N}_{D}(Y_v) = \cup_{e \in T^1} {N}_{D}(Y_e)$. 
    \item If $\sigma, \sigma' \in T^0 \cup T^1$ and $N_D(Y_{\sigma}) \cap N_D(Y_{\sigma'}) \neq \emptyset$ then $|\sigma- \sigma'|_T < D$.
\end{enumerate}
\end{lem}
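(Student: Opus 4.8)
The final statement to prove is Lemma~\ref{defn:vertexedgespace}, which collects the structural facts from Croke--Kleiner \cite{CK02} about how the translates of the vertex/edge minsets cover $X$ and interact combinatorially.

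\medskip

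\textbf{Proof plan.} The plan is to exploit cocompactness of the CKA action together with the $G$-equivariance of the assignments $v\mapsto Y_v$ and $e\mapsto Y_e$. First I would fix a compact fundamental domain $K\subset X$ for the action $G\curvearrowright X$; such a $K$ exists since the action is geometric. Since $T$ is the Bass--Serre tree, there are only finitely many $G$-orbits of vertices and of edges in $T$, say represented by $v_1,\dots,v_p\in T^0$ and $e_1,\dots,e_q\in T^1$. For each such representative $\sigma$, the stabilizer $G_\sigma$ acts cocompactly on $Y_\sigma$ (this is the content of the metric splitting recalled just before the lemma: $H_{v}=G_v/Z(G_v)$ acts geometrically on $\bar Y_v$ and $Z(G_v)$ acts cocompactly on the $\mathbb R$-factor, so $G_v$ acts cocompactly on $Y_v=\bar Y_v\times\mathbb R$; similarly $G_e=\mathbb Z^2$ acts cocompactly on $\bar Y_e\times\mathbb R^2$ with $\bar Y_e$ compact). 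I would then choose $D_0>0$ large enough that $K$ meets $N_{D_0}(Y_{v_i})$ for every $i$ and $N_{D_0}(Y_{e_j})$ for every $j$, equivalently large enough that every point of $X$ lies within $D_0$ of some $G$-translate of one of the finitely many chosen $Y_{v_i}$ (using that $GK=X$ and that each $Y_{v_i}$ has cobounded $G_{v_i}$-action, so $G\cdot Y_{v_i}$ is $D_0$-dense in $\bigcup$ of the relevant translates). Equivariance then gives $X=\bigcup_{v\in T^0}N_{D}(Y_v)=\bigcup_{e\in T^1}N_{D}(Y_e)$ for this $D=D_0$; this proves part~(1), and we set $X_v:=N_D(Y_v)$, $X_e:=N_D(Y_e)$.

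\medskip

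For part~(2) I would argue by contradiction using the fact that distinct vertex (or edge) spaces in the tree are ``far apart'' in $X$ once their combinatorial distance in $T$ is large. Concretely, suppose $\sigma,\sigma'\in T^0\cup T^1$ with $X_\sigma\cap X_{\sigma'}\neq\emptyset$ but $d_T(\sigma,\sigma')\geq D$. Pick a point $x\in X_\sigma\cap X_{\sigma'}$; then $x$ is within $D$ of both $Y_\sigma$ and $Y_{\sigma'}$, so there exist $y\in Y_\sigma$, $y'\in Y_{\sigma'}$ with $d(y,y')\leq 2D$. The key geometric input from \cite[Section~3.2]{CK02}: along the geodesic edge-path $\sigma=\sigma_0,\sigma_1,\dots,\sigma_m=\sigma'$ in $T$ (with $m=d_T(\sigma,\sigma')$), the consecutive minsets $Y_{\sigma_i}$ and $Y_{\sigma_{i+1}}$ overlap (or are nested, via $Y_e\subset Y_v$ when $v$ is an endpoint of $e$), and crossing each edge of $T$ forces the geodesic in $X$ from $y$ to $y'$ to traverse a definite amount of ``depth'' — more precisely, a geodesic in $X$ joining $Y_\sigma$ to $Y_{\sigma'}$ must pass through each intermediate $Y_{\sigma_i}$ within bounded error, and the flat strips / product structure force its length to grow at least linearly in $m$. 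Hence there is a constant $c>0$ (depending only on the action) with $d(Y_\sigma,Y_{\sigma'})\geq c\cdot d_T(\sigma,\sigma')-c$. Choosing $D$ at the outset also larger than $(2D+c)/c$ — a finite bootstrap, since the constant $c$ is fixed — yields $2D\geq d(y,y')\geq d(Y_\sigma,Y_{\sigma'})\geq c\,d_T(\sigma,\sigma')-c\geq cD-c$, which is a contradiction for $D$ large. Therefore $d_T(\sigma,\sigma')<D$.

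\medskip

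\textbf{Main obstacle.} The routine part is part~(1): it is a direct cocompactness argument. The real work is the linear lower bound $d(Y_\sigma,Y_{\sigma'})\gtrsim d_T(\sigma,\sigma')$ used in part~(2). This is exactly where one must invoke the CAT(0) geometry of the CKA action as developed in \cite{CK02}: the non-commensurability condition~(3) in Definition~\ref{defn:admissible} guarantees that distinct edge-subgroups entering a vertex have ``transverse'' axes, so that the corresponding flats $\mathbb R^2\subset Y_v$ are not parallel, and consequently moving from one edge space to another across a vertex space costs a uniformly positive distance; iterating along the path in $T$ gives the linear growth. I would cite \cite[Section~3.2, especially Lemmas~3.3--3.7]{CK02} for this estimate rather than reprove it, since the statement of the present lemma explicitly says it is a summary of those results. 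The only subtlety to be careful about is the uniformity of all constants: they must depend only on the CKA action $G\curvearrowright X$ (equivalently, only on finitely many orbit representatives and the fundamental domain), which is again guaranteed by cocompactness and finiteness of the number of orbits of vertices and edges in $T$.
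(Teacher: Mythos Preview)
Your proposal is correct and matches the paper's treatment: the paper does not prove this lemma at all but simply states it as a summary of \cite[Section~3.2]{CK02}, and you likewise defer the key linear lower bound $d(Y_\sigma,Y_{\sigma'})\gtrsim d_T(\sigma,\sigma')$ to that reference. Your sketch of part~(1) via cocompactness and of part~(2) via the transversality of edge flats coming from condition~(3) of Definition~\ref{defn:admissible} is exactly the content of the cited Croke--Kleiner lemmas, so there is nothing to add.
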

We shall refer $\tilde Y_v=N_D(Y_v)$ and $\tilde Y_e=N_D(Y_e)$  to as vertex and edge spaces for $X$. 

\subsubsection{Strips in CKA spaces}
(Section~4.2 in \cite{CK02})
\label{subsec:strip}
 We first choose, in a $G$--equivariant way, a plane  $F_e \subset Y_e$ (which we will call \textit{boundary plane}) for each edge $e \in T^1$.  For every pair of adjacent edges $e_1$, $e_2$, we choose, again equivariantly, a minimal geodesic from $F_{e_1}$ to $F_{e_2}$; by the convexity of $Y_v = \bar{Y}_{v} \times \R$ where $v:= e_{1} \cap e_{2}$, this geodesic determines a Euclidean strip $\mathcal{S}_{e_1e_2} : = \gamma_{e_1e_2} \times \R$ (possibly of width zero) for some geodesic segment $\gamma_{e_1 e_2} \subset \bar{Y}_v$. 

 Note that $\mathcal{S}_{e_1e_2} \cap F_{e_i}$ is an axis of $Z(G_v)$. Hence if $e_1, e_2, e \in E$, $e_{i} \cap e = v_{i} \in V$ are distinct vertices, then the angle between the geodesics $\mathcal{S}_{e_1e} \cap F_{e}$ and $\mathcal{S}_{e_2e} \cap F_{e}$ is bounded away from zero.
If $\langle f_1\rangle=Z(G_{v_1}), \langle f_2\rangle=Z(G_{v_2})$ then $\langle f_1, f_2\rangle$ generates a finite index subgroup of $G_e$. We remark that the intersection of two strips $\mathcal{S}_{e_1e}$ and $\mathcal{S}_{e_2e}$ is a point. Indeed, we have $\mathcal{S}_{e_1e} \cap \mathcal{S}_{e_2e} = (\mathcal{S}_{e_1e} \cap F_{e}) \cap (\mathcal{S}_{e_2e} \cap F_e)$. As two lines $\mathcal{S}_{e_1e} \cap F_{e}$ and $\mathcal{S}_{e_2e} \cap F_e$ in the plane $F_e$ are axes of $\langle f_{v_1} \rangle = Z(G_{v_1})$, $\langle f_{v_1} \rangle = Z(G_{v_2})$ respectively and $\langle f_1, f_2\rangle$ generates a finite index subgroup of $G_e$, it follows that these two lines are non-parallel, and hence their intersection must be a point.

 We note that the intersection of a boundary plane $F_e$ of $Y_v$ with the hyperbolic space $\bar{Y}_v$ is a line. The {\textit{boundary lines}} $\mathbb{L}_v$ of the hyperbolic space $\bar{Y}_v$ are the following collection of lines:
$
\mathbb{L}_{v} = \{ \ell_e := F_{e} \cap \bar{Y}_v \, | \, e_{-} = v \}
$.
\begin{defn}
\label{defn:flip}
If for each edge $e:=[v,w]\in T$,  the   boundary line  $\ell=\bar Y_v \cap F_e$ is parallel  to the $\R$--line in $Y_w = \bar{Y}_w \times \R$, then the CKA action is called \textit{flip}.
\end{defn}

In the sequel, it will be   useful to choose.
\begin{defn}
\label{rem:indexfunction}
An {\it indexed map} $\rho \colon X \to T^0$ is a $G$--equivariant coarsely Lipschitz map    such that  $x \in X_{\rho(x)}$ for all $x \in X$.

 \end{defn}
If $G$ acts freely on $X$, such a map $\rho$ can be constructed as follows.  Choose a fundamental set $\Sigma$  so that $\Sigma$ contains exactly one point from each orbit. Define $\rho: \Sigma\to T^0$ so that  $\rho(x)=X_{\rho(x)}$, and extend equivariantly $\rho$ to the whole space $X$. By Lemma \ref{defn:vertexedgespace}.(2), one can show that $\rho$ is a coarsely Lipschitz map: $|\rho(x)-\rho(y)|_T\le L |x-y|_X+L$ for some $L>0$. See     \cite[Section~3.3]{CK02} for more details. 

If $G$ acts only geometrically on $X$, we could replace $X$ with a $G$-orbit $Go$ for a basepoint $o$ with trivial stabilizer. This does not matter much as we are only interested in the coarse geometry  hereafter.  By modifying $X$, we could always assume such a basepoint $o$ exists. Indeed, attach a Euclidean cone to a point $o$ so that its nontrivial but finite stabilizer  acts freely on its boundary circle. We do the modification equivariantly for all translates in $Go$.


\subsubsection{Special paths in CKA spaces}
\label{subsection:specialpath}
Let $G \curvearrowright X$ be a CKA action.
We now introduce the class of \emph{special paths}  in $X$. 

\begin{defn}[Special paths in $X$]
\label{SpecialPathDefn}
Let $\rho \colon X \to T^0$ be the indexed map
given by Definition~\ref{rem:indexfunction}.
Let $x$ and $y$ be two   points in $X$. If $\rho(x) = \rho(y)$, a \emph{special path} in $X$ connecting $x$ to $y$ is the geodesic $[x,y]$. Otherwise, let $e_{1} \cdots e_{n}$ be the geodesic edge path connecting  $\rho(x)$ to $\rho(y)$ and let  $p_i=\mathcal S_{e_{i-1}e_i}\cap \mathcal S_{e_{i}e_{i+1}}$ be the intersection point of adjacent strips, where $e_{0}:=x$ and $e_{n+1}:=y$.  A \textit{special path} connecting $x$ to $y$ is the concatenation of the geodesics $$[x, p_{1}][p_{1}, p_{2}]\cdots [p_{{n-1}}, p_{n}][p_{n}, y]$$
\end{defn}

\begin{rem}
\label{rem:independent}
By definition, the special path except the $[x, p_{1}]$ and $[p_{n}, y]$ depends only on the geodesic $e_{1} \cdots e_{n}$ in $T$, the choice of planes $F_e$ and the indexed map $\rho$.
\end{rem}

\begin{figure}[htb] 
\centering \scalebox{0.6}{
\includegraphics{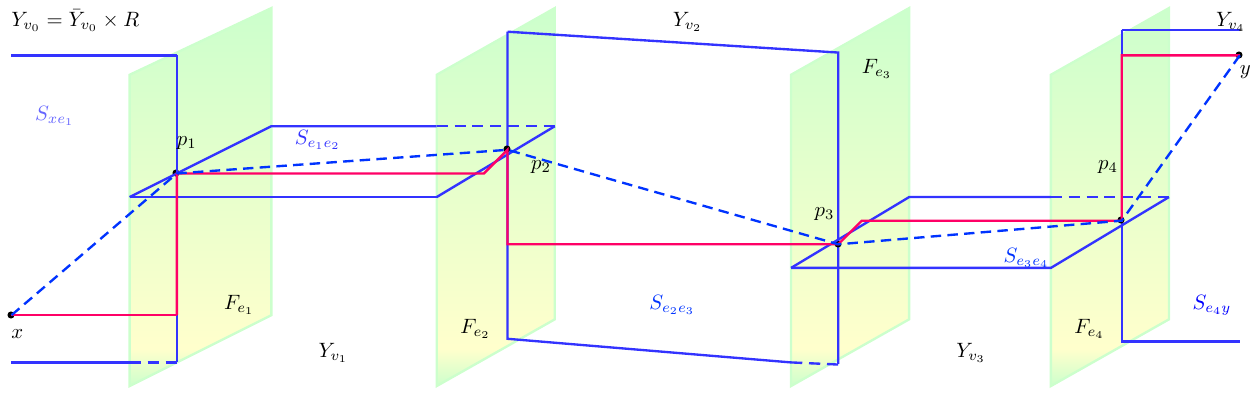} 
} \caption{The dotted and blue path from $x$ to $y$ is a special path, and the red path is one $L^1$-version of it.} \label{figure1}
\end{figure}

\begin{prop}\cite[Prop. 3.8]{NY20}
 \label{prop:spepathisqg}
There exists a constant $\mu >0$ such that every special path $\gamma$ in $X$ is a $(\mu, \mu)$--quasi-geodesic.
\end{prop}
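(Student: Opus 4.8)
The plan is to show that a special path $\gamma = [x,p_1][p_1,p_2]\cdots[p_n,y]$ is a local quasi-geodesic with uniform constants, and then invoke a local-to-global principle for quasi-geodesics in the CAT(0) space $X$ (or rather in the relevant hyperbolic pieces governing the combinatorics of the Bass–Serre tree). First I would observe that each individual segment $[p_{i-1},p_i]$ lives inside a single vertex space $Y_{v_i} = \bar Y_{v_i}\times\R$, and that consecutive segments meet along a boundary plane $F_{e_i}$, more precisely along the axis $\mathcal S_{e_{i-1}e_i}\cap F_{e_i}$, which is an axis of the central $\Z$. The key geometric input is the last displayed fact before the statement: the angle at $p_i$ between the two axes $\mathcal S_{e_{i-1}e_i}\cap F_{e_i}$ and $\mathcal S_{e_ie_{i+1}}\cap F_{e_i}$ is bounded away from zero (uniformly, using $G$-equivariance and cocompactness). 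This angle bound forces the concatenation to turn ``definitely'' at each breakpoint $p_i$, so no cancellation can occur there.

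Next I would control what happens over a stretch of several consecutive vertices: project $\gamma$ to the Bass–Serre tree $T$ via the indexed map $\rho$ from Remark~\ref{rem:indexfunction}. Because $[x,p_1][p_1,p_2]\cdots$ is built along the \emph{geodesic} edge path $e_1\cdots e_n$ in $T$, the $\rho$-image of $\gamma$ progresses monotonically along this geodesic, and by Lemma~\ref{defn:vertexedgespace}(2) any two points of $\gamma$ whose vertex spaces are far apart in $T$ are themselves far apart in $X$ (the diameter of $Y_e$ in the non-fiber directions is uniformly bounded, and the strips $\mathcal S_{e_{i-1}e_i}$ have uniformly bounded ``horizontal'' size by cocompactness). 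So length accumulated in the direction transverse to the fibers is tracked faithfully by the tree distance. The remaining danger is length accumulated \emph{along} fiber directions: a special path could in principle run a long way up one $\R$-fiber and then a long way back down in an adjacent piece. This is precisely where the positive-angle condition at the $p_i$ is used again, together with convexity of the CAT(0) metric: turning by a definite angle between two lines means that backtracking along the second line after traveling distance $t$ along the first costs a definite fraction of $t$ in displacement, so a ``long fiber excursion followed by its reversal'' is impossible without the endpoints being correspondingly far apart.

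Assembling these, one shows there are constants $\mu_0,\epsilon_0>0$ so that the restriction of $\gamma$ to any subpath of length $\le \epsilon_0$ (in fact any subpath meeting at most boundedly many vertex spaces) is a $(\mu_0,\mu_0)$-quasi-geodesic; then the Morse/stability lemma for local quasi-geodesics — available because the coarse structure transverse to fibers is a tree and each piece is hyperbolic after coning the fibers, or more directly because $X$ is CAT(0) and special paths have bounded ``reversal'' — upgrades this to a global $(\mu,\mu)$-quasi-geodesic bound with $\mu$ depending only on the CKA data. The main obstacle, as indicated, is the fiber-direction bookkeeping: unlike in the flip case of \cite{NY20}, the fiber line of $Y_{v_i}$ need not align with the boundary line of $Y_{v_{i+1}}$, so one cannot simply concatenate ``horizontal'' and ``vertical'' moves independently; the positive-angle lemma at the strip intersections is the one quantitative ingredient that must be leveraged carefully to rule out near-cancellation of fiber progress across a JSJ plane. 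Since this is exactly Proposition~3.8 of \cite{NY20}, I would expect the cited proof to proceed along these lines, with the angle estimate and Lemma~\ref{defn:vertexedgespace} doing the heavy lifting.
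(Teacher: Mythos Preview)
The paper does not supply a proof of this proposition at all; it is quoted verbatim from \cite{NY20} (as Proposition~3.8 there) and used as a black box. So there is nothing in the present paper to compare your argument against, and you yourself note this in your last sentence.

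That said, your sketch has a genuine gap at the ``local-to-global'' step. CAT(0) spaces do \emph{not} enjoy a local-to-global principle for quasi-geodesics: a logarithmic spiral in $\R^2$ is a local quasi-geodesic on every scale but is certainly not a global one. Your attempt to rescue this by saying ``the coarse structure transverse to fibers is a tree and each piece is hyperbolic after coning the fibers'' is pointing in the right direction, but it hides exactly the part that needs work: the fiber directions assemble into Euclidean planes $F_e$ across which the angle bound only controls two consecutive segments, and you have given no mechanism preventing slow cumulative drift over many pieces. The positive-angle condition at a single $p_i$ does not by itself bound the global defect --- in $\R^2$ one can turn by a fixed positive angle at each step and still spiral. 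What is actually needed (and what the Croke--Kleiner framework provides via the \emph{template} construction implicit in the strip discussion just above the statement) is a model space, built by gluing the strips $\mathcal S_{e_ie_{i+1}}$ and half-planes along the pattern of $e_1\cdots e_n$, together with a proof that this template maps quasi-isometrically into $X$; the special path is then manifestly a geodesic in the template. Your outline never builds this global comparison object, and without it the argument does not close.
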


Assume that $v_0=\rho(x),v_{2n}=\rho(y)\in\mathcal V$ so that $d(v_0, v_{2n})=2n$ for $n\ge 0$.
If $\gamma$ is a special path between $x$ and $y$, we then define 
\begin{equation}\label{hvdistancedefn}
\bigl|x-y\bigr|_X^{\hor} :=\sum_{i=0}^{2n} \bigl|p_{i}- p_{i+1}\bigr|_{Y_{v_i}}^{\hor}, \quad  \bigl|x-y\bigr|_X^{\ver} :=\sum_{i=0}^{2n} \bigl|p_{i}- p_{i+1}\bigr|_{Y_v}^{\ver}
\end{equation}
{where $p_0: = x$ and $p_{n+1}:= y$.} By Proposition \ref{prop:spepathisqg},  we have
$$
|x-y|_X\sim \bigl|x-y\bigr|_X^{\hor} +\bigl|x-y\bigr|_X^{\ver}.
$$

By definition, the system of special paths is $G$-invariant, so  the symmetric functions $d^h(x,y)$ and $d^v(x,y)$ are $G$-invariant for any $x,y\in X$.

We partition the vertex set $T^0$ of the Bass-Serre tree into two {disjoint} classes of vertices $\mathcal V_1$ and $\mathcal V_2$ such that   if $v$ and $v'$ are in $\mathcal{V}_i$ then $d_{T}(v,v')$ is even.

\begin{lem}\cite[Lemma 4.6]{NY20}
\label{lem:index2subgroup}
There exists a subgroup $\dot G$ of   index at most 2 in $G$     preserving  $\mathcal V_i$  for $i=1,2$ so that    $G_v\subset \dot G$ for any $v\in T^0$.
\end{lem}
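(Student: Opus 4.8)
The plan is to realize $\dot{G}$ as the kernel of a homomorphism from $G$ to a finite group detecting the partition type of the fixed vertex or acted-upon edge. First I would recall that $G$ acts on the Bass--Serre tree $T$ without inversions (after the standard barycentric subdivision if necessary, though in the admissible setting edge stabilizers and vertex stabilizers are genuinely different, so no subdivision is needed), and that the partition $T^0 = \mathcal{V}_1 \sqcup \mathcal{V}_2$ into the two classes is a bipartition: every edge of $T$ joins a vertex of $\mathcal{V}_1$ to a vertex of $\mathcal{V}_2$, because $d_T(v,v')$ even for $v,v'$ in the same class forces adjacent vertices into opposite classes. The group $G$ permutes the two classes, since $g$ sends an edge to an edge; thus there is a homomorphism $\phi \colon G \to \Z/2$, where $\phi(g) = 0$ if $g$ preserves $\mathcal{V}_1$ and $\mathcal{V}_2$ setwise, and $\phi(g) = 1$ if $g$ swaps them. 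Set $\dot{G} := \ker\phi$, a subgroup of index at most $2$ in $G$.

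The main points to verify are then: (i) $\dot{G}$ preserves each $\mathcal{V}_i$, which is immediate from the definition of $\phi$; and (ii) every vertex group $G_v$ is contained in $\dot{G}$. For (ii), note that $G_v = \Stab_G(v)$ fixes the vertex $v$, hence it cannot swap the class containing $v$ with the other class — an element swapping $\mathcal{V}_1$ and $\mathcal{V}_2$ would move $v$ to a vertex in the opposite class, in particular would not fix $v$. Therefore $\phi(G_v) = 0$, i.e.\ $G_v \subset \dot{G}$, for every $v \in T^0$. Finally $\dot{G}$ has finite index in $G$ since $[G : \dot{G}] \le 2$.

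I do not expect a genuine obstacle here; the only small subtlety is confirming that $T$ is bipartite with the two parts being exactly $\mathcal{V}_1$ and $\mathcal{V}_2$ (equivalently, that $T$ has no odd cycles, which holds because $T$ is a tree) and that the $G$-action on the set of two parts is well defined — i.e.\ that $g$ genuinely either preserves both parts or swaps them, with no ``mixing''. This follows because $g$ is a graph automorphism of the connected bipartite graph $T$, so it either preserves the bipartition or reverses it. If one prefers to avoid even the index-$2$ ambiguity being nontrivial, one can observe that when $G$ already preserves $\mathcal{V}_1$ (for instance if $T$ has a $G$-invariant orientation or if some vertex group forces it) one simply takes $\dot G = G$; in general the index-$2$ subgroup suffices for all later applications.
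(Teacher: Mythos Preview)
Your argument is correct and is precisely the standard one: the bipartite structure of the tree gives a homomorphism $G\to\Z/2$ whose kernel has index at most $2$ and contains every vertex stabilizer. The paper does not actually supply its own proof here but simply cites \cite[Lemma~4.6]{NY20}; that reference gives essentially the same index-$2$ argument you wrote (and indeed the paper later refers to $\dot G$ as ``the subgroup of index at most $2$ preserving $\mathcal V_1$ and $\mathcal V_2$''), so there is nothing to add.
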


\subsection{Projection axioms}
\label{sub:projectionaxioms}
In this subsection, we  briefly recall  the work of Bestvina-Bromberg-Fujiwara  \cite{BBF} on constructing  a quasi-tree of spaces.

\begin{defn}[Projection axioms]
\label{defn:projaxioms}
Let $\mathbb{Y}$ be a collection of geodesic spaces equipped with projection maps  $$\{\pi_{Y}:  \mathbb Y\setminus \{Y\}\to Y\}_{Y\in \mathbb Y}.$$  Denote $d_{Y}(X,Z) = diam (\pi_{Y}(X) \cup \pi_{Y}(Z))$ for $X\ne Y\ne Z \in \mathbb{Y}$.  The pair $(\mathbb{Y}, \{\pi_{Y}\}_{Y\in \mathbb Y})$ satisfies {\it projection axioms} for a {\it projection constant} $\xi \ge 0$ if
 \begin{enumerate}
     \item
     \label{axiom1}  $diam (\pi_{Y}(X)) \le \xi$ when $X \neq Y$.
     \item
     \label{axiom2} if $X,Y,Z$ are distinct and $d_{Y}(X, Z) > \xi$ then $d_{X}(Y,Z) \le \xi$.
     \item
     \label{axiom3} for $X \neq Z$, the set $\{ Y \in \mathbb{Y} \,:\, d_{Y}(X, Z) > \xi \}$ is finite.
\end{enumerate}
 \end{defn}

 The following is a useful example to keep in mind throughout the paper.  For further details, we refer the reader to the introduction of \cite{BBF}.  In this example, the collection of metric spaces $\mathbb Y$ consists of subspaces of a singe metric space; however, we emphasize that this need not be the case in general. 
\begin{exmp}\label{ex:linesinhypplane}
Let $G$ be a discrete group of isometries of $\mathbb H^2$, and $\gamma\in G$ a loxodromic element with axis $\gamma$.  Let $\mathbb Y$ be the set of all $G$--translates of $\gamma$.  Given $Y\in \mathbb Y$, let $\pi_Y$ denote the closest point projection map in $\mathbb H^2$.  Since all translates of $\gamma$ are convex, this is a well-defined $1$--Lipschitz map.  One may check that $(\mathbb Y,\pi_Y)$ satisfies the projection axioms for some constant $\xi$.
\end{exmp}

\begin{rem}
\label{rem:proj}
Let $(\mathbb{Y}, \{\pi_{Y}\}_{Y\in \mathbb Y})$ satisfy  projection axioms. By \cite[Thm~4.1 and Lem~4.13]{BBFS}, there exists a variant $\pi_{Y}'$ of $\pi_{Y}$   so that $\pi_{Y}$ and $\pi'_{Y}$ are uniformly close in Hausdorff distance, and $(\mathbb{Y}, \{\pi_{Y}'\}_{Y\in\mathbb Y})$ satisfies strong projection axioms, i.e, axioms are the same as projection axioms execpt for replacing (\ref{axiom2}) in Definition~\ref{defn:projaxioms} with the following stronger statement:
if $X,Y,Z$ are distinct and $d_{Y}(X, Z) > \xi$ then $\pi_{X}(Y) = \pi_{X}(Z)$
 for a projection constant $\xi'$ depending only on $\xi$.
\end{rem}

The following results from \cite{BBF} will be used in this paper.
\begin{itemize}
    \item Fix $K>0$. In \cite{BBF}, a quasi-tree of spaces $\mathcal C_K(\mathbb Y) $ is constructed for given $(\mathbb{Y}, \{\pi_{Y}\}_{Y\in \mathbb Y})$ satisfying projection axioms with constant $\xi$. 

    \item If $K>4\xi$ and $\mathbb Y$ is a collection of uniform quasi-lines, then $\mathcal C_K(\mathbb Y)$ is a unbounded quasi-tree. If $\mathbb Y$ admits a group action of $G$ so that $\pi_{gY}=g\pi_Y$ for any $g\in G$ and $Y\in \mathbb Y$, then $G$ acts by isometry on $\mathcal C_K(\mathbb Y)$.
\end{itemize}

Set $[t]_K=t$   if $t\ge K$, otherwise $[t]_K=0$.
  Let  $x\in X, z\in  Z\in \mathbb Y$. If $X\ne Y\ne Z$ define  $d_Y(x, z)=d_Y(X, Z)$. If $Y=X,Y\ne Z$, define $d_Y(x, z)=diam(\pi_Y(x, Z))$.  If $X=Y=Z$, let $d_Y(x, z)$ be the distance in $Y$. The following distance formula  from \cite{BBF}  is crucial in what follows.

\begin{prop} \cite[Proposition~2.4]{BBF2}
\label{BBFDistanceProp}Let $(\mathbb Y, \{\pi_{Y}\}_{Y\in \mathbb Y})$ satisfy the strong projection axioms with constant $\xi$. {Then for any $x, y\in \mathcal C_K(\mathbb Y)$,}
$$
\frac{1}{4} \sum_{Y\in \mathbb Y}
[d_Y (x, y)]_K\le  \bigl|x-y\bigr|_{\mathcal C_K(\mathbb Y)} \le  2 \sum_{Y\in \mathbb Y} [d_Y (x, y)]_K + 3K
$$
for all $K \ge 4\xi$.
\end{prop}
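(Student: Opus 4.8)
The plan is to establish the two inequalities separately, following the general strategy for distance formulas in quasi-trees of spaces from \cite{BBF,BBFS}. Throughout I use the explicit model of $\mathcal{C}_K(\mathbb{Y})$: it is built from $\bigsqcup_{Y\in\mathbb{Y}} Y$ by attaching, for each \emph{adjacent} pair $X\ne Y$ (meaning $d_Z(X,Y)\le K$ for all $Z\notin\{X,Y\}$), an edge of length $K$ from each point of $\pi_X(Y)$ to each point of $\pi_Y(X)$; each $Y$ is isometrically embedded, and distances between points of the same $Y$ are measured inside $Y$.

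For the upper bound I would fix $x\in X$, $y\in Y$ and consider the finite set $\mathbb{Y}_K(x,y)=\{Z\in\mathbb{Y}: d_Z(x,y)>K\}\cup\{X,Y\}$, which is finite by Axiom (3). The first step is to order it canonically as $X=Z_0,Z_1,\dots,Z_{m+1}=Y$ so that consecutive entries are adjacent and, for each $i$, all of $Z_0,\dots,Z_{i-1}$ have projection into $Z_i$ coarsely equal to $\pi_{Z_i}(x)$ while $Z_{i+1},\dots,Z_{m+1}$ all project coarsely to $\pi_{Z_i}(y)$; this Behrstock-type ordering comes from iterating Axiom (2) together with the strong projection axioms. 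Then I would concatenate a geodesic in $Z_0$ from $x$ to a point of $\pi_{Z_0}(Z_1)$, an edge to $\pi_{Z_1}(Z_0)$, a geodesic in $Z_1$ from there to $\pi_{Z_1}(Z_2)$, and so on through to $y$; this path has length $\sum_{i=0}^{m+1} d_{Z_i}\bigl(\pi_{Z_i}(Z_{i-1}),\pi_{Z_i}(Z_{i+1})\bigr)+(m+1)K$. By the concentration property each summand is $\le d_{Z_i}(x,y)+O(\xi)=[d_{Z_i}(x,y)]_K+O(\xi)$ for the $m$ interior indices; since each of those $m$ interior terms is $\ge K$, the $m+1$ edges, the two boundary summands, and all $O(\xi)$ errors (each $\le K$ because $K\ge 4\xi$) can be absorbed to produce the stated bound $2\sum_Z[d_Z(x,y)]_K+3K$.

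For the lower bound I would take any geodesic $\gamma$ from $x$ to $y$ in $\mathcal{C}_K(\mathbb{Y})$ and write it as $\gamma_0 e_1\gamma_1\cdots e_n\gamma_n$, where $\gamma_i$ is a geodesic inside a space $Y_i$ ($Y_0=X$, $Y_n=Y$) and each $e_j$ is an edge of length $K$, so $d_{\mathcal{C}_K(\mathbb{Y})}(x,y)=\sum_{i=0}^n|\gamma_i|+nK$. A no-backtracking argument using Axiom (2) shows the $Y_i$ are pairwise distinct, and that for any $Z\notin\{Y_0,\dots,Y_n\}$ the projections $\pi_Z(Y_i)$ all coincide except possibly across one adjacent consecutive pair, so $d_Z(x,y)\le K+O(\xi)$ and hence $[d_Z(x,y)]_K=0$ after a harmless bounded adjustment of $K$. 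Therefore $\sum_Z[d_Z(x,y)]_K=\sum_{i=0}^n[d_{Y_i}(x,y)]_K$, and the concentration property again gives $d_{Y_i}(x,y)\le|\gamma_i|+O(\xi)$; summing and using $O(\xi)\le K$ yields $\sum_Z[d_Z(x,y)]_K\le \sum_i|\gamma_i|+(n+1)K\le 4\bigl(\sum_i|\gamma_i|+nK\bigr)$, i.e. the claimed lower bound.

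I expect the main obstacle to be the combinatorial core shared by both halves: establishing the Behrstock ordering of $\mathbb{Y}_K(x,y)$ and its dual, namely that a geodesic in $\mathcal{C}_K(\mathbb{Y})$ passes through each space at most once along a sequence refining that ordering, together with the quantitative statement that the projections to a given $Z$ of all earlier (resp. later) spaces concentrate near its entry (resp. exit) point. These are the delicate no-backtracking estimates of Bestvina--Bromberg--Fujiwara; once they are in hand, the two inequalities follow from the bookkeeping sketched above.
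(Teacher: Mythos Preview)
The paper does not prove this proposition at all: it is quoted verbatim from \cite[Proposition~2.4]{BBF2} and used as a black box, so there is no proof in the paper to compare your attempt against. Your sketch follows the standard Bestvina--Bromberg--Fujiwara strategy (Behrstock ordering for the upper bound, no-backtracking for the lower bound) and correctly identifies the technical core; if you want to flesh it out, the details are in \cite{BBF2} and \cite{BBFS}, but for the purposes of this paper the result is simply imported.
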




\begin{defn}[Acylindrical action]\cite{Bow08}\cite{Osin}
Let $G$ be a group acting by isometries on a metric space $(X,d)$. The action of $G$ on $X$ is called {\it acylindrical} if for any $r \ge 0$, there exist constants $R, N \ge 0$ such that for any pair $a, b \in X$ with $|a- b|_X \ge R$ then we have
\[
\# \bigl \{  g \in G \,|\, |ga- a|_X \le r \,\, \textup{and} \,\, |gb- b|_X \le r \bigr \} \le N.
\]
\end{defn}

By \cite{Bow08}, any nontrivial isometry of acylindrical group action on a hyperbolic space is either elliptic  or loxodromic.  A $(\lambda, c)$-quasi-geodesic $\gamma$ for some $\lambda, c>0$ is referred to as a \textit{quasi-axis} for a loxodromic element $g$, if $\gamma, g\gamma$ have finite Hausdorff distance depending only on $\lambda, c$.

A group is called \textit{elementary} if it is neither finite nor  virtually cyclic.
\begin{prop}\cite{BBF2}
\label{FiniteDblCosetsProp}
Assume that a non-elementary hyperbolic group $H$ acts acylindrically on a hyperbolic space $\bar Y$. For a loxodromic element $g\in H$, consider    the set $\mathbb A$ of all $H$-translates of a given $(\lambda, c)$-quasi-axis of $g$ for given $\lambda, c>0$.  Then there exists a constant
$\theta =\theta(\lambda, c)> 0$  such that for any $\gamma\in \mathbb A$, the set $$\{h\in G: diam(\pi_\gamma(h\gamma))\ge  \theta\}$$ is a finite union of double $E(g)$-cosets.

In particular, there are only finitely many distinct pairs $(\gamma, \gamma')\in  \mathbb A\times   \mathbb A$  satisfying  $diam(\pi_\gamma(\gamma'))> \theta$ up to the action of $H$.
\end{prop}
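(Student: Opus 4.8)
The plan is to reduce the statement to a counting result about the action of $H$ on $\bar Y$ and then invoke acylindricity. First I would recall that for a loxodromic element $g$ acting acylindrically on a hyperbolic space, the stabilizer $E(g)$ of the pair of endpoints $\{g^{-\infty}, g^{+\infty}\}\subset\partial\bar Y$ is a virtually cyclic group containing $\langle g\rangle$ with finite index, and that $E(g)$ coarsely stabilizes any quasi-axis $\gamma_0$ of $g$; this is the standard structure theory of loxodromics for acylindrical actions (cf. \cite{Osin}). Thus the $H$-translates of $\gamma_0$ are in $H$-equivariant coarse bijection with the coset space $H/E(g)$, and the projection-distance function $(\gamma,\gamma')\mapsto\diam(\pi_\gamma(\gamma'))$ is invariant under the diagonal $H$-action on $\mathbb A\times\mathbb A$. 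So it suffices to bound, for fixed $\gamma=\gamma_0$, the set $S_\theta=\{h\in H: \diam(\pi_{\gamma_0}(h\gamma_0))\ge\theta\}$ and to show it is a finite union of double cosets $E(g)hE(g)$; the ``in particular'' clause then follows because $S_\theta$ being a finite union of double cosets means there are finitely many $H$-orbits of pairs $(\gamma_0, h\gamma_0)$ with large projection, hence finitely many $H$-orbits in $\mathbb A\times\mathbb A$.

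Next I would establish the key geometric dichotomy: there is a threshold $\theta=\theta(\lambda,c)$ such that if $\diam(\pi_{\gamma_0}(h\gamma_0))\ge\theta$ then $h\gamma_0$ and $\gamma_0$ fellow-travel along a segment of definite length, say at least some $L_0=L_0(\lambda,c,\delta)$, within bounded Hausdorff distance of each other. This is the usual behavior of quasi-geodesics in a $\delta$-hyperbolic space: a large projection of one quasi-geodesic onto another forces a long overlap (using the Morse lemma and thin triangles). Now I would apply acylindricity: choose $r$ to be the fellow-traveling constant (the bound on the Hausdorff distance of the overlapping portions) and let $R,N$ be the constants from the acylindricity definition for this $r$. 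Picking $L_0$ large enough that the overlap contains two points $a,b\in\gamma_0$ with $d(a,b)\ge R$ and with $h^{-1}\cdot(\text{corresponding points on }h\gamma_0)$ near $a,b$, we see $h$ moves both $a$ and $b$ by at most $r$ up to the $E(g)$-ambiguity in parametrizing $\gamma_0$; more precisely, after composing $h$ with suitable elements of $E(g)$ on the left and right to normalize the overlap, the resulting element lies in the finite set of size $\le N$ given by acylindricity. This shows $S_\theta\subseteq E(g)\cdot\{g_1,\dots,g_m\}\cdot E(g)$ for finitely many $g_i$, i.e., $S_\theta$ is a finite union of double $E(g)$-cosets.

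For the ``in particular'' statement, I would note that the map $\mathbb A\times\mathbb A\to H\backslash(\mathbb A\times\mathbb A)$ sending $(\gamma,\gamma')$ with large projection to its $H$-orbit factors (after translating so that the first coordinate is $\gamma_0$) through the double-coset space $E(g)\backslash H/E(g)$ restricted to $S_\theta$, which we have just shown is finite; hence only finitely many $H$-orbits of pairs have projection exceeding $\theta$. One must also handle the degenerate overlap cases — e.g. when $h\gamma_0$ and $\gamma_0$ coincide coarsely (so $h\in E(g)$, contributing the single double coset $E(g)$) — but these are clearly finite in number.

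\textbf{Main obstacle.} The delicate point is the bookkeeping in the acylindricity application: one needs to show that after the fellow-traveling is established, the "slippage" in how $\gamma_0$ and $h\gamma_0$ are parametrized is absorbed entirely by pre- and post-composition with elements of $E(g)$, so that the genuinely new data is an element of a finite set. This requires using that $E(g)$ acts coboundedly on $\gamma_0$ (up to finite Hausdorff distance) and carefully tracking how quasi-axis parametrizations transform; making the quasi-constants uniform here — so that a single $\theta=\theta(\lambda,c)$ works — is where the real care is needed. The hyperbolic-geometry input (large projection $\Rightarrow$ long overlap) is standard, but converting "long overlap" into "finitely many double cosets" cleanly is the technical heart.
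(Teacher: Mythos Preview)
Your proposal is correct and follows the standard argument; note however that the paper does not supply its own proof of this proposition --- it is stated with a citation to \cite{BBF2} and used as a black box. Your outline (large projection forces a long fellow-travel segment; cobounded action of $E(g)$ on $\gamma_0$ lets you normalize the overlap to start near a fixed basepoint; acylindricity then bounds the number of normalized elements) is exactly the approach taken in \cite{BBF2}, and your identification of the bookkeeping in absorbing the parametrization ambiguity into left/right $E(g)$-multiplication as the technical crux is accurate.
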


\begin{lem}\cite[Lemma 2.14]{Yang}\label{ExtensionLem}
Let $H$ be a non-elementary   group admitting a co-bounded and acylindrical action on  a $\delta$--hyperbolic space $(\bar Y, d)$. Fix a basepoint $o$. Then there exist a set $F\subset H$ of three loxodromic elements and $\lambda, c>0$  with the following property.

For any $h\in H$ there exists $f\in F$ so that $hf$ is a loxodromic element and the   bi-infinite path $$\gamma=\bigcup_{i\in \mathbb Z} (hf)^{i}\left([o, ho][ho, hfo]\right)$$   is a $(\lambda, c)$--quasi-geodesic.
\end{lem}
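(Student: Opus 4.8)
The plan is to exploit the fact that an acylindrical cobounded action of a non-elementary group $H$ on a $\delta$-hyperbolic space $\bar Y$ has general type: it contains loxodromic elements whose axes form a "rich" family, and one can always compose an arbitrary element with one of finitely many loxodromics to produce a loxodromic whose translation axis makes a definite angle with any prescribed geodesic segment. The key local-to-global principle is the standard one: a bi-infinite path obtained by concatenating translates of a fixed compact piece is a uniform quasi-geodesic as soon as the Gromov products at the concatenation points are uniformly bounded. So the whole argument reduces to arranging uniformly small Gromov products at the two ``corners'' $ho$ and $hfo$ of the fundamental piece $[o,ho][ho,hfo]$, together with the corners of its $(hf)$-translates (which by equivariance is the same condition translated).

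First I would fix, using general type of the action (\cite[Lemma 3.3]{CCMT} or standard ping-pong), three independent loxodromic elements $g_1, g_2, g_3 \in H$ whose fixed-point pairs $\{g_j^{\pm}\}$ in $\partial \bar Y$ are pairwise disjoint and ``in general position''; set $F = \{g_1, g_2, g_3\}$ (or suitable large powers). Choose $\lambda_0, c_0$ so that each $g_j$ has a $(\lambda_0, c_0)$-quasi-axis $\alpha_j$ through (a bounded neighborhood of) $o$, and enlarge so that cobounded-ness gives $d(o, Ho) $-control. Given an arbitrary $h \in H$, consider the geodesic triangle-type configuration at the point $ho$: the segment $[o, ho]$ comes in, and $[ho, hfo] = h[o, fo]$ goes out in the direction of $h\alpha_f$. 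The Gromov product $\langle o, hfo\rangle_{ho}$ is small precisely when the incoming direction $[ho,o]$ and the outgoing direction $h[o,fo]$ diverge; because the three translates $h\alpha_{g_1}, h\alpha_{g_2}, h\alpha_{g_3}$ emanate from near $ho$ in three well-separated directions, at least one index $f \in F$ makes $\langle o, hfo\rangle_{ho} \le C$ for a uniform $C = C(\delta, \lambda_0, c_0)$. Symmetrically — and this is where I would use that $hf$ is itself loxodromic with the same quasi-axis constants, via acylindricity controlling $E(hf)$ and the translation length being bounded below — I must also check the Gromov product at $hfo$ between the incoming $[hfo, ho]$ and the outgoing $(hf)[o, ho] = [hfo, (hf)ho]$; but this is the $(hf)$-image of the product at $ho$, so the same bound holds automatically once we know $hf$ is loxodromic (so that $(hf)^{\mathbb Z} o$ really does march off to infinity in both directions rather than returning near $ho$). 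That $hf$ is loxodromic for the chosen $f$ follows from the standard fact that, for $f$ ranging over a set of three loxodromics in general position, $hf$ fails to be loxodromic for at most two of the three choices (an elliptic or parabolic-type obstruction would force a coincidence of fixed points that cannot happen for all three); combined with acylindricity — which upgrades ``not elliptic'' to ``loxodromic'' and gives a uniform lower bound on translation length — we get a single good $f$.

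With the two Gromov products at $ho$ and $hfo$ bounded by $C$, apply the local-to-global lemma for quasi-geodesics in $\delta$-hyperbolic spaces (Manning's bottleneck criterion \cite{M06}, or the concatenation lemma): a path built by gluing copies of a fixed $(\lambda_0, c_0)$-quasi-geodesic of uniformly bounded diameter, with uniformly small Gromov products at all gluing points, is globally a $(\lambda, c)$-quasi-geodesic for $\lambda, c$ depending only on $\delta, \lambda_0, c_0, C$ and $\diam([o,ho][ho,hfo])$. The last diameter is bounded in terms of $\diam(\bar Y / H)$ times the word length of $f$, hence uniform since $F$ is finite — actually what we need is only that the fundamental domain piece has uniformly bounded diameter, which holds because $|f|$ is bounded over the finite set $F$ and orbital maps are coarsely Lipschitz. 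This yields the desired $\gamma = \bigcup_{i} (hf)^i([o,ho][ho,hfo])$ as a uniform quasi-geodesic.

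The main obstacle I anticipate is the simultaneous control: I need \emph{one} element $f \in F$ that at once (i) makes $hf$ loxodromic with uniform axis constants, and (ii) makes the corner Gromov product at $ho$ small. Each condition individually rules out at most a bounded number of the $f$'s (two, say), so with $|F| = 3$ a pigeonhole might fail; the fix is to take $|F|$ large enough — e.g., five well-separated loxodromics, or three loxodromics together with enough of their powers — so that after excluding the bad ones for (i) and the bad ones for (ii) a good $f$ survives, and then rename a three-element ``witness'' subset if one insists on exactly three. Making the angle/divergence estimates quantitative (turning ``well-separated fixed points'' into an explicit lower bound on the divergence of the $h\alpha_f$ near $ho$, uniformly in $h$) is the technical heart, and it rests on acylindricity: it bounds how long the distinct translate-axes can fellow-travel, which is exactly what forces the three outgoing directions at $ho$ to genuinely spread out.
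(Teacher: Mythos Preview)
The paper states this lemma without proof and without citation; it is treated as a known ``extension lemma'' (versions appear in the literature on acylindrical actions and contracting elements). So there is no paper proof to compare against, and I will evaluate your sketch on its own merits.

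Your overall strategy is the standard one and is correct: pick three independent loxodromics (or high powers thereof), use that any incoming geodesic direction at $o$ can fellow-travel at most one of three pairwise well-separated outgoing directions, bound the Gromov products at the two junction points of the fundamental piece, and invoke the local-to-global principle for broken geodesics in hyperbolic spaces. That is exactly how this lemma is proved.

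There are, however, three genuine errors in the write-up that you should fix. First, the claim that the fundamental piece $[o,ho][ho,hfo]$ has uniformly bounded diameter is simply false: $d(o,ho)$ is unbounded as $h$ ranges over $H$ (indeed it must be, since $H$ contains loxodromics). Coboundedness of the action says nothing about this. Fortunately the local-to-global lemma does not require bounded pieces; what it needs is that each geodesic segment is \emph{long enough} relative to the Gromov-product bound $C$ and $\delta$. The segment $[ho,hfo]$ has length $d(o,fo)$, which you make large by taking $F$ to consist of high powers; the segment $[o,ho]$ may be short, but when it is, the Gromov products $\langle h^{-1}o,fo\rangle_o$ and $\langle f^{-1}o,ho\rangle_o$ are automatically bounded by $d(o,ho)$, and the short segment can be absorbed into its neighbours so that the concatenation reduces (up to bounded error) to the $(hf)$-orbit of the long segment $[o,hfo]$. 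Second, Manning's bottleneck criterion characterizes quasi-trees and has nothing to do with local-to-global for quasi-geodesics; the result you want is the standard broken-geodesic lemma (e.g.\ in Coornaert--Delzant--Papadopoulos, or Gromov's local-geodesic criterion). Third, and most importantly for the counting, loxodromicity of $hf$ is not a separate condition to be verified by pigeonhole: once the bi-infinite concatenation $\gamma$ is a quasi-geodesic, $hf$ acts on it by translation and is therefore automatically loxodromic. So your worry at the end that three elements may not suffice is unfounded --- conditions (i) and (ii) are not independent, (i) is a consequence of (ii), and the two Gromov-product conditions (at $ho$ and at $o$) each exclude at most one of the three $f$'s, leaving at least one good choice.
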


\begin{conv}\label{ConvQuasiLine}
When speaking of quasi-lines in hyperbolic spaces with actions satisfying Lemma \ref{ExtensionLem},   we always mean  $(\lambda, c)$--quasi-geodesics where  $\lambda, c>0$  depend on $F$ and $\delta$.
\end{conv}



\section{Property (QT) of relatively hyperbolic groups}\label{QTRHGSec}
{In this section, we are going to prove Theorem \ref{QTRHGThm}.} The notion of relatively hyperbolic groups can be  formulated from a number of equivalent ways. Here we shall present a quick definition due to Bowditch \cite{Bow12} and recall the relevant facts we shall need without proofs.

Let $H$ be a finitely generated group with a finite collection of subgroups $\mathcal P$. Fixing a finite generating set $S$,  we  consider the corresponding  Cayley graph $\Cay(H, S)$ equipped with the word metric $||_H$.

Denote by $\mathbb P=\{hP: h\in H, P\in\mathcal P\}$ the collection of peripheral cosets.  Let    $\hat{H}(\mathbb P)$ be the  \textit{coned-off Cayley graph}  obtained from $\Cay(H, S)$ as follows. A \textit{cone point} denoted by $c({P})$ is added  for each peripheral coset   $P \in\mathbb P$ and is joined  by \textit{half edges} to each element in $P$. The union of two half edges at a cone point  is called a   \textit{peripheral edge}. Denote by $|\cdot|_{\hat H}$ the induced length metric after coning-off.  

The pair $(G, \mathcal P)$ is said to be \textit{relatively hyperbolic} if the coned-off Cayley graph $\hat{H}(\mathbb P)$ is hyperbolic and \textit{fine}: any edge is contained in finitely many simple circles with uniformly bounded length.

By  \cite[Lemma 3.3]{Bow08}, \cite[Prop. 5.2]{Osin}, the action of $H$ on $\hat{H}(\mathbb P)$ is acylindrical.

Let $\pi_P$ denote the shortest projection in word metric to $P\in\mathbb P$ in $H$ and  $d_P(x, y)$ the $|\cdot|_H$-diameter of the projections of the points $x, y$ to $P$.
Since  $\mathbb P$ has the strongly contracting property with bounded intersection property,  the projection axioms with a constant $\xi>0$ hold for $\mathbb P$ (see \cite{S13}).

\subsection{Thick distance formula}

A geodesic edge path $\beta$  in the coned-off Cayley graph $\hat H(\mathbb P)$ is \textit{$K$-bounded} for $K>0$ if the end points of every peripheral edge  have  $d$-distance   at most $K$.

By definition,  a geodesic     $\beta=[x,y]$ can be subdivided into maximal  $K$-bounded non-trivial segments   $\alpha_i$ ($0\le i\le n$) separated by peripheral edges $e_j$ ($0\le j\le m$) where $|(e_j)_- - (e_j)_+|_{H}>K$. It is possible that $n=0$:   $\beta$ consists of only peripheral edges.

Define $$|\beta|_K:=\sum_{0\le i\le n} [Len(\alpha_i)]_K,$$  which   sums up the lengths of $K$-bounded subpaths of length at least $K$. It is possible that $n=0$, so $|\beta|_K=0$.  Define the \textit{$K$-thick distance}
\begin{equation}\label{RHGKthickdistEQ}
\bigl|x- y\bigr|_{\hat H}^{K}=\max\{|\beta|_K \}  
\end{equation}over   all relative geodesics $\beta$ between $x,y$. Thus, $\bigl|x- y\bigr|_{\hat H}^{K}$ is $H_v$-invariant.

A relative path without backtracking in $\hat{H}(\mathbb P)$ admits non-unique \textit{lifts} in  $\Cay(H, S)$ which are obtained by replacing the peripheral edge by a geodesic in $\Cay(H, S)$ with the same endpoints. The  distance formula follows from the fact that the lift of a relative quasi-geodesic is a quasi-geodesic (see \cite{DS05}, \cite[Prop. 6.1]{GP16}). The following formula is made explicitly in \cite[Theorem 0.1]{S13}.

\begin{lem}\label{RHGDistFLem}
For any sufficiently large $K>0$ and for any $x, y\in H$,
\begin{equation}\label{ConeoffDistFormulaEQ1}
|x- y|_H \sim_K \bigl|x- y\bigr|_{\hat H}^{K} + \sum_{P\in \mathbb P }
[ d_P(x, y)]_K.
\end{equation}
\end{lem}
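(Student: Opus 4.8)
The plan is to deduce the thick distance formula (\ref{ConeoffDistFormulaEQ1}) from the general principle that relative geodesics (more precisely, geodesics in $\mathbf{Cayley}(H)$) and paths obtained by concatenating geodesics in peripheral cosets with geodesics in the coned-off graph are mutually quasi-geodesic. This is exactly the content of the ``lifting'' results cited: Drutu--Sapir's work on tree-graded spaces, the Gersten-type estimate in \cite{GP16}, and the explicit distance formula of Sisto \cite{S13}. So the proof is essentially a matter of unpacking those references and checking that the quantities $|\beta|_K$ and $d_P(x,y)$ assemble correctly.

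First I would fix $x,y\in H$ and take a relative geodesic $\beta=[x,y]$ in $\hat H(\mathbb P)$ realizing the maximum in the definition of $d_{\hat H}^K(x,y)$; write $\beta=\alpha_0 e_0 \alpha_1 e_1 \cdots e_m \alpha_n$ as the subdivision into maximal $K$-bounded subpaths $\alpha_i$ separated by long peripheral edges $e_j$. The key geometric input (from \cite{DS05}, \cite{GP16}) is that if one replaces each long peripheral edge $e_j$, whose endpoints lie in a common peripheral coset $P_j\in\mathbb P$, by a geodesic of $\mathbf{Cayley}(H)$ inside $P_j$ joining those endpoints, then the resulting path $\hat\beta$ in $\mathbf{Cayley}(H)$ is a $(\lambda,c)$-quasi-geodesic with constants depending only on $H$, $\mathcal P$, and $K$. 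Granting this, $d_H(x,y)\asymp \mathrm{Len}(\hat\beta) = \sum_i \mathrm{Len}(\alpha_i) + \sum_j d_{P_j}((e_j)_-,(e_j)_+)$ up to the quasi-geodesic constants; the first sum is comparable to $|\beta|_K$ once one observes that discarding the $\alpha_i$ of length $<K$ changes the total by at most $nK\le K\cdot(\text{number of pieces})$, and the number of pieces is itself controlled by $d_H(x,y)$ after the quasi-geodesic estimate, so the error is absorbed into the $\sim_K$ relation.

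Next I would identify the peripheral contributions. Each long peripheral edge $e_j$ corresponds to a peripheral coset $P_j$ that the relative geodesic ``enters'', and by the bounded coset penetration / strongly contracting property of $\mathbb P$ (used already in the excerpt to get the projection axioms with constant $\xi$), the endpoints $(e_j)_-,(e_j)_+$ are within bounded distance of the shortest-projection set $\pi_{P_j}(x)$, $\pi_{P_j}(y)$ respectively; hence $d_{P_j}((e_j)_-,(e_j)_+)\sim d_{P_j}(x,y)$. Moreover, the cosets $P_j$ with $d_{P_j}(x,y)$ large are exactly the ones the geodesic penetrates deeply, so $\sum_j d_{P_j}((e_j)_-,(e_j)_+) \sim_K \sum_{P\in\mathbb P}[d_P(x,y)]_K$; the short projections ($\le K$) contribute nothing to the right-hand side and correspond to peripheral edges the geodesic does not even need, or traverses with bounded displacement. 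Combining with the previous paragraph yields one inequality $d_H(x,y)\preceq_K d_{\hat H}^K(x,y)+\sum_P [d_P(x,y)]_K$; for the reverse, one starts from a geodesic in $\mathbf{Cayley}(H)$ from $x$ to $y$, projects it to $\hat H(\mathbb P)$ to get a relative quasi-geodesic, and reads off that its $K$-bounded pieces and its peripheral penetrations are each bounded above by $d_H(x,y)$, which gives $d_{\hat H}^K(x,y)+\sum_P[d_P(x,y)]_K \preceq_K d_H(x,y)$ after again using that relative quasi-geodesics are uniformly close to relative geodesics (this is where fineness and $\delta$-hyperbolicity of $\hat H(\mathbb P)$ enter).

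The main obstacle I anticipate is bookkeeping the constants so that the statement holds for \emph{all} sufficiently large $K$ with the implied multiplicative/additive constants in $\sim_K$ depending only on $K$ (and $H,\mathcal P$), rather than deteriorating as $K$ grows. The subtle points are: (i) ensuring that the number of subdivision pieces $n+m$ is genuinely $O(d_H(x,y)/K)$ so the $nK$ rounding error is harmless; (ii) the interplay between ``a relative geodesic realizing the max in $d_{\hat H}^K$'' and ``the lift of a fixed $\mathbf{Cayley}(H)$-geodesic'' — these are different relative paths, so one must use the Drutu--Sapir stability of relative (quasi-)geodesics to compare them; and (iii) matching the penetration points of the chosen relative geodesic with the shortest projections $\pi_P(x),\pi_P(y)$ uniformly. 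All three are handled in the cited literature — in particular \cite[Theorem 0.1]{S13} packages exactly this kind of formula — so the proof is essentially a citation together with the observation that our $|\beta|_K$ and $d_{\hat H}^K$ are reformulations of the quantities appearing there; I would write it in that spirit rather than redo the stability theory.
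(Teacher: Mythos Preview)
Your proposal is correct and matches the paper's approach exactly: the paper does not give a proof at all but simply states that the formula ``follows from the fact that the lift of a relative quasi-geodesic is a quasi-geodesic, proved in \cite{DS05}, \cite[Prop.~6.1]{GP16} and is made explicitly in \cite[Theorem~0.1]{S13}.'' Your sketch is a faithful unpacking of precisely those citations, and your final remark that the proof should be written as a citation plus a dictionary between the paper's quantities and those in \cite{S13} is exactly what the paper does (minus the dictionary).
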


The following result is proved in \cite[Lemma 5.5]{NY20} under the assumption that $H$ is hyperbolic relative to a set of virtually cyclic subgroups. However, the same proof works for any relatively hyperbolic group.

\begin{lem} \label{ConeofYDistFormulaLem}
For any sufficiently large $K>0$,  there exists  an  $H$--finite collection $\mathbb A$  of quasi-lines    in $\hat H$  and a constant $N=N(K, \hat H, \mathbb A)>0$, such that  for any two vertices $x, y\in  \hat H$, the following holds
\begin{equation}\label{ThickDistanceEQ}
\bigl|x- y\bigr|_{\hat H}^{K} \sim_N \sum_{\ell\in \mathbb A}
[\hat d_\ell(x, y)]_K
\end{equation}
\end{lem}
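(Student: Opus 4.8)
\textbf{Proof plan for Lemma \ref{ConeofYDistFormulaLem}.}

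The plan is to reduce the $K$-thick distance $d_{\hat H}^K(x,y)$ to a sum of projections onto a suitable $H$-finite collection of quasi-lines in $\hat H$, using the acylindricity of the $H$-action on $\hat H(\mathbb P)$ and the projection-axioms machinery of Bestvina--Bromberg--Fujiwara. First I would invoke Lemma \ref{ExtensionLem}: since $H$ acts acylindrically (hence, after passing to the orbit, coboundedly up to quasi-isometry) on the $\delta$-hyperbolic space $\hat H$, there is a finite set $F$ of loxodromic elements and constants $\lambda,c>0$ so that for every $h\in H$ some $hf$ with $f\in F$ is loxodromic with a controlled $(\lambda,c)$-quasi-axis built from the ``staircase'' $\bigcup_i (hf)^i([o,ho][ho,hfo])$. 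I would then let $\mathbb A$ be the union over $f\in F$ of the $H$-orbits of the quasi-axes of the (finitely many) loxodromic elements arising this way; this is an $H$-finite collection of uniform quasi-lines in $\hat H$, and by Proposition \ref{FiniteDblCosetsProp} the projection axioms hold for $\mathbb A$ with some constant $\theta$, with only finitely many $H$-orbits of pairs $(\ell,\ell')$ having large projection. After replacing the projections by their strong variants (the remark after Definition \ref{defn:projaxioms}), $(\mathbb A,\pi_\ell)$ satisfies the strong projection axioms equivariantly, so Proposition \ref{BBFDistanceProp} gives $d_{\mathcal C_K(\mathbb A)}(x,y)\asymp \sum_{\ell\in\mathbb A}[\hat d_\ell(x,y)]_K$.

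The heart of the argument is then to show $d_{\hat H}^K(x,y)\sim_N \sum_{\ell\in\mathbb A}[\hat d_\ell(x,y)]_K$ directly, i.e. to compare the thick distance with the quasi-tree distance coming from $\mathbb A$. For the upper bound on $d_{\hat H}^K$, I would take a relative geodesic $\beta=[x,y]$ realizing the maximum, write it as $K$-bounded segments $\alpha_i$ separated by long peripheral edges, and cover each $\alpha_i$ by a chain of quasi-axes from $\mathbb A$: using Lemma \ref{ExtensionLem} along consecutive length-$\sim\! K$ sub-blocks of $\alpha_i$ one produces quasi-lines $\ell\in\mathbb A$ that fellow-travel long portions of $\alpha_i$, so that $\mathrm{Len}(\alpha_i)$ is coarsely bounded by $\sum_{\ell}[\hat d_\ell(x,y)]_K$ restricted to those $\ell$; summing over $i$ (and noting the peripheral edges contribute nothing to the thick length) gives $|\beta|_K\preceq_K\sum_\ell[\hat d_\ell(x,y)]_K$. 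For the reverse inequality, I would argue that any quasi-line $\ell\in\mathbb A$ with $[\hat d_\ell(x,y)]_K$ large forces a definite amount of $K$-bounded travel of $\beta$ near $\ell$ (the projection of $\beta$ onto $\ell$ is coarsely a sub-quasi-geodesic of $\beta$, and since $\ell\subset\hat H$ is a genuine quasi-line it cannot be ``swallowed'' by a single cone point, so it records $K$-bounded rather than peripheral progress); the finiteness axiom (Proposition \ref{FiniteDblCosetsProp}) and a bounded-overlap/greedy-chaining argument then let one add these contributions up with multiplicity bounded by a constant depending only on $\theta,\delta,K$.

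The main obstacle I anticipate is the reverse inequality, specifically controlling the overlaps: distinct quasi-lines in $\mathbb A$ can fellow-travel the same portion of the relative geodesic $\beta$, so naively summing $[\hat d_\ell(x,y)]_K$ could overcount the thick length. The fix is the Behrstock-type inequality built into the (strong) projection axioms together with the finiteness statement in Proposition \ref{FiniteDblCosetsProp}: for a fixed $\ell$, the set of $\ell'\in\mathbb A$ with $d_{\ell'}(\ell,\cdot)$ large is a finite union of double cosets, which bounds the number of quasi-lines that can simultaneously make large progress along any fixed segment by a constant $N$ independent of $x,y$. A secondary technical point is ensuring the quasi-lines produced by Lemma \ref{ExtensionLem} genuinely lie in $\hat H$ and are $K$-bounded-detecting rather than peripheral; this follows because the staircase paths pass through the orbit $Ho$ of the basepoint and loxodromic elements of the acylindrical action have unbounded orbits in $\hat H$, so no single peripheral coset contains more than a bounded initial segment of such a quasi-axis. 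Assembling these pieces yields (\ref{ThickDistanceEQ}) with $N=N(K,\hat H,\mathbb A)$ as claimed.
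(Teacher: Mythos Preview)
The paper does not prove this lemma; it simply cites \cite[Lemma 5.5]{NY20} and remarks that the same argument works for arbitrary peripherals. Your outline is essentially that argument, so you are on the right track, but two points need correcting.

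First, your claim in the opening paragraph that $\mathbb A$ satisfies the projection axioms is false: two $H$-translates of the same quasi-axis can fellow-travel for arbitrarily long distances, so Axiom~(\ref{axiom1}) of Definition~\ref{defn:projaxioms} fails for $\mathbb A$ as a whole. This is exactly why Corollary~\ref{QIEThickDistCor} needs separability and a passage to a finite-index subgroup to split $\mathbb A$ into sub-collections with bounded projection. Fortunately this step is irrelevant to the lemma: you never need to form $\mathcal C_K(\mathbb A)$, only to compare $d_{\hat H}^K$ with the raw sum $\sum_{\ell\in\mathbb A}[\hat d_\ell(x,y)]_K$. Drop that paragraph.

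Second, your description of $\mathbb A$ as ``the $H$-orbits of the quasi-axes of the (finitely many) loxodromic elements arising this way'' hides a gap: as $h$ ranges over $H$ the elements $hf$ lie in infinitely many conjugacy classes, so the staircase axes of Lemma~\ref{ExtensionLem} are \emph{not} $H$-finite. The fix uses that you are only covering \emph{$K$-bounded} sub-segments: when you chop a $K$-bounded segment $\alpha_i$ into pieces of $\hat d$-length $\approx K$, consecutive division points differ by group elements of \emph{word length} at most $\approx K^2$ (each relative edge on a $K$-bounded segment has $d$-jump $\le K$). There are only finitely many such elements, so enlarging $F$ to $F':=\{gf: |g|_d\le K^2,\ f\in F\}$ and taking $\mathbb A=\bigcup_{f'\in F'}H\cdot\mathrm{Axis}(f')$ gives a genuinely $H$-finite collection for which your covering argument goes through. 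Your lower-bound overcounting control is also slightly misattributed: ``finite union of double $E(g)$-cosets'' still allows infinitely many translates with large projection onto a fixed $\ell$; what bounds the number landing on a \emph{fixed window} of $\beta$ is acylindricity directly (or, equivalently, that within a single double coset the left $E(g)$-action shifts the projection window along $\ell$).
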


A group $H$ endowed with the {\it profinite topology} is a topological group  so that the set of all finite index subgroups is a (close/open) neighborhood base of the identity. A subgroup $P$ is called \textit{separable} if it is closed in the profinite topology. Equivalently, it is the intersection of all finite index subgroups  containing $P$. A group  is called \textit{residually finite} if the trivial subgroup is closed.

A maximal abelian subgroup of a residually finite group is separable (see \cite[Proposition~1]{Hamilton}). Note that a maximal elementary (i.e. virtually cyclic) group $E$ in a   relatively hyperbolic group $H$ contains a maximal abelian group (of rank 1) as a finite index subgroup. If $H$ is residually finite, then $E$ as a finite union of closed subsets is  closed and thus separable.

{We will use the following corollary in the proof of Theorem \ref{QTRHGThm}.}
\begin{cor}\label{QIEThickDistCor}
Assume that $H$ is a residually finite relatively hyperbolic group. Then for any $K\gg 0$, there exists a finite index subgroup $\dot H$ acting on  finitely many quasi-trees $T_i$ $(1\le i\le n)$ such that the orbital map of the $\dot H$-action on $\prod_{i=1}^n T_i$ is a quasi-isometric embedding from $(\dot H, |\cdot|^K_{\hat H})$ to $\prod_{i=1}^n T_i$.
\end{cor}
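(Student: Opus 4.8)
The plan is to combine the two distance formulas already established (Lemma~\ref{ConeofYDistFormulaLem} and Lemma~\ref{ConeoffDistFormulaLem}~/~\ref{RHGDistFLem}) with the projection complex machinery of Bestvina-Bromberg-Fujiwara, upgrading the non-equivariant quasi-tree-of-spaces construction to an equivariant one by passing to a suitable finite index subgroup $\dot H$. Concretely, Lemma~\ref{ConeofYDistFormulaLem} gives an $H$-finite collection $\mathbb A$ of quasi-lines in $\hat H$ with
\[
d_{\hat H}^{K}(x, y) \sim_N \sum_{\ell\in \mathbb A}[\hat d_\ell(x, y)]_K .
\]
By \cite{Bow08}, \cite{Osin} the action $H \curvearrowright \hat H$ is acylindrical, so applying Proposition~\ref{FiniteDblCosetsProp} to each $H$-orbit of quasi-lines in $\mathbb A$, together with Proposition~\ref{FiniteDblCosetsProp}'s ``finitely many pairs up to $H$'' conclusion, the collection $\mathbb A$ (after the variant replacement giving the \emph{strong} projection axioms, as in the remark after Definition~\ref{defn:projaxioms}) satisfies the projection axioms with some constant $\xi$. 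The obstruction to immediately getting an equivariant quasi-tree is the familiar one from \cite{BBF2}: the number of $H$-orbits in $\mathbb A$ may be large, and within one orbit there can be pairs $(\ell,\ell')$ with large projection that are \emph{not} $H$-equivalent, so one cannot partition $\mathbb A$ into finitely many $H$-invariant sub-collections each of which is an ``isolated'' family. The standard fix, which I would follow, is to use separability: each quasi-line $\ell$ has a stabilizer $E(\ell)$ which is a maximal virtually cyclic (hence elementary) subgroup of $H$, and since $H$ is residually finite, $E(\ell)$ is separable.

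The key steps, in order: \textbf{(1)} Fix $K \gg 0$ so that Lemma~\ref{ConeofYDistFormulaLem} applies, obtaining $\mathbb A$, $N$, and (after passing to the strong projection axioms variant) a projection constant $\xi$. \textbf{(2)} For each of the finitely many $H$-orbits $\mathbb A_1,\dots,\mathbb A_r$ of quasi-lines in $\mathbb A$, use Proposition~\ref{FiniteDblCosetsProp} to see that the ``large projection'' relation is supported on finitely many double $E(g)$-cosets; using separability of the maximal elementary subgroup $E(\ell)$ (available since $H$ is residually finite, via \cite[Proposition~1]{Hamilton} on maximal abelian subgroups and the fact that $E(\ell)$ contains a maximal abelian subgroup of finite index), find a finite index subgroup $\dot H \le H$ so that for each orbit, distinct $\dot H$-translates of a fixed $\ell \in \mathbb A_j$ that are \emph{not} in the same $E(\ell)$-coset have uniformly small projection — this is exactly the coloring/separation argument of \cite{BBF2}, which makes each $\dot H$-orbit in $\mathbb A$ into a collection on which $\dot H$ acts so that $\mathcal C_K$ of it is a genuine quasi-tree. \textbf{(3)} For each $j$, form the quasi-tree $T_j := \mathcal C_K(\dot H \cdot \ell_j)$ (for finitely many representatives; one may need to further subdivide orbits into finitely many ``sub-collections'' indexed by the finitely many bad pairs, yielding finitely many quasi-trees $T_1,\dots,T_n$), on each of which $\dot H$ acts by isometries since $\pi_{g\ell} = g\pi_\ell$. \textbf{(4)} Invoke Proposition~\ref{BBFDistanceProp} (the BBF distance formula) on each $T_j$: $d_{T_j}(x,y) \sim \sum_{\ell \in \text{(sub-collection)}}[\hat d_\ell(x,y)]_K$. \textbf{(5)} Sum over $j$ and compare with Lemma~\ref{ConeofYDistFormulaLem}: the orbital map $\dot H \to \prod_{j} T_j$ satisfies $\sum_j d_{T_j}(o_j, g o_j) \sim \sum_{\ell \in \mathbb A}[\hat d_\ell(1,g)]_K \sim d_{\hat H}^K(1,g)$, i.e.\ it is a quasi-isometric embedding from $(\dot H, d_{\hat H}^K)$ into $\prod_j T_j$.

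The main obstacle, and the step that requires genuine care, is \textbf{(2)}: engineering the finite index subgroup $\dot H$ so that the projection axioms hold \emph{equivariantly} on each $\dot H$-orbit with only finitely many quasi-trees needed. This is where residual finiteness and separability of maximal virtually cyclic subgroups are essential, and it is the exact analogue of the argument Bestvina-Bromberg-Fujiwara carry out for residually finite hyperbolic groups in \cite{BBF2}; one must check that their argument only uses acylindricity of the action on the (coned-off) hyperbolic space and separability of the relevant stabilizers, both of which we have here. A secondary technical point is bookkeeping: Proposition~\ref{FiniteDblCosetsProp} produces, for each orbit, finitely many bad $H$-orbits of pairs, so a priori one partitions $\mathbb A$ into finitely many sub-collections (one per ``color'') to guarantee each contributes one quasi-tree — this keeps $n$ finite, as required. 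Everything else is assembling the chain of $\sim$'s, which is routine given the distance formulas already in hand. Once this is done, it is immediate that if moreover each $P \in \mathcal P$ acts on a finite product of quasi-trees with property (QT) (as in the hypothesis of Theorem~\ref{QTRHGThm}), one can glue the peripheral quasi-trees to the $T_j$'s via Lemma~\ref{RHGDistFLem} to conclude property (QT) for $H$ itself — but Corollary~\ref{QIEThickDistCor} as stated only asks for the thick-distance part, which is precisely steps (1)--(5).
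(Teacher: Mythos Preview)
Your proposal is correct and follows essentially the same route as the paper's proof: start from the $H$-finite collection $\mathbb A$ of quasi-lines given by Lemma~\ref{ConeofYDistFormulaLem}, use acylindricity of $H\curvearrowright\hat H$ together with Proposition~\ref{FiniteDblCosetsProp} and separability of the maximal elementary stabilizers $E(\ell)$ (from residual finiteness) to pass to a finite index subgroup $\dot H$ whose orbits $\mathbb T_i$ in $\mathbb A$ each satisfy the projection axioms with constant $\xi$, and then apply the BBF distance formula (Proposition~\ref{BBFDistanceProp}) on each $T_i=\mathcal C_K(\mathbb T_i)$ to match $\sum_i d_{T_i}$ with the thick distance via Lemma~\ref{ConeofYDistFormulaLem}. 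The paper presents exactly this argument as a sketch, citing the same separability input and the paragraph after Lemma~2.1 in \cite{BBF2} for the passage to $\dot H$.
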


This corollary is essentially proved in \cite{NY20}, inspired by the arguments in the setting of mapping class groups \cite{BBF2}. We sketch the proof at the convenience of the reader.

\begin{proof}[Sketch of proof]
Recall that for any $\theta>0$, a set $\mathbb T$ of (uniform) quasi-lines {in a hyperbolic space}  with $\theta$-bounded projection satisfies the projection axioms with projection constant $\xi$ for a constant $\xi=\xi(\theta)>0$.   Let $\lambda$ and $c$ be the constants given by Lemma~\ref{ExtensionLem} with respect to the acylindrical action $H \curvearrowright \hat{H}$. For our purpose, we will choose $\theta$ to be the constant given by Proposition~\ref{FiniteDblCosetsProp}.  Then the distance formula for the quasi-tree $\mathcal C_K(\mathbb T)$ constructed  from $\mathbb T$ holds for any $K\ge 4\xi$.

For  a  fixed large constant $K$, Lemma \ref{ConeofYDistFormulaLem}  provides an $H$-finite set of quasi-lines $\mathbb A$ so that (\ref{ThickDistanceEQ}) holds. We then use the separability to find a finite index subgroup $\dot H$ of $H$   so that $\mathbb A$ decomposes as a finite union of  $\dot H$-invariant $\mathbb T_i$'s each of which satisfies the projection axioms with projection constant $\xi$. To be precise, the stabilizer $E$ of a quasi-line $\ell$ in $\mathbb A$  is a maximal elementary subgroup of $H$ and thus is separable in $H$ if $H$ is residually finite (since a maximal abelian group in a residually finite group is separable).  By  Proposition \ref{FiniteDblCosetsProp} {and the paragraph after Lemma~2.1 in \cite{BBF2}}, the separability of $E$  allows one to choose a finite index subgroup $\dot H$ containing $E$ such that any $\dot H$-orbit $\mathbb T_i$ in the collection of quasi-lines $H\ell$ satisfies the projection axioms with projection constant $\xi$. We take a common finite index subgroup $\dot H$ for finitely many quasi-lines $\ell$ in $\mathbb A$ up to $H$-orbits and therefore have found all $\dot H$-orbit $\mathbb T_i$ so that their union covers $\mathbb A$.

Finally, it is straightforward to verify that the right-hand term of (\ref{ThickDistanceEQ}) coincides with the sum of  distances over the finitely many quasi-trees $T_i:=\mathcal C_K(\mathbb T_i)$. Thus, the thick distance $d_{\hat H}^{K}(x, y)$ is quasi-isometric to the distance on a finite product of quasi-trees.
\end{proof}

All our discussion generalizes to the geometric action of $H$ on a geodesic metric space $Y$, since there exists a $H$-equivariant quasi-isometry between $\Cay(H,S)$ and $Y$. Therefore, replacing $H$ with $Y$, we have the same thick distance formula. This is the setup for CKA actions in next sections.

In next subsection, we obtain the property (QT) for relatively hyperbolic groups provided peripheral subgroups do so.

\subsection{Proof of property (QT) of relatively hyperbolic groups}

\begin{proof}[Proof of Theorem~\ref{QTRHGThm}]

Recall that $\mathcal P$ is a finite set of subgroups. For each $P\in \mathcal P$, choose a full set $E_P$ of left $P$-coset representatives in $H$ so that $1\in E_P$. For   given $P$ and  $1\le i\le n_P$, we define the collection of quasi-trees  $$\mathbb T_P^i :=\{fT_i: f\in E_P \}$$ where $T_i$ are quasi-trees associated to $P$ given in assumption. Then $H$ preserves $\mathbb T_P^i$ by the following action: for any point $f(x)\in fT_i$ and $h\in H$,
$$
h\cdot f (x) := f'p(x)\in f'T_i
$$
where $p\in P$ is given by $hf=f'p$ for $f'\in E_P$.

We are now going to define projection maps $\{\pi_{fT_{i}} \}$ as follows.

By assumption, we fix an orbital embedding $\iota^i_P$ of $P$ into $T_i$ so that the induced map $\prod_{i=1}^{n_P} \iota^i_P: P\to \prod_{i=1}^{n_P} T_i$ is a quasi-isometric embedding. We then define  an equivariant family of orbital maps $\iota_{fP}^i: fP\to fT_i$ so that $$\forall x\in fP,\; \iota_{fP}^i(x):= f\iota_P^i(f^{-1}x).$$
Then for any $h\in H$ and $x\in fP$, $h\cdot \iota_{fP}^i(x)=\iota_{f'P}^i(hx)$ where $f'\in E_P$ with $hf=f'p$ and $p\in P$.

Let $\pi_{fP}$ be the shortest projection to the coset $fP$  in $H$  with respect to the word metric.   For any two distinct $fT_i, f'T_i\in \mathbb T_P^i$, we set $$\pi_{fT_i}(f'T_i):=\iota_{fP}^i(\pi_{fP}(f'P))$$

Recall that $\mathbb P=\{fP: f\in H, P\in\mathcal P\}$ satisfies the projection axioms with shortest projection maps $\pi_{fP}$'s. It is readily checked that the projection axioms pass to the collection $\mathbb T^i_P$ under equivariant Lipschitz maps $\{\iota_{fP}^i\}_{fP\in\mathbb P}$.

We can therefore build the projection complex for $\mathbb T_P^i$ for a fixed $K\gg 0$. By Proposition~\ref{BBFDistanceProp}, the following distance holds for any $x',y'\in \mathcal C_K(\mathbb T_P^i)$:
\begin{align}\label{ParabolicQTEQ}
\bigl|x'- y'\bigl|_{\mathcal C_K(\mathbb T_P^i)} \sim_K \sum_{T\in \mathbb T_P^i} [d_T(x',y')]_K.
\end{align}

Note that  $\prod_{i=1}^{n_P} \iota^i_P: P\to \prod_{i=1}^{n_P} T_i$ is a quasi-isometric embedding for each $P\in\mathcal P$. Thus, for any $x, y\in G$ and $P\in\mathbb P$,
\begin{align}\label{dpxyEQ}
d_P(x,y)=|\pi_P(x)- \pi_P(y)|_P \sim \sum_{i=1}^{n_P} \bigl|\iota^i_P(\pi_P(x))- \iota^i_P(\pi_P(y))\bigr|_{T_i}.
\end{align}

Setting $x'=\iota^i_P(\pi_P(x))$ and $y'=\iota^i_P(\pi_P(y))$ in (\ref{dpxyEQ}), we deduce from (\ref{ParabolicQTEQ}) that  {
\begin{align}\label{dpxyUBDEQ}
d_P(x, y) \preceq_K \sum_{i=1}^{n_P} \bigl|\iota_P^i(\pi_P(x))- \iota^i_P(\pi_P(y))\bigr|_{\mathcal C_K(\mathbb T_P^i)}.
\end{align}}


Recall from Lemma~\ref{RHGDistFLem} that for any $x,y\in H$, we have
$$
|x- y|_H \sim_K \bigl|x- y\bigr|_{\hat H}^{K} + \sum_{P\in \mathbb P }
[ d_P(x, y)]_K.
$$
Note that the orbital map of any   isometric action is Lipschitz. {To prove property (QT) of $H$, it suffices to give an upper bound of $|x-y|_H$. Taking account of (\ref{dpxyUBDEQ}), it remains to construct a finite product of quasi-trees to bound $\bigl|x- y\bigr|_{\hat H}^{K}$  as follows.}

Since $H$ is residually finite, by Corollary \ref{QIEThickDistCor}, there exists a finite index subgroup,  still denoted by $H$, and  a finite product $Y$ of quasi-trees  so that the orbital map $\Pi_0$ from $ H$ to $Y$ gives a quasi-isometric embedding of $H$ equipped with $|\cdot|^K_{\hat H}$-function into $Y$.

Recall that $\pi_P$ is the shortest projection to $P\in \mathbb P$. For $1\le i\le n_P$, define $$
\Pi_i:  H\to \mathcal C_{K}(\mathbb T_P^i)
$$
by sending an element $h\in H$ to $\iota^i_P(\pi_P(h))$.
We then have $n$ equivariant maps $\Pi_i$ of $H$ to quasi-trees after re-indexing, where $n:=\sum_{P\in \mathcal P} n_P$.



Let $\Pi :=\Pi_0\times \prod_{i=1}^{n} \Pi_i$ be the map from $H$ to $Y\times \prod_{i=1}^{n} \mathcal{C}_{K}(\mathbb T_P^i)$, where $Y$ is the finite product of quasi-trees as in the previous paragraphs. As fore-mentioned,  the product map $\Pi$ gives an upper bound on $d_H(x,y)$, so is a quasi-isometric embedding of $H$. Therefore, $H$ has property (QT).
\end{proof}

\begin{rem}
{{An immediate corollary of Theorem~\ref{QTRHGThm} is that the fundamental group of a finite volume hyperbolic 3-manifold has property (QT). Alternate proof is that $\pi_1(M)$ is virtually compact special by deep theorems of Agol and Wise (see \cite{Agol} \cite{Wise20}), thus $\pi_1(M)$ has property (QT).}}
\end{rem}

We say that the profinite topology on $H$ induces a \textit{full profinite topology} on a subgroup $P$ if every finite index subgroup of $P$ contains the intersection of  $P$ with a finite index subgroup in $H$.

\begin{thm}
\label{thm:sepaQT}
Suppose that $H$ is residually finite and each $P\in\mathcal P$ is separable. Assume furthermore that $H$ induces the full profinite topology on each $P\in\mathcal P$. If each $P\in\mathcal P$ acts by isometry on  a finite product of quasi-trees without $\mathbb R$-factor {such that orbital maps are quasi-isometric embeddings}, then $H$ has property (QT).
\end{thm}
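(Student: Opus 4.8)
The plan is to reduce Theorem~\ref{thm:sepaQT} to Theorem~\ref{QTRHGThm} by replacing each peripheral subgroup $P$ with a finite-index subgroup $\dot P$ whose action on the relevant product of quasi-trees does preserve the factors, and then patching the orbital embedding of $P$ together from the finitely many cosets of $\dot P$ inside $P$. First I would unpack the hypothesis: each $P\in\mathcal P$ acts on $Z_P=\prod_{j=1}^{m_P} S_j^P$, a finite product of quasi-trees with no $\mathbb R$-factor, so that the orbital map is a quasi-isometric embedding. By Corollary~\ref{deRhamDecompQTCOr} and \cite[Corollary~1.3]{FL08} applied to $Z_P$, there is a finite-index subgroup $\dot P\le P$ that preserves each factor $S_j^P$, i.e.\ $\dot P$ acts on each $S_j^P$ by isometries with the diagonal orbital map into $\prod_j S_j^P$ still a quasi-isometric embedding. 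In other words, each $\dot P$ has property (QT) witnessed by a factor-preserving action, which is exactly the shape of hypothesis needed to invoke Theorem~\ref{QTRHGThm}, \emph{if} we were allowed to use $\dot P$ as the peripheral structure.

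The next step is to change the peripheral structure of $H$ from $\mathcal P$ to $\dot{\mathcal P}=\{\dot P: P\in\mathcal P\}$. Here I would use the standard fact (e.g.\ from the theory of relatively hyperbolic groups, or directly from the fine hyperbolic coned-off graph description recalled before Lemma~\ref{RHGDistFLem}) that if $(H,\mathcal P)$ is relatively hyperbolic and each $\dot P$ is a finite-index subgroup of $P$, then $H$ is hyperbolic relative to the enlarged finite collection $\dot{\mathcal P}$ consisting of a set of conjugacy-representatives of the $\dot P$ together with finitely many extra finite subgroups accounting for the coset combinatorics; more simply, one may use that finite-index overgroups/subgroups of peripherals preserve relative hyperbolicity. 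Crucially, each member of the new peripheral collection is commensurable to some $P$, hence still residually finite and still separable in $H$: separability of $\dot P$ follows since $\dot P$ has finite index in the separable subgroup $P$, so $\dot P$ is a finite union of cosets each of which is separable (a finite-index subgroup of a separable subgroup in any group is separable). Moreover each $\dot P$ acts by isometry on a finite product of quasi-trees so that orbital maps are quasi-isometric embeddings, with the action preserving factors — so in particular the diagonal action on the product has property (QT). Thus the hypotheses of Theorem~\ref{QTRHGThm} are met for $(H,\dot{\mathcal P})$ and we conclude that $H$ has property (QT).

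There is one gap in the reasoning above that is exactly where the "full profinite topology" hypothesis enters, and I expect it to be the main obstacle. The subtlety is that Theorem~\ref{QTRHGThm} is applied with the new peripherals $\dot P$, but to get an honest quasi-isometric embedding of $H$ we need, in the proof of Theorem~\ref{QTRHGThm} (specifically around \eqref{ParabolicQTEQ}--\eqref{dpxyUBDEQ}), that the collection $\mathbb T_{\dot P}^{\,i}=\{f S_i^{\dot P}: f\in E_{\dot P}\}$ of $H$-translates still satisfies the projection axioms after passing to a further finite-index subgroup of $H$ that makes the various $\dot H$-orbits well-behaved — and we must be able to do this while the finite-index subgroup of $H$ still intersects each $P$ in a subgroup that, in turn, is contained in (a conjugate of) $\dot P$. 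This is precisely controlled by assuming $H$ induces the \emph{full} profinite topology on each $P$: every finite-index subgroup of $P$ (in particular $\dot P$) contains $P\cap H_0$ for some finite-index $H_0\le H$. So I would run the argument as: apply Theorem~\ref{QTRHGThm} to $(H,\dot{\mathcal P})$; track the finite-index subgroup $\dot H\le H$ that its proof produces; use the full profinite topology to replace $\dot H$ by a smaller finite-index subgroup $H_0$ with $H_0\cap P\le \dot P$ for every $P$ (up to conjugacy), intersecting over the finitely many $P$; then $H_0$ has property (QT) and hence so does $H$, since property (QT) passes to finite-index overgroups (it is a commensurability invariant, as recalled in the introduction). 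Assembling the orbital embedding of each $P$ from the finitely many $\dot P$-cosets is routine — a finite product of the factor actions over coset representatives, exactly as $\mathbb T_P^i$ is built from $E_P$ in the proof of Theorem~\ref{QTRHGThm} — so the only real work is the bookkeeping that the separability of $\dot P$ plus the full profinite topology suffice to keep everything $H_0$-equivariant and factor-preserving simultaneously.
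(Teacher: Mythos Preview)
Your proposal contains a genuine gap in the second paragraph: the claim that if $(H,\mathcal P)$ is relatively hyperbolic and $\dot P\le P$ has finite index, then $H$ is hyperbolic relative to $\dot{\mathcal P}=\{\dot P:P\in\mathcal P\}$ is \emph{false} in general. Two distinct $\dot P$-cosets lying inside the same $P$-coset are at finite Hausdorff distance from one another, so the bounded coset penetration property fails and the collection of $\dot P$-cosets does not give a legitimate peripheral structure on $H$. Thus you cannot apply Theorem~\ref{QTRHGThm} to $(H,\dot{\mathcal P})$ as written. Your parenthetical claim that ``a finite-index subgroup of a separable subgroup in any group is separable'' is also false in general; this is precisely what the full profinite topology hypothesis is for, not a side remark.

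The paper's proof avoids this by never changing the peripheral structure on $H$. Instead it passes to a finite-index \emph{normal} subgroup $\dot H\le H$: using the full profinite topology on each $P$, one finds finite-index $\dot H_P\le H$ with $\dot H_P\cap P=\dot P$, sets $\dot H=\bigcap_{h,P} h\dot H_P h^{-1}$, and observes that $\dot H$ is relatively hyperbolic with respect to $\{\dot H\cap hPh^{-1}:h\in H\}$. Normality gives $\dot H\cap hPh^{-1}\subset h\dot Ph^{-1}$, so each new peripheral preserves the factors of the product of quasi-trees, and Theorem~\ref{QTRHGThm} applies directly to $\dot H$. You do reach essentially this idea in your final paragraph (finding $H_0\le H$ with $H_0\cap P\le\dot P$), but you frame it as a patch on top of the incorrect application of Theorem~\ref{QTRHGThm} to $(H,\dot{\mathcal P})$, rather than as the actual argument. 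The fix is to drop the second paragraph entirely and go straight from the factor-preserving $\dot P$ to the finite-index normal subgroup $\dot H$ of $H$.
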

\begin{proof}
By \cite[Corollary 1.3]{FL08}, there is a finite index subgroup $\dot P$ of $P$ acting on each quasi-tree $T_i$ so that the diagonal action of $\dot P$ on $\prod_{i=1}^n T_i$ induces a quasi-isometric embedding orbital map $\prod_{i=1}^n\iota_{\dot P}^i$.

{By the assumption, $H$ induces the full profinite topology on $P\in \mathcal P$,  so every finite index  subgroups of a separable subgroup $P$ is also separable.} Thus, there are finite index subgroups $\dot H_P$ of $H$ for $P\in\mathcal P$ such that $\dot P=\dot H_P\cap P.$

Consider the finite index normal subgroup    $\dot H:=\cap \{h\dot H_Ph^{-1}: P\in\mathcal P\}$ in $H$.    Since $\dot H$ is normal in $H$,  we see that  $\dot H\cap hPh^{-1}\subset h \dot Ph^{-1}$ is equivalent to   $\dot H\cap P\subset \dot P$. The later holds by the choice  of $\dot H_P$.  Hence, for every $h\in  H$, $\dot H\cap hPh^{-1}$ preserves   the factors of the product decomposition.  Note that  $\dot H$ is hyperbolic relative to $\{\dot H\cap hPh^{-1}: h\in  H\}$.  The conclusion follows from Theorem \ref{QTRHGThm}.
\end{proof}

{In next sections (Sections~\ref{ConeoffSpaceSec}, ~\ref{FiberLineSystemSec}, ~\ref{CKADistFormulaSec} and ~\ref{ProofQTCKASec}), the proof of property (QT) of   CKA groups   will be discussed, which may be considered as the technical heart of this paper.} 

\section{Coning off CKA spaces}\label{ConeoffSpaceSec}

In this section, we recapitulate  the content of  \cite[Sect. 5]{NY20} and give an outline of  the proof of Theorem~\ref{QTCKAThm}.

Let $G \curvearrowright X$ be a CKA action where $G$ is the fundamental group of an admissible graph of groups $\mathcal{G}$ (see Subsection~\ref{CKA}), and let $G \curvearrowright T$ be the action of $G$ on the associated Bass-Serre tree $T$ of $\mathcal{G}$. Let $T^0$ and $T^1$ be the vertex and edge sets of $T$.

Let $\{F_e \}$ be the collection of boundary planes of the space $Y_v$ (see  Subsection~\ref{CKA}). We note that the intersection of a boundary plane $F_e$ of $Y_v$ with the hyperbolic space $\bar{Y}_v$ is a line. We define the collection of lines $\mathbb{L}_v$ of the hyperbolic space $\bar{Y}_v$ as   follows:
$$
\mathbb{L}_{v} = \{ \ell_e := F_{e} \cap \bar{Y}_v \, | \, e_{-} = v \}
$$
which shall be referred {\textit{boundary lines}}.

\subsection{Construction of coned-off spaces}
\label{coned-off spaces}
Recall that $T^0=\mathcal V_1\cup\mathcal V_2$ where $\mathcal V_i$ consists of vertices in $T$ with pairwise even distances.  Let $\dot G < G$ be the subgroup of index at most $2$ preserving $\mathcal V_1$ and $\mathcal V_2$ given by Lemma \ref{lem:index2subgroup}.

Fix a large $r>0$. A \textit{hyperbolic $r$-cone} by definition is the metric completion of the (incomplete) universal cover of a punctured hyperbolic disk of radius $r$. Let $\mathbb Y_i=\{\bar Y_v: v\in \mathcal V_i\}$ be the collection of hyperbolic spaces and $\dot{ \mathbb Y}_i=\{\dot Y_v: v\in \mathcal V_i\}$  their coned-off spaces (which are uniformly hyperbolic for $r\gg 0$) by attaching hyperbolic $r$-cones along the boundary lines of $\bar Y_v$.

Note that $\dot G$ preserves $\mathbb Y_i$ and $\dot{ \mathbb Y}_i$ by the action on the index $gY_v=Y_{gv}$ for any $g\in \dot G$.
For each $w\in T^0$, let $St(w)$ be the star of $w$ in $T$ with adjacent vertices as \textit{extremities}. Then $St(w)$ admits the action of $G_w$ so that the stabilizers of the extremities are the corresponding edge groups.

Define $\dot {\mathcal X_i}$ to be the space obtained from the disjoint union of coned-off spaces $\dot Y_v$ $(v\in \mathcal V_i)$ with cone points identified with the extremities of the stars $St(w)$ with $v\in Lk(w)$. Endowed with induced length metric, the space $\dot {\mathcal X_i}$ is a Gromov-hyperbolic space.

\begin{lem}\label{ActOnConeoffHypspaceLem}
Fix a sufficiently large $r>0$ and $i\in \{1,2\}$. The space $\dot{\mathcal  X_i}$  is a $\delta$--hyperbolic space where $\delta>0$ only depends on the hyperbolicity constants of $\dot Y_v$  ($v\in \mathcal V_i$).

The subgroup $\dot G$   acts on  $\dot{\mathcal X_i}$    with   the following properties:
\begin{enumerate}
\item
for each $v\in \mathcal V_i$, the stabilizer of $\dot Y_v$ is isomorphic to $G_v$ and $H_v$ acts co-boundedly on $\dot Y_v$, and
\item
    for each $w\in  T^0\setminus \mathcal V_i$, $G_w$ acts on $St(w)$ in the same manner of the action on Bass-Serre tree $T$.
\end{enumerate}
\end{lem}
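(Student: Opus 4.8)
\textbf{Plan of proof for Lemma~\ref{ActOnConeoffHypspaceLem}.}
The statement bundles three assertions: uniform hyperbolicity of $\dot{\mathcal X_i}$, and the two structural properties of the $\dot G$--action. The plan is to obtain the first by a combination theorem, and then read off the structural properties from the construction. First I would recall that $\dot{\mathcal X_i}$ is built by gluing the coned-off spaces $\dot Y_v$ ($v\in\mathcal V_i$) to the collection of stars $St(w)$ ($w\in T^0\setminus\mathcal V_i$) along the cone points / extremities, and that each $\dot Y_v$ is $\delta_0$--hyperbolic for $r\gg 0$ (this is the content of the preceding paragraph, citing the hyperbolic $r$-cone construction of Groves-Manning / Bowditch; the key point is that the attaching loci are the boundary lines of $\bar Y_v$, which become uniformly bounded-diameter cone points after coning, so the resulting spaces are uniformly hyperbolic with $\delta_0$ independent of $v$ once $r$ is chosen large). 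The stars $St(w)$ are trees, hence $0$--hyperbolic. So $\dot{\mathcal X_i}$ is a tree-graded-type amalgam of uniformly hyperbolic pieces glued along points.

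The main step is to verify hyperbolicity of the glued space. I would treat $\dot{\mathcal X_i}$ as the total space of a graph (or tree) of spaces whose underlying graph is bipartite with one part the vertices $v\in\mathcal V_i$ (carrying $\dot Y_v$) and the other part the vertices $w\in T^0\setminus\mathcal V_i$ (carrying $St(w)$), with edge spaces being single points (the cone points = extremities). Because all edge spaces are points and all vertex spaces are uniformly hyperbolic, the Bestvina-Feighn combination theorem (or the simpler fact that a tree of hyperbolic spaces glued along points with uniformly bounded ``flaring'' is hyperbolic) applies: the only hypothesis to check, the hallways-flare / bounded backtracking condition, is automatic here since edge maps are inclusions of points. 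Equivalently one can argue directly: a geodesic in $\dot{\mathcal X_i}$ alternates between segments in pieces $\dot Y_v$ and segments in stars, each segment is a geodesic in its (uniformly hyperbolic) piece, and since consecutive pieces meet in single cut points, the concatenation is a local geodesic that one upgrades to a quasigeodesic; then a standard thin-bigon / guessing-geodesics argument (e.g.\ Bowditch's criterion for hyperbolicity, or Manning's bottleneck criterion) gives a uniform $\delta$ depending only on $\delta_0$ (hence, via the previous paragraph, only on the hyperbolicity constants of the $\dot Y_v$). This is where I expect the real work to lie: making the ``concatenation of geodesics across cut points is a quasigeodesic with uniform constants'' argument clean, and isolating the dependence of $\delta$ on $\delta_0$ alone.

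For the structural properties, property (1): the stabilizer of $\dot Y_v$ in $\dot G$ is exactly the stabilizer of $Y_v$, which is $G_v$ (this is the $G$--equivariance of $v\mapsto Y_v$ recorded in the preliminaries), and since coning off is performed $G$--equivariantly along the boundary lines, $G_v$ still acts on $\dot Y_v$; the center $Z(G_v)=\Z$ acts trivially on the quotient after coning (it translates along a boundary line that has been collapsed to a cone point), so the induced action factors through $H_v=G_v/Z(G_v)$, and $H_v$ acts coboundedly on $\dot Y_v$ because it already acts coboundedly on $\bar Y_v$ and coning off only adds bounded-diameter cones. Property (2): a vertex $w\in T^0\setminus\mathcal V_i$ contributes the star $St(w)$ verbatim into $\dot{\mathcal X_i}$, and $G_w$ acts on $St(w)$ exactly as it acts on the corresponding star in the Bass-Serre tree $T$, with extremity stabilizers the edge groups $G_e$ — this is immediate from the construction since the identification of cone points of $\dot Y_v$ with extremities of $St(w)$ is by the edge-group actions, which match. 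I would close by noting that the constant $r$ and hence $\delta$ can be chosen uniformly over $i\in\{1,2\}$ since there are only finitely many $G$--orbits of vertices in $T$.
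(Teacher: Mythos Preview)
The paper does not give a proof of this lemma; it is stated as following from the construction (with a remark pointing to \cite{NY20} for a closely related construction). Your plan is essentially the argument one would write down, and the hyperbolicity step is fine: $\dot{\mathcal X_i}$ is a tree of uniformly hyperbolic spaces glued along single points (the cone points identified with star extremities), so it is tree-graded with uniformly $\delta_0$--hyperbolic pieces, and the direct cut-point argument you sketch gives uniform hyperbolicity.

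There is one genuine slip in your treatment of property~(1). You write that $Z(G_v)$ ``translates along a boundary line that has been collapsed to a cone point.'' This conflates two different families of lines. The center $Z(G_v)$ acts by translation on the $\mathbb R$--factor of $Y_v=\bar Y_v\times\mathbb R$ and hence \emph{already acts trivially on all of $\bar Y_v$}, before any coning is done. The boundary lines that get coned off in forming $\dot Y_v$ are lines \emph{inside} $\bar Y_v$ (the projections $\ell_e$ of the boundary planes $F_e$), not the $\mathbb R$--fibers. So the correct reason the $G_v$--action on $\dot Y_v$ factors through $H_v=G_v/Z(G_v)$ is simply that $Z(G_v)$ is invisible on $\bar Y_v$ to begin with; coning plays no role in killing the center. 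Your conclusion that $H_v$ acts coboundedly on $\dot Y_v$ is then correct, since $H_v$ acts geometrically on $\bar Y_v$ and the attached hyperbolic cones have uniformly bounded diameter and are permuted by $H_v$. With this correction the argument goes through.
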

\begin{proof}
Note that the stabilizers of the cone points of $\dot Y_v$ under the action of $G_v$ on $\dot Y_v$ are the same as that of the extremities of stars $St(w)$, which are both the edge groups $G_e$ for $e=[v,w]$.  By construction, the cone points of $\dot Y_v$ are identified  with  the extremities of stars $St(w)$, so    the actions of $G_v$ on $\dot Y_v$ ($v\in \mathcal V_i$) and of $G_w$ on $St(w)$ ($w\in T^0\setminus \mathcal V_i$) extend over $\dot {\mathcal X_i}$, and hence $\dot G$ acts by isometries on $\dot {\mathcal X_i}$.
\end{proof}
\begin{rem}
Our construction of coned-off spaces is slightly different from the one in \cite[Section 5.1]{NY20}, where the cone points are identified directly between different spaces $\dot Y_v$ and $\dot Y_{v'}$. Thus certain assumption on vertex groups is necessary in \cite{NY20} to ensure an action on the coned-off space.
\end{rem}

We now define the thick distance on $\dot {\mathcal X_i}$ $(i=1,2)$ by taking the sum of thick distances through $\dot Y_v$ as follows.

If $x$ is a point in a coned-off space $\dot Y_{v} \subset \dot {\mathcal{X}}$, we denote $\rho(x)$ by $v$ (by abuse of notations).
By the above tree-like construction,  any path between  $x, y\in \dot {\mathcal X_i}$ has to pass through in order a pair  of boundary lines $\ell_{v}^-, \ell_v^+$ of $\bar Y_{v}$ for each $v\in [\rho(x),\rho(y)]$.
By abuse of language, if $x$ is not contained in a hyperbolic cone, set $\ell_{v}^-=x$ for $v=\rho(x)$. Similarly, if $y$ is not contained in a hyperbolic cone, set $\ell_{v}^+=y$ for $v=\rho(y)$.

Let $(x_v,y_v)$ be a pair of points in the boundary lines $(\ell_{v}^-, \ell_v^+)$  so that $[x_v,y_v]$ is orthogonal to $\ell_{v}^-$ and $\ell_v^+$. Recall that $\bigl|x_v-y_v\bigr|_{\dot Y_v}^{K}$ is the $K$-cut-off thick distance defined in (\ref{RHGKthickdistEQ}). 
\begin{defn}
\label{defn:Kthickdistance}
For any $K\ge 0$, the \textit{$K$-thick distance} between $x$ and $y$ is defined by
\begin{equation}\label{ThickDistEQ}
\bigl|x- y\bigr|^{K}_{\dot {\mathcal X_i}}:=\sum_{v\in[\rho(x),\rho(y)]\cap \mathcal V_i} \bigl|x_v -y_v\bigr|_{\dot Y_v}^{K}.
\end{equation}

\end{defn}

Since $|\cdot|_{\dot Y_v}^{K}$ is $H_v$-invariant, we see that $\bigl|x- y\bigr|^{K}_{\dot {\mathcal X_i}}$ is $\dot G$-invariant.

\begin{rem}
The definition of $|\cdot |^{K}_{\dot {\mathcal X_i}}$ is designed to ignore the parts in hyperbolic cones between different pieces. One consequence is that perturbing $x,y$ in hyperbolic cones does not change their $K$-thick distance.
\end{rem}

\subsection{Construct the collection of quasi-lines in $\dot {\mathcal X}_i$}
\label{SSQuasilines}

If $E(\ell)$ denotes the stabilizer in $H_v$ of a boundary line $\ell$ of $\bar Y_v$,  then $E(\ell)$ is virtually cyclic and almost malnormal. Since $\{E(\ell)\}$ is $H_v$-finite by conjugacy, let $\mathbb E_v$ be a complete finite set  of conjugacy representatives.  By Lemma \ref{MalnormalEdgegroupsLem}, $H_v$ is hyperbolic relative to peripheral subgroups $\mathbb E_v$.  Hence, the results in Section \ref{QTRHGSec} apply here.

Let $\lambda, c>0$ be the universal constants given by Lemma \ref{ExtensionLem} applied to the actions of $H_v$ on $\dot Y_v$ for all $v\in T^0$ (since there are only finitely many actions up to conjugacy). By convention, the quasi-lines in coned-off spaces are understood as $(\lambda, c)$--quasi-geodesics in $\dot {\mathcal X_i}$ and  $\dot Y_v$'s.

The coning-off construction has the following consequence (\cite[Lemma 5.14]{NY20}): the shortest projection of any  quasi-line  $\alpha$ in $\dot Y_v$ to a quasi-line $\beta$ in $\dot Y_{v'}$ has to pass through the cone point attached to $\dot Y_{v'}$, and thus has uniformly bounded diameter by $\theta=\theta(\lambda, c)>0$.

For simplicity, we also assume that   $\theta=\theta(\lambda, c)>0$ satisfies the conclusion of  Proposition \ref{FiniteDblCosetsProp}. Consequently, this determines a constant $\xi=\xi(\theta)>0$ such that any set of quasi-lines with $\theta$-bounded projection satisfies the projection axioms with projection constant $\xi$.

Fix $K>\max\{4\xi, \theta\}$. For each $v\in \mathcal V$,  there exists  an $H_v$-finite collection of quasi-lines $ \mathbb A_v$    in $\dot Y_v$ and a constant $N=N(\mathbb A_v, K)$ such that $d^K_{\hat H_v}$-distance formula  holds by Lemma \ref{ConeofYDistFormulaLem}.

Since $\dot G$ acts co-finitely on $\mathcal V_1$ and $\mathcal V_2$,     we can assume  $\mathbb A_w=g\mathbb A_v$ if $w=gv$ for $g\in \dot G$.
Let $$\mathbb A_i :=\cup_{v\in \mathcal V_i} \mathbb A_v$$ be the union of $\mathbb A_v$'s for $i=1,2$ which are both $\dot G$-invariant. We now equip $\mathbb A_i$ with projection maps as the shortest projection maps between two quasi-lines in $\dot {\mathcal X_i}$ for $i=1,2$.

If $\gamma$ is a quasi-line in $\dot {\mathcal X_i}$ for $i=1,2$, denote by $\dot d_\gamma(x,y)$ the $|\cdot|_{\dot {\mathcal X_i}}$--diameter of the shortest projection of $x, y\in \dot {\mathcal X_i}$ to $\gamma$.

The following result  shows that the thick distance is captured by the projections of $\mathbb A_i$. Recall that $r$ is the radius of the hyperbolic cones in constructing $\dot {\mathcal X_i}$.
\begin{prop}\cite[Prop. 5.9]{NY20}\label{ConeoffDistFProp}
For any $x, y\in \dot{\mathcal X_i}$, the following holds
\begin{equation}\label{ConeoffDistFormulaEQ}
\begin{array}{cc}
  \bigl|x- y\bigr|_{\dot{\mathcal X_i}}^K \;\sim_{r, K} \; \sum_{\gamma\in \mathbb A_i } [\dot d_\gamma(x, y)]_{K} +   |\rho(x)- \rho(y)|_T.
\end{array}
\end{equation}
\end{prop}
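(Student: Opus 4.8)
Looking at Proposition~\ref{ConeoffDistFProp}, I need to prove that the $K$-thick distance on $\dot{\mathcal X_i}$ is captured (up to multiplicative/additive constants) by the sum of projections to quasi-lines in $\mathbb A_i$ plus the tree distance in $T$. Let me think about the structure.

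The key observation is that a path between $x, y \in \dot{\mathcal X_i}$ decomposes according to the tree geodesic $[\rho(x), \rho(y)]$: it passes through a sequence of pieces $\dot Y_v$ for $v \in [\rho(x),\rho(y)] \cap \mathcal V_i$, transiting through cone points between them. Within each piece $\dot Y_v$, the thick distance $d^{K}_{\dot{\mathcal X_i}}$ restricts to $d^{K}_{\dot Y_v}(x_v, y_v)$, and Lemma~\ref{ConeofYDistFormulaLem} already tells us this is comparable (with constant $N$) to $\sum_{\gamma \in \mathbb A_v}[\dot d_\gamma(x_v, y_v)]_K$ where projections are taken inside $\dot Y_v$. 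So the proof has two halves: (i) show the within-piece projection data in $\dot Y_v$ agrees coarsely with the projection data $\dot d_\gamma(x,y)$ computed in the ambient space $\dot{\mathcal X_i}$; and (ii) show that summing over all pieces $v \in [\rho(x),\rho(y)] \cap \mathcal V_i$ matches summing over all of $\mathbb A_i$, with the discrepancy absorbed into the $d_T(\rho(x),\rho(y))$ term.

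\begin{proof}[Proof plan]
The plan is as follows. First, fix the geodesic $[\rho(x),\rho(y)]$ in $T$ and list the vertices $v_0, v_1, \ldots$ in $[\rho(x),\rho(y)]\cap \mathcal V_i$ in order, with associated boundary-line pairs $(\ell_v^-, \ell_v^+)$ and orthogonal feet $(x_v, y_v)$ as in the definition of $d^K_{\dot{\mathcal X_i}}$. By construction of the coned-off space, any path in $\dot{\mathcal X_i}$ from $x$ to $y$ is a concatenation of subpaths, one traversing each $\dot Y_v$ between $\ell_v^-$ and $\ell_v^+$, interspersed with excursions into hyperbolic cones. Since a quasi-line $\gamma \in \mathbb A_v \subset \mathbb A_i$ lies entirely in $\dot Y_v$, and by the cone-off property (\cite[Lemma 5.14]{NY20}) the shortest projection to $\gamma$ in $\dot{\mathcal X_i}$ of anything outside $\dot Y_v$ factors through the cone point and hence has diameter $\le \theta$, I get that for $\gamma \in \mathbb A_v$, the ambient projection $\dot d_\gamma(x,y)$ differs from the in-piece projection $\dot d_\gamma(x_v, y_v)$ by a bounded error (coming only from the entry/exit lines $\ell_v^{\pm}$ and the $\theta$-bounded spillover). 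So $[\dot d_\gamma(x,y)]_K \sim_K [\dot d_\gamma(x_v, y_v)]_K$ for each such $\gamma$, once $K$ is large enough compared to $\theta$.

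Second, I need to handle quasi-lines $\gamma \in \mathbb A_v$ for vertices $v$ \emph{not} on $[\rho(x),\rho(y)]$, or on it but where the excursion is trivial. For $v$ off the geodesic, both $x$ and $y$ project (through the relevant cone point) to a bounded-diameter subset of $\gamma$, so $\dot d_\gamma(x,y) \le \theta < K$ and the term vanishes in $[\cdot]_K$. For the finitely many ``endpoint'' vertices $\rho(x), \rho(y)$ themselves (if they lie in $\mathcal V_i$) there is a bounded contribution. The remaining bookkeeping: the definition of $d^K_{\dot{\mathcal X_i}}$ sums $d^K_{\dot Y_v}(x_v,y_v)$ only over $v \in [\rho(x),\rho(y)]\cap\mathcal V_i$; applying Lemma~\ref{ConeofYDistFormulaLem} to each such $v$ converts each summand, up to the uniform constant $N$, into $\sum_{\gamma\in\mathbb A_v}[\dot d_\gamma(x_v,y_v)]_K$; combining with step one and the vanishing from step two, this equals $\sum_{\gamma\in\mathbb A_i}[\dot d_\gamma(x,y)]_K$ up to bounded error. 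Finally, each nonempty piece contributes at least a constant to either side's comparison (there may be pieces crossed with only a cone-excursion and no thick distance), and the number of pieces is $\asymp d_T(\rho(x),\rho(y))$; this is exactly why the additive $d_T(\rho(x),\rho(y))$ term must appear on the right, accounting for the tree-geodesic length that the thick distance does not see.

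The main obstacle I anticipate is step one: carefully controlling how the shortest-projection to a quasi-line $\gamma\subset\dot Y_v$, taken in the large ambient space $\dot{\mathcal X_i}$, relates to the projection taken in $\dot Y_v$ alone. One must use that $\dot Y_v$ is (uniformly quasiconvex, in fact) isometrically embedded except across cone points, that projections of cone excursions are $\theta$-bounded by \cite[Lemma 5.14]{NY20}, and that the feet $x_v, y_v$ on $\ell_v^{\pm}$ are within bounded distance of where an actual ambient geodesic enters and leaves $\dot Y_v$ --- the orthogonality in the definition of $(x_v,y_v)$ is what keeps this error uniform in $\delta$ and $r$. Since the statement is quoted from \cite[Prop.\ 5.9]{NY20} and all the ingredients (the distance formula Lemma~\ref{ConeofYDistFormulaLem}, the cone-off projection bound, and the thick-distance definition) are already in place, this is essentially an assembly argument, but the geometric estimate coupling ambient and in-piece projections is the part requiring genuine care.
\end{proof}
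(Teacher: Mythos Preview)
The paper does not supply its own proof of this proposition: it is stated with the citation \cite[Prop.~5.9]{NY20} and immediately followed by the outline of the next steps, so there is nothing in the paper to compare against. Your plan is a faithful reconstruction of the natural argument --- decompose the thick distance along the tree geodesic (which is the definition \eqref{ThickDistEQ}), apply Lemma~\ref{ConeofYDistFormulaLem} piecewise, use the $\theta$-bound from \cite[Lemma~5.14]{NY20} to identify in-piece with ambient projections and to kill contributions from quasi-lines off the geodesic, and absorb the accumulated additive errors into $d_T(\rho(x),\rho(y))$ --- and this is exactly the assembly the cited source carries out.
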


In the next subsection, we  construct a suitable finite subgroup of $G$ such that it acts isometrically on a finite product of quasi-trees $T_1,\cdots, T_n$ under some assumptions on vertex groups. This allows rewriting the right-hand side of the distance formula (\ref{ConeoffDistFormulaEQ}) as the product distance of $T_i$'s. 

\subsection{Isometric action of a suitable finite index subgroup of $G$}
\label{subsec:indexaction}
In a group, two elements  are \textit{independent} if they do not have conjugate powers   (see \cite[Def.~3.2]{Wise00}).

\begin{defn}
\label{defn:omnipotent}
A group $H$ is \textit{omnipotent} if for any non-empty set of pairwise independent   elements $\{h_1,\cdots, h_r\}$ ($r\ge 1$) there is a number $p\ge 1$ such that for every choice of  positive natural numbers $\{n_1,\cdots, n_r\}$, there is a finite quotient $H\to \hat H$  such that $\hat h_i$ has order $n_ip$ for each $i$.
\end{defn}


Let $G \curvearrowright X$ be a CKA action, where $G$ is the admissible  graph of groups $\mathcal G$ so that every vertex group $G_v$ is a central extension of an omnipotent hyperbolic group. By Lemma \ref{ActOnConeoffHypspaceLem}, the finite index subgroup  $\dot G$   acts on $\dot{\mathcal X_1}\times \dot{\mathcal X_2}\times T$ that is equipped with the $\dot G$-invariant function $|\cdot|_{\dot{\mathcal X_1}}^K\times |\cdot|_{\dot{\mathcal X_2}}^K \times |\cdot|_T$. The main result of this subsection is the following.  

\begin{prop}\label{OmnipotentProp}
The group $\dot G$ admits finitely many isometric actions on  quasi-trees $T_i$ for $1\le i\le n$ such that there exists a $\dot G$-equivariant quasi-isometric embedding from $\dot{\mathcal X_1}\times \dot{\mathcal X_2}\times T$ to $T_1\times T_2\cdots \times T_n\times T$. 
\end{prop} 

We emphasize here that $|\cdot|_{\dot{\mathcal X_1}}^K\times |\cdot|_{\dot{\mathcal X_2}}^K \times |\cdot|_T$ on the domain  for the quasi-isometric embedding is  not  a distance function, but the target is equipped with product distance. 



By  \cite[Theorem II.6.12]{BH99}, $G_v$ contains a subgroup $K_v$ intersecting ~trivially with $Z(G_v)$ so that the direct product $K_v\times Z(G_v)$ is a finite index subgroup. Thus, the image of $K_v$ in $G_v/Z(G_v)$ is of finite index in $H_v$ and $K_v$ acts geometrically on  hyperbolic spaces $\bar Y_v$. Since $H_v$ is omnipotent and then is residually finite, we can assume that $K_v$ is torsion-free.

Recall  the $\dot G$-invariant collection of quasi-lines in Subsection \ref{SSQuasilines}:
$$\mathbb A_i=\cup_{v\in \mathcal V_i} \mathbb A_v$$  where $\mathbb A_v$ is  the collection of quasi-lines  so that $d^K$-distance formula  holds by Lemma \ref{ConeofYDistFormulaLem}. By the residual finiteness of $K_v$, there exists a  finite index subgroup $\dot K_v$  so that $\mathbb A_v$ is  partitioned into $\dot {K_v}$-invariant sub-collections with    projection constants $\xi$.

To prepare the proof, we need to introduce a compatible condition of glueing finitely index subgroups. 
A collection of finite index subgroups $\{G_e', G_v': v\in \mathcal G^0, e\in \mathcal G^1\}$ is called \textit{compatible} if whenever $v = e_-$, we have
$$G_v \cap G_e' = G_v' \cap G_e.$$
By \cite[Theorem 7.51]{DK18},   a compatible collection of finite index subgroups gives a finite index subgroup of $G$. The following result says that upon taking finite index subgroups, we can assume that each vertex group is a direct product in a CKA group.

\begin{lem}\label{FindexSubgrpsLem}
Let $\{\dot K_v<K_v: v\in \mathcal G^0 \}$ be a   collection of finite index subgroups. Then there exist   finite index subgroups $\ddot K_v$ of $\dot K_v$, $G_e'$ of $G_e$ and $Z_v$ of $Z(G_v)$ so that the  collection of finite index subgroups $\{G_e', G_v'=\ddot K_v\times Z_v: v\in \mathcal G^0, e\in \mathcal G^1\}$ is compatible.
\end{lem}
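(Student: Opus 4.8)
The plan is to build the compatible collection in stages, handling the edge groups and the central directions simultaneously so that the compatibility equation $G_v\cap G'_e=G'_v\cap G_e$ can be verified at each edge.

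\textbf{Step 1: Normalize the edge groups relative to the splitting $G_v=H_v\times Z(G_v)$.} Fix a vertex $v$ and an oriented edge $e$ with $e_-=v$. The image $K_e\subset G_v$ of $G_e$ is an abelian group of rank $2$ containing a finite-index copy of $Z(G_v)$; under $G_v=H_v\times Z(G_v)$ its projection to $H_v$ is the virtually cyclic group $\bar K_e$ from Lemma~\ref{MalnormalEdgegroupsLem}. First I would pass to a finite-index $Z_v<Z(G_v)$ and finite-index subgroups inside each $K_e$ for $e\in St(v)$ so that, after intersecting, $K_e\cap (H_v\times Z_v)$ splits as a direct product of a cyclic subgroup of $\bar K_e$ and $Z_v$. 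This is possible because $Z(G_v)$ is central, hence the commutator pairing is trivial and finite-index abelian subgroups of $\mathbb Z^2$ containing a fixed finite-index $\mathbb Z$ can be chosen to split along it. Doing this $G$-equivariantly over the finitely many orbits of vertices and edges gives a first candidate $G'_e:=\bigl(\text{cyclic subgroup of }\bar K_e\bigr)\times Z_v$, and the same $Z_v$ works from both ends of $e$ by axiom~(4) of Definition~\ref{defn:admissible} after passing to a common finite-index subgroup of $Z(G_{v_1})$ and $Z(G_{v_2})$.

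\textbf{Step 2: Shrink the hyperbolic directions using omnipotence.} Now intersect $\dot K_v$ with the previously chosen data to get a finite-index subgroup of $K_v$, and project its intersections with the $K_e$ ($e\in St(v)$) into $H_v$; these are finitely many infinite cyclic subgroups $\gen{c_e}$ of $\bar K_e$. By the non-commensurability axiom~(3), distinct $\bar K_e$'s are non-commensurable, so their generators are pairwise independent in $H_v$ in the sense of Definition~\ref{defn:omnipotent}. Applying omnipotence of $H_v$ (and of $K_v$, which is finite-index in $H_v$, hence also omnipotent — or directly invoking separability of each $\gen{c_e}$) I would produce a finite-index torsion-free subgroup $\ddot K_v<\dot K_v$ whose intersection with each $\gen{c_e}$ is a prescribed power $\gen{c_e^{p n_e}}$, where the exponents $p n_e$ are chosen uniformly so that they match the cyclic part already fixed in $G'_e$ from Step~1. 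Omnipotence is exactly what lets us control \emph{all} of these cyclic intersections with a single finite quotient, so that the resulting $G'_e$ is a genuine finite-index subgroup of $G_e$ (not just of some $K_e$) and agrees with $\ddot K_v\times Z_v$ intersected with $K_e$.

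\textbf{Step 3: Verify compatibility and assemble.} Set $G'_v:=\ddot K_v\times Z_v$ and keep $G'_e$ as above. For each edge $e$ with endpoints $v=e_-$, $w=e_+$, I would check $G_v\cap G'_e=G'_v\cap G_e$ and the symmetric identity at $w$: the left side is $G'_e$ by construction (as $G'_e<G_e<G_v$), and the right side is $(\ddot K_v\times Z_v)\cap K_e$, which Steps~1--2 arranged to equal $G'_e$. Then \cite[Theorem 7.51]{DK18} produces a finite-index subgroup of $G$, and taking the $\ddot K_v$ as the vertex data gives the statement.

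\textbf{Main obstacle.} The delicate point is making the choices $G$-equivariantly and consistently across edges that are shared by two different vertex groups: the power to which $\gen{c_e}$ is cut in $H_v$ must be compatible with the power to which the partner cyclic subgroup is cut in $H_w$ \emph{and} with the common central subgroup $Z_v\cong Z_w$ forced by axiom~(4). This is precisely where omnipotence (as opposed to mere subgroup separability) is needed — it supplies a uniform exponent $p$ allowing one to prescribe all the orders simultaneously — and coordinating these uniform exponents over all finitely many edge-orbits is the technical heart of the lemma.
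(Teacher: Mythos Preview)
Your overall strategy matches the paper's: use omnipotence of each $H_v$ to produce a finite quotient in which every edge-loxodromic $b_e$ has prescribed order, take $\ddot K_v$ as the pullback of the kernel intersected with $\dot K_v$, and deduce that $\ddot K_v\cap K_e$ is exactly the desired cyclic subgroup. You have also correctly located the crux as the two-sided consistency of $G'_e$. However, Step~1 contains a genuine confusion: you speak of ``passing to a common finite-index subgroup of $Z(G_{v_1})$ and $Z(G_{v_2})$'' so that ``the same $Z_v$ works from both ends.'' But $Z(G_v)$ and $Z(G_w)$ are \emph{linearly independent} rank-one subgroups of $G_e\cong\mathbb Z^2$ --- axiom~(4) says they together generate a finite-index subgroup, not that they are commensurable --- so no common finite-index subgroup exists. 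The paper resolves this with a device you are circling around but do not state: it fixes a single integer $s$ (divisible by an $m$ with $\hat b_e^m\in\dot K_v$ everywhere and by all the omnipotence constants $p_v$) and sets $G'_e:=\langle\hat f_v^s,\hat b_e^s\rangle$. Since $\{\hat f_v,\hat b_e\}$ and $\{\hat f_w,\hat b_{\bar e}\}$ are both bases of $\mathbb Z^2$, this subgroup is just $s\cdot G_e$ and hence equals $\langle\hat f_w^s,\hat b_{\bar e}^s\rangle$ automatically; the two-sided consistency is for free, while $Z_v=\langle\hat f_v^s\rangle$ and $Z_w=\langle\hat f_w^s\rangle$ remain distinct subgroups of $G'_e$.

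Your Step~3 verification is also too brief. The inclusion $G'_e\subset G'_v\cap K_e$ is clear; for the reverse, write $g\in G'_v\cap K_e$ as $\hat f_v^{sm}k$ with $k\in\ddot K_v$, so that $\pi_v(k)=b_e^l$ for some $l$. It is the fact that $\xi_v(b_e)$ has order \emph{exactly} $s$ (not merely dividing $s$) that forces $s\mid l$ and hence $k=\hat b_e^{ns}\in G'_e$. Mere separability of $\langle c_e\rangle$ --- your parenthetical fallback --- only lets you make the intersection small, not hit a prescribed index on the nose, and that precision is what the compatibility equation demands.
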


Assuming Lemma \ref{FindexSubgrpsLem}, we now complete the proof of Proposition \ref{OmnipotentProp}.  
\begin{proof}
We pass to further finite index subgroups $\ddot K_v<\dot K_v$ satisfying  compatible conditions, which then gives  a further index subgroup $\ddot G\subset \dot G$. For $i=1,2$, let us partition $\mathbb A_i=\cup_{k=1}^{n_i} \mathbb A_k^i$ into $\ddot G$-obits $\mathbb A_k^i$. By the construction of $\ddot G$, we know that $\ddot G$ intersects each vertex group $G_v$ of the Bass-Serre tree in a  (conjugate) subgroup $\ddot K_v$. Thus, for each $k$, $\mathbb A_k^i$  are the union of  certain $\ddot {K_v}$-invariant sub-collections where $v$ are varied in $\mathcal V_i$.  

Recall that $\mathbb A^i$ for $i=1,2$ satisfies the projection axioms with a uniform projection constant $\xi$  in Subsection \ref{SSQuasilines}. We can then build the quasi-trees $T_k^i:=\mathcal C_K(\mathbb A_k^i)$ where $1\le k\le n_i$. Setting  $n=n_1+n_2$, this thus yields isometric group actions of $\ddot G$ on quasi-trees $T_{i}$ $(1\le i\le n)$.

We first construct a $\ddot G$-equivariant   map $\Phi$ from $\dot{\mathcal X_1}\times \dot{\mathcal X_2}\times T$ to $T_1\times T_2\cdots \times T_n\times T$. By equivariance, it suffices to fix a basepoint in each $\mathcal X_1, \mathcal X_2, T$ and $T_i$'s so that $\Phi$ sends basepoints to basepoints. The quasi-isometric embedding property follows from  the distance formula (\ref{ConeoffDistFormulaEQ}), where the right-hand side is now replaced by the distance in the corresponding quasi-trees.

Note that $\ddot G$ is of finite index in $\dot G$. By taking more copies of quasi-trees $T_i$ in the target, the map $\Phi$ can be made $\dot G$-equivariant. Indeed, if a finite index subgroup $H\subset G$ acts on some space $X$ then $G$ acts on a finite product of $[G: H]$ copies of $X$ without preserving the factors. The map $\Phi$ can be extended to these copies as well. The proof of the Proposition is complete.
\end{proof}

We now give the proof of Lemma \ref{FindexSubgrpsLem}.
\begin{proof}[Proof of Lemma \ref{FindexSubgrpsLem}]
Assume that $\langle f_v\rangle =Z(G_v)$ for any $v\in \mathcal G^0$. Then for an oriented edge $e=[v,w]$ from $v$ to $w$, the subgroup   $\langle f_v, f_w\rangle$ is of finite index in $G_e$.

Note that $G_e\cong\mathbb Z^2$ admits a base $\{\hat f_v, \hat b_e\}$ where $\hat f_v$ is primitive so that $f_v$ is some power of $\hat f_v$.  Let $\pi_v: G_v\to H_v=G_v/Z(G_v)$. Thus,   $\pi_v(G_e)$ is a direct product of a torsion group with $\langle b_e\rangle$ in $H_v$, where $b_e=\pi_v(\hat b_e)$ is a loxodromic element.

Similarly,  let $\hat f_w, \hat b_{\bar e}\in G_e$ such that  $\langle \hat f_w, \hat b_{\bar e}\rangle=G_e$. Keep in mind that for any integer $n\ne 0$, $\langle \hat f_v^n, \hat b_e^n\rangle=\langle \hat b_{\bar e}^n, \hat f_w^n\rangle$  is of finite index in $G_e$.

We choose an integer $m\ne 0$ such that $\hat b_e^m\in \dot K_v$ for every vertex $v\in \mathcal G^0$ and every oriented edge $e$ from $e_-=v$. Such an integer $m$ exists since $\dot K_v$ injects into $H_v$ as a finite index subgroup, and $\mathcal G$ is a finite graph of groups.

Apply the omnipotence of $H_v$ to the independent set of elements $\{ b_e: e_-=v\}$. Let $p_v$ be the constant given by Definition~\ref{defn:omnipotent}.
Set
$$
s:=m\prod_{v\in \mathcal G^0} p_v
$$
Set $l_v=s/p_v$.  Thus, for the collection $\{ b_e: e_-=v\}$, there exists a finite quotient $\xi_v: H_v\to \bar H_v$ such that $\xi_v( b_e)$ has order $s=l_vp_v$ and $b_e^s\in \ker (\xi_v)$.  Then $\ddot K_v:=\dot K_v\cap \pi_v^{-1} \ker(\xi_v)$ is of finite index in $\dot K_v$. Recall that $\pi_v|_{K_v}: K_v\to H_v$ is injective (see the paragraph before Lemma \ref{FindexSubgrpsLem}). Since  $ \pi_v(\hat b_e^s) = b_e^s$ is loxodromic in $H_v$ and $\hat b_e^s\in\dot  K_v$ for $m|s$, we have $\hat b_e^s$ is a  loxodromic element in $\ddot K_v$.

For each oriented edge $e=[v,w]\in \mathcal G^1$, define $G_v':=\langle \hat f_v^{s}\rangle \times \ddot K_v$, $G_w':=\langle \hat f_w^{s}\rangle \times \ddot K_w$ and $G_e':=\langle \hat f_v^{s},  \hat b_e^s\rangle=\langle \hat b_{\bar e}^s,  \hat f_w^{s}\rangle<G_v'$.

Let  $g\in G_e\cap G_v'$ be any element so we can write   $g=\hat f_v^{sm} k$ for some $m\in \mathbb Z$ and $k\in \ddot K_v$. Recall that $\pi_v(G_e)$ is a direct product of $\langle b_e\rangle$ and a torsion group, and $\ddot K_v$ is torsion-free. So $\pi_v(g) =\pi_v(k)\in \pi_v(G_e)\cap \pi_v(\ddot K_v)$ is some power of $b_e$: $\pi_v(k)=b_e^l$ for some $l\in \mathbb Z$. Note that $b_e^l=\pi_v(k)\in \ker (\xi_v)$ so  the omnipotence implies that $s|l$, i.e. $l=ns$ for some $n\in \mathbb Z$.  Since $b_e=\pi_v(\hat b_e)$ and $\pi_v: \ddot K_v\to H_v$ is injective, we obtain that $k=\hat b_e^{ns}$. Therefore, $g=\hat f_v^{sm}\hat b_e^{ns}\in G_e'$ which implies
$$G_v \cap G_e' = G_v' \cap G_e.$$

Therefore, the collection $\{G'_v, G'_e \,\,| \, v \in \mathcal{G}^{0}, e \in \mathcal{G}^{1} \}$ is verified to be  compatible.
\end{proof}

\subsection{Outline of the proof of Theorem~\ref{QTCKAThm}}
 \label{OutlineProofSSec}
Let $G \curvearrowright X$ be a CKA action where $G$ is the fundamental group of an admissible graph of groups $\mathcal{G}$ such that for every vertex group the central extension (\ref{centralExtEQ}) has omnipotent hyperbolic quotient group. Recall that property (QT) is preserved undertaking finite index subgroups (see Lemma~\ref{lem:passfiniteindex}).  Upon passing to further index subgroups in Lemma \ref{FindexSubgrpsLem}, we can assume that $G_v=H_v\times \mathbb Z$, where $H_v$ acts geometrically on $\bar Y_v$ and also we can assume $\dot G = G$.  To show the property (QT) of $G$, we must  find not only a suitable action on a finite product of  quasi-trees, but also ensure the distance of points in the image can recover word distance in the ambient group.
We briefly describe here the strategy of the proof. Details are performed in Section~\ref{FiberLineSystemSec} and Section~\ref{CKADistFormulaSec}.

Thanks to Proposition~\ref{OmnipotentProp}, we know that there exists a $G$-equivariant quasi-isometric embedding (note that $\dot G = G$) $$\dot{\mathcal X_1}\times \dot{\mathcal X_2}\times T \to T_1\times T_2\cdots \times T_n\times T$$ Here  $T_i$ (with $i \in \{1, 2, \cdots, n \}$) is a quasi-tree. As the geometry 
of space $\dot{\mathcal X_1}\times \dot{\mathcal X_2}\times T$ does not capture the distance from vertical parts of $X$, there is no way finding a quasi-isometric embedding from the orbit $Go$ to $\dot{\mathcal X_1}\times \dot{\mathcal X_2}\times T$. To overcome this obstacle, in Section~\ref{FiberLineSystemSec}, we will construct two additional quasi-trees, denoted by $\mathcal{C}_{K}(\mathbb{F}_1)$ and $\mathcal{C}_{K}(\mathbb{F}_2)$, and will show that there is indeed a $G$-equivariant quasi-isometric embedding $$\Phi \colon G o \to \mathcal{C}_{K}(\mathbb{F}_1) \times \mathcal{C}_{K}(\mathbb{F}_2) \times \dot{\mathcal X_1}\times \dot{\mathcal X_2}\times T$$ (Section~\ref{CKADistFormulaSec} is devoted to constructing $\Phi$ and verifying $G$-equivariant quasi-isometric embedding of $\Phi$). As a consequence, we obtain the desirable $G$--equivariant quasi-isometric embedding
$$
Go \to \mathcal{C}_{K}(\mathbb{F}_1) \times \mathcal{C}_{K}(\mathbb{F}_2) \times T_1\times T_2\cdots \times T_n\times T
$$ which entails property (QT) of $G$.





\section{Projection system of fiber lines}
\label{FiberLineSystemSec}

Recall we partition $T^0=\mathcal V_1\cup \mathcal V_2$ where $\mathcal V_i$ consists of vertices in $T$ with pairwise even distances. For convenience, we sometimes write $\mathcal V=\mathcal V_1$ and $\mathcal W=\mathcal V_2$.  
We note that property (QT) of a group is preserved under taking a finite index subgroup (see Lemma~\ref{lem:passfiniteindex}). Thus passing to a finite index subgroup (see Lemma \ref{lem:index2subgroup}) if necessary we could assume that $G$ is torsion-free and preserves $\mathcal{V}_i$ with $i = 1,2$. 

Note that $e=[w, v]$ is an oriented edge from $w$ towards $v$, and  $\bar e=[v,w]$   the oriented   edge from $v$ towards $w$.  For each oriented edge $e$, let $F_e$ be the corresponding boundary plane. It is clear that  $F_e=F_{\bar e}$ does not depend on the orientation.

\subsection{Desired quasi-lines}


By   Lemma  \ref{defn:vertexedgespace}, the CKA space $X$ decomposes as the union of vertex spaces $\tilde Y_v=N_D(Y_v)$ for $v\in T^0$, on which the vertex groups $G_v$ act geometrically. The center $Z(G_v)\simeq \mathbb Z$ allows to split $Y_v$ as a metric product $\bar Y_v\times \mathbb R$.  Upon passing to further finite index subgroups in Lemma \ref{FindexSubgrpsLem}, we can assume that $G_v=H_v\times \mathbb Z$, where $H_v$ acts geometrically on $\bar Y_v$. 
If the CKA action $G\curvearrowright X$ is not flip (as in \cite{NY20}), the system of fiber lines $\mathbb R$ in $Y_v=\bar Y_v\times \mathbb R$ does not  behave well with respect to the $G$-action. Following the recent work \cite{HRSS22}, we  introduce a better geometric model for vertex subgroups $G_v$, still as the metric product of $\bar Y_v$ with a quasi-line, to resolve the $G$-action on the original fiber lines.

We first explain the construction of the quasi-line obtained from a quasi-homeomorphism. The following lemma is cited from Lemma~4.2 and the proof of the Corollary~4.3 in \cite{HRSS22}.  We present their proof   as it is short and crucial for our discussion. 

\begin{lem}
\label{lem:HRSS22}
Let  $H$ be a hyperbolic group relative to a finite collection of virtually cyclic subgroups $\{E_i: 1\le i\le n\}$. Consider $G=H\times \Z$ and fix a set of elements $c_i\in E_i\times \Z$ for each $1\le i\le n$ such that $\langle c_i\rangle$ has unbounded projection to $E_i$. Then there exist  a generating set ${S}$ of $G$ and a   $(\lambda,\lambda)$-quasi-isometry $\varphi: \Cay(G, {S})\to \R$ such that  the following holds.

\begin{enumerate}
 
    \item 
    If $g\langle c_i\rangle, g'\langle c_i\rangle$ are two $\langle c_i\rangle$-cosets for $g, g'\in E_i\times \Z$, then 
    $$
    \lambda^{-1} |g\langle c_i\rangle-g'\langle c_i\rangle|_G-\lambda \le |\varphi(g\langle c_i\rangle)-\varphi(g'\langle c_i\rangle)|
    \le \lambda |g\langle c_i\rangle-g'\langle c_i\rangle|_G +\lambda$$
    where $|g\langle c_i\rangle-g'\langle c_i\rangle|_G$ denotes the distance between two subsets in $G$ equipped with a word metric relative to a finite generating set (so not the distance on $\Cay(G, {S})$).
    \item 
    With the natural action of $G\to H$, the diagonal action of $G=H\times \Z$ on $H \times \Cay(G, {S})$ is metrically proper and cobounded, where   $\Z\subset G$ acts loxodromically on $\Cay(G, {S})$ but $\langle c_i\rangle $ acts boundedly.
\end{enumerate}
\end{lem}

In applications, the choice of elements $c_i$ shall come from the fiber generator of the adjacent pieces. See Lemma \ref{lem:boundedfiberline} below.

\begin{proof}
Let $\pi_H: G=H\times \mathbb Z \to H$ and $\pi_\Z: G=H\times \mathbb Z \to \mathbb Z$ be the natural projections. Let $t_i=\pi_H(c_i)\in E_i$ be the projection to $H$ of the  element $c_i$. We then choose a quasi-homomorphism $\phi_i: H\to\mathbb R$ by \cite{HO} such that $\phi_i(t_i)=1$  but $\phi_i(E_k)=0$ if $E_k\ne E_i$. Define the  quasi-homomorphism of $G\to \R$ as follows:  for any $x\in G,$ $$\varphi(x) :=\pi_\Z(x) -\sum_{i=1}^n \pi_\Z(c_i) \cdot \bigl(\phi_i\circ \pi_H(x)\bigr)$$  By definition, $\varphi$ takes the   constant value on each $\langle c_i\rangle$-cosets. Moreover, the distance $|g\langle c_i\rangle-g'\langle c_i\rangle|_G$  is bi-Lipschitz to $|\varphi(g \langle c_i\rangle)- \varphi( g'\langle c_i\rangle)|$ with a  constant depending only on $\langle c_i\rangle$. 

To find the generating set $ S$, notice that   the homogenization   of $\varphi$ (still denoted by $\varphi$)   has a bounded distance to the original one. As $\varphi$ is unbounded, there exists $h\in G$ so that $\{\varphi(h^n)=n\varphi(h):n\in \mathbb Z\}$ is an infinite cyclic subgroup. Denote $ S:=\varphi^{-1}([0, 2\varphi(h)])$ a (possibly infinite) subset   of $G$. One can prove that   $ S$   generates $G$, and $\varphi: G\to\R$ induces a desired quasi-isometry $\varphi: \Cay(G, {S})\to\R$. See    \cite[Lemma 4.15]{ABO} for details.
\end{proof}


\subsection{New geometric model for vertex spaces}
\label{sub:basicfacts}
Recall that $G$ acts on the  Bass-Serre tree $T$ with finitely many vertex orbits. Let $\{v_1, v_2, \cdots, v_n\} \subset T$ be the full set of vertex representatives, and let $S_{v_i}$ be the (infinite) generating set  for $G_{v_i}$ given by Lemma~\ref{lem:HRSS22}. Then $G_{v_i}$ acts on the quasi-line $\mathfrak{fl}(v_i) :=\Cay(G_{v_i}, S_{v_i})$.  Let $v$ be an arbitrary vertex in $T$, so that $v = gv_i$ for some $g \in G$ and $i \in \{1, 2, \cdots, n \}$. By equivariance, we define the quasi-line $\mathfrak{fl}(v) :=g \mathfrak{fl}(v_i) = g \Cay(G_{v_i}, S_{v_i}) $, and the action of $G_{gv_{i}} = g G_{v_i} g^{-1}$ on $g \mathfrak{fl}(v_i)$ is induced from the action of $G_{v_i}$ on $\mathfrak{fl}(v_i)$.

Consider the word metric on $G$ given by a finite generating set of $G$ including a finite generating set of $G_{v_i}$ for each representative vertex $v_i$. 
Equipping each vertex group $G_v$  with a word metric,  the inclusion of $G_v$ into $G$ is a quasi-isometric embedding since $Y_v$ is quasi-isometric embedded in the CAT(0) space $X$. 

Write $X_v:=\bar{Y}_{v} \times \mathfrak{fl}(v)$ for the new geometric model for $G_v$. By Lemma~\ref{lem:HRSS22}, the diagonal action  $G_{v} \curvearrowright X_v$ is metrically proper and cobounded, and hence the induced orbital map $$G_v\longrightarrow G_vo'\subset X_v$$ is a $G_v$-equivariant quasi-isometry for any basepoint $o'=(o'_1,o_2')\in X_v$.

Let us fix a basepoint $o=(o_1,o_2)\in Y_v$. As $G_v$ acts freely and geometrically on $Y_{v} = \bar{Y}_{v} \times \R$, 
let  $$G_vo \longrightarrow G_v$$ be  a bijective $G_v$-equivariant quasi-isometry, a quasi-inverse to the orbital map of $G_v \curvearrowright Y_v$. 

Choose the same first coordinate $o_1=o_1'$ for the above basepoints $o, o'$.
Define a $G_v$-equivariant map   $\Lambda_v: {Y}_{v} \to X_v$ as the composite of the above two $G$-equivariant maps
\[
 \Lambda_v:\quad Y_{v} = \bar{Y}_{v} \times \R  \longrightarrow G_v \longrightarrow X_v= \bar{Y}_{v} \times \mathfrak{fl}(v).
\]
Define the horizontal and vertical projection maps
\begin{equation}\label{HorVerDefn}
\Lambda_v^{\hor} \colon Y_v \to \bar{Y}_v,\quad \Lambda_v^{\ver}: Y_v \to  \mathfrak{fl}(v)  
\end{equation} as the  composites  of the map $\Lambda_v$ with the   projections to the factor $\bar{Y}_v$ and $\mathfrak{fl}(v)$ respectively.  For the product space $X_v = \bar{Y}_v \times \mathfrak{fl}(v)$, we define similarly the   horizontal distance and vertical  distances  $|\cdot|^{\hor}_{X_v}$ and $|\cdot|^{\ver}_{X_v}$. In terms of these notations, we have  for any $x,y\in Y_v$:
$$
\bigl|\Lambda_{v}(x)-\Lambda_v(y)\bigr|_{X_v}^{\hor}=\bigl|\Lambda_{v}^{\hor}(x)-\Lambda_v^{\hor}(y)\bigr|_{\bar{Y}_v}$$
$$\bigl|\Lambda_{v}(x)-\Lambda_v(y)\bigr|_{X_v}^{\ver}= \bigl|\Lambda_{v}^{\ver}(x)-\Lambda_v^{\ver}(y)\bigr|_{\mathfrak{fl}(v)} 
$$
We now derive a few    important facts   from Lemma~\ref{lem:HRSS22} about   $\Lambda_v$.

\begin{lem}
\label{lem:boundedfiberline}
There exists a uniform constant $\lambda >0$ such that $  \Lambda_v$ is a $(\lambda, \lambda)$--quasi-isometry: for any $x, y \in Y_v$ then
$$
\frac{1}{\lambda} \bigl|\Lambda_v(x)- \Lambda_{v}(y)\bigr|_{X_v} - \lambda \le \bigl |x-y\bigr|_{Y_v} \le \lambda \bigl|\Lambda_v(x)- \Lambda_{v}(y)\bigr|_{X_v}+\lambda
$$
Moreover, let $Y_w$ be the adjacent piece of $Y_v$ in the CKA space $X$. Let $\ell,\ell'$ be a line in the plane $P  = Y_v \cap Y_w$ such that $\ell, \ell'$ are  fibers in $Y_w$. Then
\begin{enumerate}
    \item 
\label{lem:boundedfiberline1}
$\diam (\Lambda_{v}^{\ver}(\ell)) \le \lambda
$. In other words, $ \Lambda_{v}(\ell) \subset \bar{Y}_v \cap B(a, \lambda)$ in $\bar{Y}_v \times \mathfrak{fl}(v)$ for some $a \in \mathfrak{fl}(v)$.
\item 
\label{lem:boundedfiberline2}
Let $p  \in Y_v = \bar{Y}_v \times \R$ be any point and    $\pi_v(p)$ be the projection of $p$ into the factor $\bar{Y}_v$.  Then $\bigl | \pi_v(p)  - \Lambda_v^{\hor}(p) \bigr |_{\bar{Y}_v} \le \lambda$. 
\item
\label{lem:boundedfiberline3}
Denote by $\bigl | \ell - \ell' \bigr |_{Y_v}$   the distance of $\ell$ and $\ell'$ in $Y_v$. Then
\[
\lambda^{-1} \bigl | \ell - \ell' \bigr |_{Y_v} - \lambda \le \diam_{\mathfrak{fl}(v)} \bigl (\Lambda_{v}^{\ver}(\ell) \cup \Lambda_{v}^{\ver}(\ell') \bigr ) \le \lambda \bigl | \ell - \ell' \bigr |_{Y_v} + \lambda
\]
\end{enumerate}
\end{lem}

\begin{proof}
We first prove (\ref{lem:boundedfiberline2}). Choose the fixed basepoints $o = (o_1, o_2)$  in $Y_v$ and $o'=(o_1',o_2')$ in $\bar{Y}_v \times \mathfrak{fl}(v)$ so that their projections into the factor $\bar{Y}_v$ are the same:  $o_1=o'_1 \in \bar{Y}_v$.  Take any point $p = (a, t)$ in $Y_v = \bar{Y}_v \times \R$, so $\pi_v(p)=a$. By our definition of the $G_v$--equivariant quasi-inverse $Y_v \to G_v$, there exists a group element $g \in G_v$ so that $|go - p|_{Y_v} \le \lambda$ for some uniform constant $\lambda$. We write $g = (h,n)$ in $H_v \times \Z$. Note that $G_v$ acts on $\bar{Y}_v \times \mathfrak{fl}(v)$ diagonally, thus the image of the group element $g = (h,n)$ under the composition map $$G_{v} \to \bar{Y}_v \times \mathfrak{fl}(v) \to \bar{Y}_v$$ is $h\cdot o_1$, where the first one is the orbital map and the second one is the projection map.
If  $Y_v$ is equipped with   $L^1$-metric,  it follows that $|ho_1 - a|_{\bar Y_v}\le |g o - p|_{Y_v} \le \lambda$. As the map $\Lambda_v$ descends to the map $\bar{Y}_v \to \bar{Y}_v$ sending $a$ to $h(o_1)$. Our claim is confirmed.
$$
 \bigl|\Lambda_{v}^{\hor}(x)-\Lambda_v^{\hor}(y)\bigr|_{\bar{Y}_v} - \lambda \le d^{h}(x,y)  \le \lambda + \bigl|\Lambda_{v}^{\hor}(x)-\Lambda_v^{\hor}(y)\bigr|_{\bar{Y}_v}
$$

For the part (\ref{lem:boundedfiberline1}),  as there are only finitely many isometric types of $Y_v$ of $X$, we only need to prove   that  $\diam (\Lambda_{v}^{\ver}(\ell)) \le \lambda$ for one given $Y_v$.
Indeed, recall  that $\Lambda_{v}^{\ver}: Y_v \to   \mathfrak{fl}(v)$ factors through  $Y_v \to X_v = \bar{Y}_{v} \times \mathfrak{fl}(v)$ as the natural projection $X_v \to \mathfrak{fl}(v)$. The latter      agrees with  the quasi-homomorphism $\varphi: G_v\to \R$ up to a bounded error in the proof of Lemma~\ref{lem:HRSS22},   vanishing  on the center $Z(G_w)$. If $B(a, \lambda)$ denotes the ball at some element $a \in \mathfrak{fl}(v)$ with radius $\lambda$, it follows that $Z(G_w) o \subset \bar{Y}_v \times B(a, \lambda)$. Every fiber line $\ell$ in $Y_w$ lies in a uniform neighborhood of the orbit of a
$Z(G_w)$-coset. Our second claim is thus verified.

The part~(\ref{lem:boundedfiberline3}) is clear from our construction.
\end{proof}

\subsection{Projection maps} 
Recall  $T^0=\mathcal V_1\cup \mathcal V_2$ where $\mathcal V_i$ consists of vertices in $T$ with pairwise even distances.  
Denote $\mathbb F_1=\{\mathfrak {fl}(v): v\in \mathcal V_1 \}$ and $\mathbb F_2=\{\mathfrak {fl}(w
): w\in \mathcal V_2 \}$.  It remains to define a family of projection maps for them.

\begin{defn}[Projection maps in $\mathbb{F}_i$]
\label{defn:projectionmap}
Let  $e_1=[v,w]$, $e_2=[w,v_2]$ denote the first two (oriented) edges in $[v,v']$.
Let $F_{e_1} = Y_v \cap Y_w$ and $F_{e_2} = Y_{v_2} \cap Y_w$ be the two boundary planes of $Y_w$.
Let $\mathcal{S}_{e_1e_2}$ be the strip in $Y_w$ joining two boundary plane $F_{e_1}$ and $F_{e_2}$ of $Y_w$ (see Section~\ref{subsec:strip} for the definition of strips). We note that $\mathcal{S}_{e_1e_2} \cap F_{e_1}$ is a line in $F_{e_1}$ that is parallel to a fiber in $Y_w$. We then define  \textit{projection from $\mathfrak{fl}(v')$ into $\mathfrak{fl}(v)$} to be
\[
\Pi_{\mathfrak{fl}(v)} (\mathfrak{fl}(v') : = \Lambda_{v}^{\ver} \bigl ( \mathcal{S}_{e_1e_2} \cap F_{e_1} \bigr )
\]
where $\Lambda_v^{\ver}$ defined in  (\ref{HorVerDefn}) is the vertical  projection to the quasi-line in $X_v=\bar Y_v\times \mathfrak{fl}(v).$
\end{defn}

\begin{lem}
\label{lem:easy1}
Let $\lambda >0$ be the constant given by Lemma~\ref{lem:boundedfiberline}.
Let $ {a},  {b}, {c}$ be distinct vertices in $ \mathcal V_i$ with $i = 1,2$. If $d_{T}( a, [b, c]) \ge 2$ then 
 $
 \Pi_{\mathfrak{fl}(a)}(\mathfrak{fl}(c)) =  \Pi_{\mathfrak{fl}(a)}(\mathfrak{fl}(b)) \le \lambda 
 $
\end{lem}
\begin{proof}
Let $[b, a]$ and $[c,a]$ be the geodesics in the tree $T$ connecting $b$ and $c$ to $w$ respectively. Let us denote $e \cdot e'$ be the last two edges in $[b,a]$ (that is also the last two edges in $[c,a]$). 
Let $\mathcal{S}_{ee'}$ be the strip in $Y_{e_{+}}$ connecting two boundary planes $F_e$ and $F_{e'}$ of $Y_{e_{+}}$
 By our definition of projection maps, we have that $
 \Pi_{\mathfrak{fl}(a)}(\mathfrak{fl}(c)) =  \Pi_{\mathfrak{fl}(a)}(\mathfrak{fl}(b)) = \Lambda_{a}^{\ver}(\mathcal{S}_{ee'} \cap P_{e'}) \le \lambda$.
 \end{proof}

\subsection{Projection axioms}
We are now going to verify that
$\mathbb{F}_i$ ($i = 1, 2$) with the above-defined projection maps in Definition~\ref{defn:projectionmap} satisfy the projection axioms (see Definition~\ref{defn:projaxioms}).
For each vertex $v \in T$, let $\mathbb{L}_v$ be the collection of boundary lines in the hyperbolic space $\bar{Y}_v$ defined at the beginning of Section~\ref{ConeoffSpaceSec}. Let $\ell_1, \ell_2$, and $\ell_3$ be three distinct boundary lines in $\mathbb L_v$. We denote 
\[
d_{\ell_1}( \ell_2, \ell_3)  = \diam \bigl (\pi_{\ell_1}(\ell_2) \cup  \pi_{\ell_1}(\ell_3) \bigr )
\]
where $\pi_{\ell_i}(\ell_j)$ is the shortest projection of $\ell_j$ to $\ell_i$ in the CAT(0) hyperbolic space $\bar{Y}_v$ (note that $\bar{Y}_v$ is a hyperbolic space since $H_v$ acts geometrically on $\bar{Y}_v$ and $H_v$ is a non-elementary hyperbolic group).
Recall that 
$$
d_{\mathfrak{fl}(v_1)} (\mathfrak{fl}(v_2), \mathfrak{fl}(v_3)) :=\diam \bigl (\Pi_{\mathfrak{fl}(v_1)} (\mathfrak{fl}(v_2)) \cup \Pi_{\mathfrak{fl}(v_1)} (\mathfrak{fl}(v_3)) \bigr ).
$$

\begin{lem}
\label{lem:easy3}
There exists a uniform constant $\lambda >0$ such that the following holds. Let $v_1, v_2, v_3$ be distinct vertices in $\mathcal{V}_1$ such that $v_1, v_2, v_3$ are in $\Lk(o)$ for some vertex $o$ in $\mathcal{V}_2$. Let $e_i$ denote the edge $[v_i, o]$ with $i =1,2,3$ and let $F_{e_i}$ be the plane in $X$ associated to $e_i$. For each $i = 1,2,3$, let $\ell_i$ denote the boundary line of $\bar{Y}_o$ that is the projection of $F_{e_i}$ into $\bar{Y}_o$. Then
\[
\frac{1}{\lambda} d_{\ell_1}( \ell_2, \ell_3) - \lambda \le d_{\mathfrak{fl}(v_1)} (\mathfrak{fl}(v_2), \mathfrak{fl}(v_3)) \le \lambda d_{\ell_1} (\ell_2, \ell_3) + \lambda
\]
\end{lem}

\begin{proof}
Let $\mathcal{S}_{e_1e_2}$ and $\mathcal{S}_{e_1e_3}$  be the strips in $Y_o$ connecting the planes $F_{e_1}$ to $F_{e_2}$ and $F_{e_1}$ to $F_{e_3}$ respectively.
We denote the line $\mathcal{S}_{e_1e_2} \cap F_{e_1}$ by $\ell$ and denote the line $\mathcal{S}_{e_1e_3} \cap F_{e_1}$ by $\ell'$. Note that both lines $\ell$ and $\ell'$ are fibers in $Y_o$. 
Recall that by our definition of projection maps, we have
$
\Pi_{\mathfrak{fl}(v_1)} (\mathfrak{fl}(v_2)) = \Lambda^{\ver}_{v_1} (\ell)$ and
$
\Pi_{\mathfrak{fl}(v_1)} (\mathfrak{fl}(v_3)) = \Lambda^{\ver}_{v_1} (\ell')
$.
By part~(\ref{lem:boundedfiberline3}) of Lemma~\ref{lem:boundedfiberline}, for some $\lambda>0$, we have that
$
\frac{1}{\lambda} \bigl | \ell -  \ell' \bigr | - \lambda \le \diam \bigl ( \Pi_{\mathfrak{fl}(v_1)} (\mathfrak{fl}(v_2)) \cup \Pi_{\mathfrak{fl}(v_1)} (\mathfrak{fl}(v_3)) \bigr ) \le \lambda \bigl | \ell -  \ell' \bigr | + \lambda
$.
Note that  $\bigl | \ell -  \ell' \bigr | = d_{\ell_1}(\ell_2, \ell_3)$
(indeed, let $\alpha$ and $\beta$ be the shortest geodesics joining $\ell_2$ to $\ell_1$ and $\ell_3$ to $\ell_1$ respectively, then $\ell$ and $\ell'$ are the product $\alpha_{+} \times \R$ and $\beta_{+} \times \R$ of endpoints of $\alpha$ and $\beta$ with the $\R$ direction in $Y_o = \bar{Y}_o \times \R$ respectively). Combining the above inequalities, we obtain a constant $\lambda' = \lambda'(\lambda) >0$ still denoted by $\lambda$ such that 
$
\frac{1}{\lambda} d_{\ell_1} (\ell_2, \ell_3) - \lambda \le \diam \bigl (\Pi_{\mathfrak{fl}(v_1)} (\mathfrak{fl}(v_2)) \cup \Pi_{\mathfrak{fl}(v_1)} (\mathfrak{fl}(v_3)) \bigr ) \le \lambda d_{\ell_1} (\ell_2, \ell_3) + \lambda
$.
The lemma is proved.
\end{proof}

We are now going to prove the following.
\begin{lem}\label{ProjSystemFibersLem1}
There exists a constant $\xi>0$ such that for each $i \in \{1, 2 \}$, the collection $\mathbb F_i$   with projection maps $\pi_{\mathfrak {fl}(v)}$'s satisfies the  projection axioms with projection constant $\xi$.
\end{lem}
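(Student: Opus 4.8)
The plan is to verify the three projection axioms of Definition~\ref{defn:projaxioms} for $(\mathbb F_i, \pi_{\mathfrak{fl}(v)})$, treating $i=1$ without loss of generality. Axiom~\eqref{axiom1} is immediate from Lemma~\ref{DiamProjLem}: the projection $\pi_{\mathfrak{fl}(v)}(\mathfrak{fl}(v'))$ always has diameter at most $R$ in $\mathfrak{fl}(v)$, so we may take $\xi\ge R$. Axiom~\eqref{axiom3}, finiteness of $\{\mathfrak{fl}(w): d_{\mathfrak{fl}(w)}(\mathfrak{fl}(u),\mathfrak{fl}(v))>\xi\}$ for fixed $u\ne v$, should follow by combining Lemma~\ref{CommonProjLem1} with a finiteness statement on each fixed $\mathfrak{fl}(w)$. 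Indeed, Lemma~\ref{CommonProjLem1} restricts the set of relevant $w$ to those with $d_T(w,[u,v])\le 1$; for $w$ strictly on the geodesic $[u,v]$ the two projections $\pi_{\mathfrak{fl}(w)}(\mathfrak{fl}(u))$ and $\pi_{\mathfrak{fl}(w)}(\mathfrak{fl}(v))$ are the marked lines cut out by the two adjacent strips, whose $\mathfrak{fl}(w)$-distance is the vertical length of a geodesic in $Y_w$ by Lemma~\ref{FiberDistLem} — this is finite for each fixed triple, so the only genuine accumulation could come from $w$ ranging over $[u,v]$; but $[u,v]$ is a finite geodesic, so the whole set is finite. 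For $w$ adjacent to but off $[u,v]$ one projection is a marked line and the other factors through a single boundary plane, again giving a uniformly controlled (in fact, for the axiom, merely finite per pair) contribution.

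The main work, and the expected obstacle, is Axiom~\eqref{axiom2}: the Behrstock-type inequality asserting that if $d_{\mathfrak{fl}(v)}(\mathfrak{fl}(u),\mathfrak{fl}(v'))>\xi$ then $d_{\mathfrak{fl}(u)}(\mathfrak{fl}(v),\mathfrak{fl}(v'))\le\xi$. I would prove this by a geometric argument inside the templates / strips of the CKA action. Given distinct $u,v,v'\in\mathcal V_1$, consider the geodesics $[u,v]$, $[u,v']$, $[v,v']$ in the Bass-Serre tree $T$; let $m$ be the median of $u,v,v'$. The idea is that the projection of $\mathfrak{fl}(v')$ to $\mathfrak{fl}(v)$ is determined by the first two edges of $[v,v']$, which pass through the vertex where $[v,v']$ departs from the other geodesics; unless $v$ itself is close (in $T$) to the median, this projection coincides with the projection of $\mathfrak{fl}(u)$ to $\mathfrak{fl}(v)$, forcing $d_{\mathfrak{fl}(v)}(\mathfrak{fl}(u),\mathfrak{fl}(v'))$ to be bounded (by Lemma~\ref{DiamProjLem}), contradicting the hypothesis. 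So the hypothesis $d_{\mathfrak{fl}(v)}(\mathfrak{fl}(u),\mathfrak{fl}(v'))>\xi$ forces $d_T(v,m)\le 1$, i.e.\ $v$ is (essentially) on all three geodesics near the branch point. In that configuration, $[u,v]$ and $[v',v]$ share their terminal edge into $v$, so the strip structure at the penultimate vertex $w$ (with $e_1=[v,w]$) is common; then both $\pi_{\mathfrak{fl}(u)}(\mathfrak{fl}(v))$ and $\pi_{\mathfrak{fl}(u)}(\mathfrak{fl}(v'))$ are computed from geodesics in $T$ that diverge from $[u,v]$ only \emph{before} reaching $w$, hence the two projections to $\mathfrak{fl}(u)$ agree up to the bounded error $R$ of Lemma~\ref{DiamProjLem}, giving $d_{\mathfrak{fl}(u)}(\mathfrak{fl}(v),\mathfrak{fl}(v'))\le\xi$.

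The delicate point in this argument — and where I would spend the most care — is the interface between the tree combinatorics and the actual metric projections: the projection $\pi_{\mathfrak{fl}(v)}(\mathfrak{fl}(v'))$ is not literally a function of $T$ alone, but depends on where the closest point on $\ell_{e_1}$ to the boundary line $\ell_{\bar e_2}$ in $\bar Y_w$ lies, and when the first two edges of $[v,v']$ and $[v,u]$ coincide one still needs the \emph{third} edge (or the endpoint) to not move this closest-point projection by more than $\xi$. This is exactly the content one gets from the angle-bounded-away-from-zero property of the strips $\mathcal S_{e_1e}\cap F_e$ recorded in the discussion after the strip construction, together with the fact that distinct boundary lines $\ell_{\bar e_2}$ in $\bar Y_w$ have shortest projections to $\ell_{e_1}$ landing in a uniformly bounded set (since $G_{e_1}$ acts cocompactly on $F_{e_1}$, as used in Lemma~\ref{DiamProjLem}). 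Assembling these uniform bounds over the finitely many orbit-types of vertices and edges yields a single $\xi>0$ that works, completing the verification.
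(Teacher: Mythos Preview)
Your outline correctly dispatches Axiom~\eqref{axiom1} via Lemma~\ref{DiamProjLem}, and invoking Lemma~\ref{CommonProjLem1} to restrict to $w$ with $d_T(w,[u,v])\le 1$ is the right first move. But there is a genuine gap in your treatment of both Axioms~\eqref{axiom2} and~\eqref{axiom3}, and it is the same missing ingredient in each: you never reduce to, or invoke, the projection axioms for the collection of \emph{boundary lines} $\{\ell_e:e_-=o\}$ in the hyperbolic base space $\bar Y_o$. This is the key geometric input, and without it neither axiom goes through.

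For Axiom~\eqref{axiom2}: in your labeling (hypothesis $d_{\mathfrak{fl}(v)}(\mathfrak{fl}(u),\mathfrak{fl}(v'))>\xi$), Lemma~\ref{CommonProjLem1} gives $d_T(v,[u,v'])\le 1$. When $v$ lies \emph{on} $[u,v']$, the geodesics $[u,v]$ and $[v',v]$ enter $v$ from opposite sides and do \emph{not} share a terminal edge, so your stated mechanism does not apply; the paper instead notes that then $d_T(u,[v,v'])\ge 2$ (since $u,v\in\mathcal V_1$ are at even distance) and reapplies Lemma~\ref{CommonProjLem1}. When $d_T(v,m)=1$, your argument that $\pi_{\mathfrak{fl}(u)}(\mathfrak{fl}(v))=\pi_{\mathfrak{fl}(u)}(\mathfrak{fl}(v'))$ works only if $d_T(u,m)\ge 2$. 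If $u$ is also adjacent to $m$, then after replacing $u,v'$ by their nearest link representatives one is reduced to $u,v,v'\in Lk(o)$ for some $o\in\mathcal W$, and there the Behrstock inequality for $\mathbb F_1$ becomes, via the identification $d_{\mathfrak{fl}(w)}(\mathfrak{fl}(u),\mathfrak{fl}(v'))\sim d_{\ell_{[w,o]}}(\ell_{e_1},\ell_{e_2})$, exactly the Behrstock inequality for boundary lines in $\bar Y_o$. This is a nontrivial fact about quasi-convex lines in a hyperbolic space and cannot be extracted from the cocompactness of $G_{e_1}$ on $F_{e_1}$ as your last paragraph proposes: cocompactness bounds the diameter of a \emph{single} projection, not the relative position of different projections.

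For Axiom~\eqref{axiom3}: for each $o\in[u,v]\cap\mathcal W$ there are infinitely many $w\in Lk(o)\cap\mathcal V_1$, and you must show only finitely many satisfy $d_{\mathfrak{fl}(w)}(\mathfrak{fl}(u),\mathfrak{fl}(v))>\xi$. Your phrase ``merely finite per pair'' asserts this without argument. The paper again uses the identification with $d_{\ell_{[w,o]}}(\ell_{e_1},\ell_{e_2})$ and appeals to the finiteness axiom already known for boundary lines in $\bar Y_o$.
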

\begin{proof}

We  verify in order the projection axioms (see Definition~\ref{defn:projaxioms})  for the projection maps defined on $\mathbb F_1$. The case for $\mathbb F_2$ is symmetric. The constant $\xi$ will be defined explicitly during the proof.

\textbf{Axiom 1:}
Let $\lambda >0$ be the constant given by Lemma~\ref{lem:boundedfiberline}. Since $\mathcal{S}_{e_1e_2} \cap F_{e_1}$ is a fiber line in $Y_w$, it follows from Lemma~\ref{lem:boundedfiberline} that 
$
\diam \Lambda^{\ver}_{v} \bigl ( \mathcal{S}_{e_1e_2} \cap F_{e_1} \bigr ) \le \lambda
$.
Thus $\diam \bigl ( \Pi_{\mathfrak{fl}(v)}(\mathfrak{fl}(v')) \bigr ) \le \lambda$. Axiom~1 in Definition~\ref{defn:projaxioms} is verified.

\textbf{Axiom 2:}
    Let $ {u},  {v}, {w}$ be distinct vertices in $ \mathcal V_1$.  We will show that there exists $\xi\geq 0$ sufficiently large satisfying the following property: if $d_{\mathfrak{fl}(w)}(\mathfrak{fl}(u), \mathfrak{fl}(v) ) > \xi$, then $d_{\mathfrak{fl}(u)}(\mathfrak{fl}(w), \mathfrak{fl}(v) )  \le \xi $ or $d_{\mathfrak{fl}(v)}(\mathfrak{fl}(w), \mathfrak{fl}(u) )  \le \xi $. The constant $\xi$ will be defined explicitly during the proof. 
    Since $d_{\mathfrak{fl}(w)}(\mathfrak{fl}(u), \mathfrak{fl}(v) ) > \xi$, it follows from Lemma~\ref{lem:easy1} that there is some restriction on $w$, i.e, $w$ is either lies on $[u,v]$ or $d_{T}(w, [u,v]) =1$. 

{\it Case~1:} $w$ lies on $[u, v]$. Since $u,w,v\in \mathcal V_1$, we have $d_{T}(u, [w,v])\ge 2$ and $d_{T}(v, [u,w])\ge 2$.   Axiom 2  thus follows from Lemma~\ref{lem:easy1}.

{\it Case~2:}  $d_{T}(w, [u,v])=1$. Without loss of generality, we can assume  that    $u,v,w$ lie in the same link $\Lk(o)$  for some vertex $o$ in $\mathcal{V}_{2}$. Indeed, let $o\in [u,v]$ be adjacent to $w$ and $u',v'\in Lk(o)\cap [u,v]$.   It is clear by definition that $\pi_{\mathfrak {fl}(u)}(\mathfrak {fl}(v'))=\pi_{\mathfrak {fl}(u)}(\mathfrak {fl}(v))$ and $\pi_{\mathfrak {fl}(v)}(\mathfrak {fl}(u'))=\pi_{\mathfrak {fl}(v)}(\mathfrak {fl} (u))$.  As a result, we can thus assume that $u=u'$ and $v=v'$ lie in the link $\Lk(o)$.

Recall that $\bar Y_o$ is a $\delta$-hyperbolic space whose boundary lines $\mathbb L_{o}$ satisfy the projection axioms for a constant $\xi_0$ \cite{S13}.   We claim that $\xi=\xi_0$ is the desired constant for Axiom 2.

\begin{figure}[htb] 
\centering \scalebox{0.8}{
\includegraphics{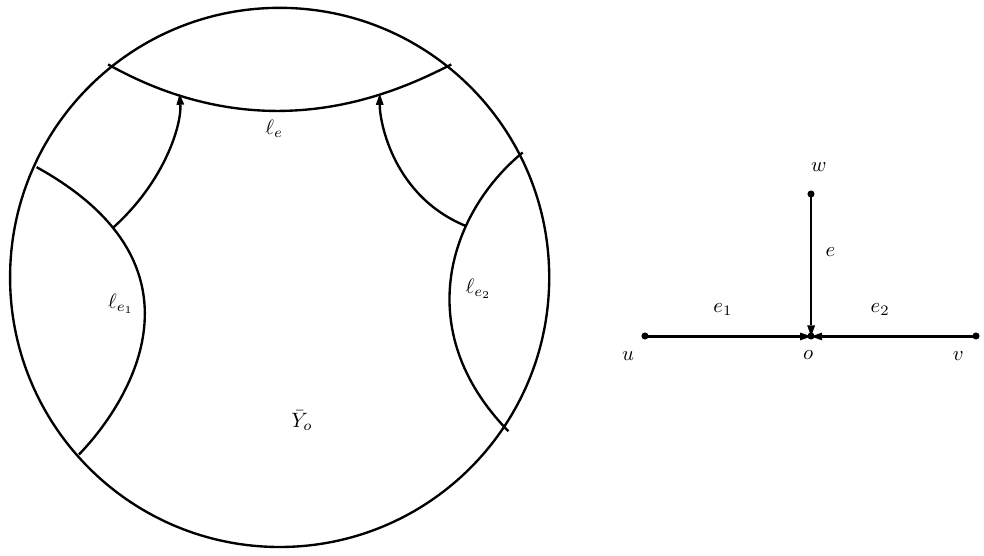} 
} \caption{Verification of Axiom 2}  \label{figure5}
\end{figure}

Denote $e=[w,o]$,  $e_1=[u,o], e_2=[v,o]$. Let $\ell_e, \ell_{e_1}, \ell_{e_2}$ be the corresponding boundary lines of $\bar Y_o$ to the oriented edges $e, e_1, e_2$. By Lemma~\ref{lem:easy3}, we have 
\[
\frac{1}{\lambda} d_{\ell_{e}}(\ell_{e_1}, \ell_{e_2}) -\lambda \le d_{\mathfrak{fl}(w)} (\mathfrak{fl}(u), \mathfrak{fl}(v))     \le \lambda d_{\ell_{e}}(\ell_{e_1}, \ell_{e_2}) + \lambda
\]

As $\mathbb  L_o$ satisfies  the projection axioms,  we see that if $d_{\ell_e}(\ell_{e_1}, \ell_{e_2})>\xi_0$, then   $d_{\ell_{e_1}}(\ell_{e}, \ell_{e_2})\le \xi_0$. Using Lemma~\ref{lem:easy3} again, we have that 
\[
\frac{1}{\lambda} d_{\ell_{e_1}}(\ell_{e}, \ell_{e_2}) - \lambda \le d_{\mathfrak{fl}(u)} (\mathfrak{fl}(w), \mathfrak{fl}(v) ) \le \lambda d_{\ell_{e_1}}(\ell_{e}, \ell_{e_2}) + \lambda
\]
Let $\xi$ be a constant such that $ \xi > \lambda \xi_{0} + \lambda$. It follows from the above inequalities that 
\[
d_{\mathfrak{fl}(u)}(\mathfrak{fl}(w), \mathfrak{fl}(v)) = \diam \bigl (\Pi_{\mathfrak{fl}(u)} (\mathfrak{fl}(w)) \cup \Pi_{\mathfrak{fl}(u)} (\mathfrak{fl}(v)) \bigr )    \le \xi 
\]
so Axiom~2 is verified.

\textbf{Axiom 3:}
 For $u\ne v\in \mathcal V_1$, the set
$$
\{w\in \mathcal V_1: d_{\mathfrak{fl}(w)}(\mathfrak {fl}({u}),\mathfrak {fl}({v}))>\xi\}
$$
is a finite set.

Indeed, by Lemma~\ref{lem:easy1},   such $w$ is either contained in the interior of $[u,v]$ or $d(w, [u,v])=1$.
The first case yields only $(d(u, v)-1)$ choices for $w$.
We now consider the case $d(w, [u,v])=1$. Since $u,v, w$ have pairwise even distance,  there exists $o\in \mathcal W\cap [u,v]^0$ and   two vertices $u',v'$  on $[u,v]$ adjacent to $o$ so that  $u', v', w\in Lk(o)$. By the projection axioms of boundary lines $\mathbb  L_o$ of $\bar Y_o$, the set of $w$ satisfying $d_{\mathfrak {fl}(w)}(\mathfrak {fl}({u}),\mathfrak {fl}({v}))>\xi$ is finite. Thus, in both cases, the set of such $w$'s is finite.  Axiom 3 is verified.
\end{proof}

\begin{lem}
\label{lem:action}
For each $i =1,2$, the collection $\mathbb F_i= \{\mathfrak {fl}({v} : v\in\mathcal V_i\}$ admits an action of the group $G$ so that
\[
\Pi_{g \mathfrak {fl}(v)}(g \mathfrak {fl}({u})) = g \Pi_{\mathfrak {fl}({v})}( \mathfrak {fl}({u}))
\]
for any $v, u\in \mathcal V_i$ and any $g\in G$.
\end{lem}

\begin{proof}
Firstly, let us recall some discussion in the beginning of Section~\ref{sub:basicfacts}.
Recall that
$\{v_1, v_2, \cdots, v_n\} \subset T$ be the full set of vertex representatives of $T$ and
for each representative vertices $v_1, v_2, \cdots v_n $ of $T$, the quasi-line $\mathfrak{fl}(v_j) $ is the Cayley graph $\Cay(G_{v_j}, S_{v_j})$ for some generating set $S_{v_j}$ of $G_{v_j}$ (see Lemma~\ref{lem:HRSS22}). 
Let $v$ be an arbitrary vertex in $T$, then $v = gv_i$ for some $g \in G$ and $i \in \{1, 2, \cdots, n \}$. The quasi-line $\mathfrak{fl}(v)$ is  given  by $g \mathfrak{fl}(v_i) = g \Cay(G_{v_i}, S_{v_i}) $, and the action of $G_{gv_{i}} = g G_{v_i} g^{-1}$ on $g \mathfrak{fl}(v_i)$ is induced from the action of $G_{v_i}$ on $ \mathfrak{fl}(v_i)$.
We are now going to show that \[
\Pi_{g \mathfrak{fl}(v)}(g \mathfrak{fl}(u)) = g \Pi_{\mathfrak{fl}(v)}( \mathfrak{fl}(u))
\]
Recall that  the family of maps $\Lambda_{gv}^{\ver} \colon Y_{gv} = g Y_v \to g \mathfrak{fl}(v)$ are $G$--equivariant:   $\Lambda_{gv}^{\ver} (gx) = g \Lambda_{v}^{\ver}(x)$ for all $x \in Y_v$. Let $e_1$ and $e_2$ be the first two edges in the geodesic $[v, u]$ with $v = (e_1)_{-}$ and $(e_1)_{+} = (e_2)_{-}$. By   Definition~\ref{defn:projectionmap} of projection map,  we have that 
$\Pi_{\mathfrak{fl}(gv)} (\mathfrak{fl}(gu)) = \diam \bigl (\Lambda_{gv}^{\ver} (\mathcal{S}_{ge_1 ge_2} \cap F_{ge_1} \bigr ) 
= \diam \bigr ( \Lambda_{gv}^{\ver}  (g (\mathcal{S}_{e_1e_2} \cap F_{e_1}) ) \bigr ) 
= \diam \bigl  ( g \Lambda_{v}^{\ver} (\mathcal{S}_{e_1e_2} \cap F_{e_1} ) \bigr ) \quad  \text{} 
= g \diam \bigl  ( \Lambda_{v}^{\ver}  (\mathcal{S}_{e_1e_2} \cap F_{e_1} ) \bigr ) 
= g \Pi_{\mathfrak{fl}(v)} (\mathfrak{fl}(u))$ for any $g\in G$.
\end{proof}

\begin{defn}
\label{rem:verticalQTtrees}
Let $\xi >0$ be the projection constant given by Lemma~\ref{ProjSystemFibersLem1}, so  the collection of  quasi-lines $\mathbb F_i= \{ \mathfrak{fl}(v) : v\in\mathcal V_i\}$ with $i=1, 2$ satisfies the projection axioms. For any fixed $K>4\xi$,   we obtain the unbounded quasi-trees of metric spaces $\mathcal C_{K} (\mathbb F_1)$ and $\mathcal C_{K} (\mathbb F_2)$ (see Section~\ref{sub:projectionaxioms}). 
 Combining Lemma~\ref{lem:action} with \cite[Theorem~4.4]{BBF}, the spaces $\mathcal C_{K} (\mathbb F_1)$ and $\mathcal C_{K} (\mathbb F_2)$ are quasi-trees and admit unbounded isometric actions  $G \curvearrowright \mathcal C_{K} (\mathbb F_1)$ and $G \curvearrowright \mathcal C_{K}(\mathbb F_2)$. The quasi-trees $\mathcal C_{K} (\mathbb F_1)$ and $\mathcal C_{K} (\mathbb F_2)$ are called {\it vertical quasi-trees} hereafter.
\end{defn}

\section{Distance formulas in the CKA space $X$}
\label{CKADistFormulaSec}
Let $\mathcal C_{K} (\mathbb F_1)$ and $\mathcal C_{K} (\mathbb F_2)$ be the vertical quasi-trees  in Definition ~\ref{rem:verticalQTtrees}. Let $\dot{\mathcal X_1}, \dot{\mathcal X_2}$ be the coned-off spaces defined in Section~\ref{coned-off spaces}.  According to the outline of the proof of Theorem~\ref{QTCKAThm}  in Section~\ref{OutlineProofSSec}, the last step to prove property (QT) of $G$ is to show that there is a $G$-equivariant quasi-isometric embedding 
$$\Phi \colon G o \to \mathcal{C}_{K}(\mathbb{F}_1) \times \mathcal{C}_{K}(\mathbb{F}_2) \times \dot{\mathcal X_1}\times \dot{\mathcal X_2}\times T$$
This section is devoted to constructing such a desired map $\Phi$ and verifying it is a quasi-isometric embedding.

{\bf We list here notations that will be used in the rest of this section.}
\begin{itemize}
    \item We fix an edge $[v_0, w_0]$ in the Bass-Serre tree $T$ such that $v_0 \in \mathcal{V}_1$.
Let  $o\in X$ be a base point in the common boundary plane $F_{[v_0, w_0]}$ between two pieces $Y_{v_0}$ and $Y_{w_0}$.

\item Assume that $x = o \in Y_{v_0}$ and $y = g o \in Y_{v_{2n}}$ for some $g \in G$ and $v_{2n} = go$. We list the vertices on the geodesic $[v_0, v_{2n}]$ by $\{v_0, v_1, \ldots, v_{2n} \}$ where $v_{2i} \in \mathcal{V}_1$ and $v_{2i+1} \in \mathcal{V}_2$. Denote $e_{i+1} = [v_i, v_{i+1}]$ the oriented edge towards $v_{i+1}$. By definition of special paths, let $p_i : = \mathcal{S}_{e_{i-1} e_i} \cap \mathcal{S}_{e_{i} e_{i+1}}$ be the intersection of two strips with $p_0 : x =o$ and $p_{2n+1} = y = go$.

\item Set $$\widetilde \alpha :=e_0\cup\alpha\cup e_{2n+1}$$ where $e_0=[w_{0}, v_0]$ and $e_{2n+1}=[v_{2n}, w_1]$. It is possible that $e_0=\bar e_1$ and $e_{2n+1}=\bar e_{2n}$, i.e,  $\widetilde \alpha$ contains backtracking at $e_0$ and $e_{2n}$.
\end{itemize}


\subsection{Construction of the desired map $\Phi$}
\label{sub:constructvartheta} It is a product of the following four maps with   the index map $\rho$ in Definition \ref{rem:indexfunction}.
\begin{itemize}
    \item We define $ \vartheta_1 \colon Go \to  \mathcal{C}_{K}(\mathbb{F}_1)$ as follows. Recall that each quasi-line   $\mathfrak{fl}(v)$ for $v\in\mathcal V_1$ embeds as a convex subset   into $\mathcal{C}_{K}(\mathbb{L}_1)$ and $\Lambda_v^{\ver}: G_vo\to \mathfrak{fl}(v)$ is a $G_v$-equivariant map. For   every $g \in G$, we set   $ \vartheta_1(go)  := \Lambda_{gv_0}^{\ver}(go)=g \Lambda_{v_0}^{\ver}(o)$. The second equality follows by $G_v$-equivariance. 

    \item 
Similarly, define $ \vartheta_2 \colon G o \to  \mathcal{C}_{K}(\mathbb{F}_2)$ by   $ \vartheta_2(go)  := \Lambda_{gw_0}^{\ver}(go)=g \Lambda_{w_0}^{\ver}(o)$ for every $g \in G$.

\item Define $\vartheta_3(o):=\pi_{\bar{Y}_{v_0}}(o)$ and extend the definition by equivariance so that $\vartheta_3(go):=g\vartheta_3(o)$ for any $g\in  G$. We thus obtain a $G$--equivariant map $\vartheta_{3} \colon G o \to \dot{\mathcal{X}_1}$.

\item Choose $\vartheta_4(o)$ to be the cone point of the hyperbolic cone  attached to the boundary line $\ell_{[v_0, w_0]}$ of $\bar{Y}_{w_0}$. We then extend   $\vartheta_4(go)=g\vartheta_4(o)$ for any $g\in G$ so that $g\vartheta_4(o)$ is the corresponding cone point to $\ell_{[gv_0, gw_0]}$ of $\bar{Y}_{gw_0}$. We thus obtain a $G$--equivariant map $\vartheta_{4} \colon G o \to \dot{\mathcal{X}_2}$.
\end{itemize}
We then define 
\begin{equation}
    \tag{$\clubsuit$}
    \label{map}
    \Phi \colon   G o \to \mathcal C_K(\mathbb F_1)\times \mathcal C_K(\mathbb F_2)\times (\dot{\mathcal X_1}, d^K_{\dot{\mathcal X_1}}) \times (\dot{\mathcal X_2}, d^K_{\dot{\mathcal X_2}}) \times T
\end{equation}
 by
\[
\Phi:= \vartheta_1\times  \vartheta_2 \times \vartheta_3\times \vartheta_4 \times \rho
\]
where  $\dot{\mathcal X_i}$ for $i=1,2$ are equipped with the $K$-thick distance $d^K_{\dot{\mathcal X_i}}$ (not genuine distance)  defined in (\ref{ThickDistEQ}), and the other three spaces are with length metric. By abuse of language, we call  the sum of the distances over the factors as $L^1$-metric on the     product space.  

The remainder of this section is to verify the following.
\begin{prop}
\label{prop;techinicalQIE}
    The map $\Phi$ in (\ref{map}) is a $G$--equivariant quasi-isometric embedding.
\end{prop}

\paragraph{\textbf{Idea of the proof of Proposition~\ref{prop;techinicalQIE}}:} Since the orbital map of any   isometric action is  Lipschitz (e.g. see  \cite[Lemma~I.8.18]{BH99}), we will only need  to give a linear upper bound on $|x-y|_X$. 
Recall from (\ref{hvdistancedefn}), for any $x,y\in X$, 
$$
|x-y|_X \sim \bigl|x-y\bigr|_X^{\hor}+\bigl|x-y\bigr|_X^{\ver}
$$
where $\bigl|x-y\bigr|_X^{\hor}=\sum_{i=0}^{2n} \bigl|p_{i} - p_{i+1}\bigr|_{Y_{v_i}}^{\hor}$ and $\bigl|x-y\bigr|_X^{\ver}=\sum_{i=0}^{2n} \bigl|p_{i} - p_{i+1}\bigr|_{Y_{v_i}}^{\ver}$.

Recall from Section~\ref{sub:basicfacts}, we build a new geometric model $X_v$ of $Y_v$   for each vertex $v$ in the Bass-Serre tree $T$. Namely, we have a $G_v$--equivariant quasi-isometric map 
$
\Lambda_{v} \colon Y_{v}=\bar{Y}_{v} \times  \R \to X_v= \bar{Y}_{v} \times \mathfrak{fl}(v)
$. 
For $x,y\in Go$, we shall  accordingly replace $\bigl|x-y\bigr|_X^{\ver}$ by the following quantity
\begin{align}
\label{verticaldistance}
V(x, y) := 
\sum_{0\le i\le 2n}
 \bigl |\Lambda_{v_i}^{\ver} (p_i) - \Lambda_{v_i}^{\ver} (p_{i+1}) \bigr |_{\mathfrak{fl}(v_i)}
\end{align}
To be precise,  we first prove in Lemma~\ref{lem:upperboundd} the following
$$
|x-y|_X \le \epsilon \left (|\rho(x)- \rho(y)|_T + \bigl|x-y\bigr|_X^{\hor} +  V(x,y) \right )
$$
We then find suitable upper bounds of $V(x,y)$ (see Proposition~\ref{vDistFormulaProp}) and $\bigl|x-y\bigr|_X^{\hor} $ (see Lemma~\ref{lem:upperdh}). 

\subsection{Verifying $\Phi$ is a quasi-isometric embedding}
In this section, we will verify that the map $\Phi$ in (\ref{map}) is a quasi-isometric embedding.

\subsubsection{Upper bound of the distance  $|x-y|_X$ on $X$}
\begin{lem}
\label{lem:upperboundd}
Let $x,y\in Go$. The exists a constant $\epsilon > 0$ such that
\begin{equation}
\label{equ:upperbound1}
    |x-y|_X \le \epsilon \left (|\rho(x)- \rho(y)|_T + \bigl|x-y\bigr|_X^{\hor} +  V(x,y) \right ) 
\end{equation}
\end{lem}

\begin{proof}
 Recall that $p_0 = x$ and $p_{2n+1} = y$. Using the triangle inequality we have
$$
|x-y|_X \le \sum_{i=0}^{2n} |p_{i} - p_{i+1}|_{Y_{v_i}}
$$
Note that $2n=|\rho(x)-\rho(y)|_T$ and $\bigl|x-y\bigr|_X^{\hor}=\sum_{i=0}^{2n} \bigl|p_{i} - p_{i+1}\bigr|_{Y_{v_i}}^{\hor}$. The proof is then completed by summing over $0\le i\le 2n$ the following inequality (\ref{equ:upperpipi+1}).

\begin{claim} 
There exists a uniform constant $\epsilon' >0$ such that for any $i \in \{0, 1, \cdots, 2n \}$,
\begin{equation}
\label{equ:upperpipi+1}
    |p_i- p_{i+1}|_{Y_{v_i}} \le \epsilon' + \epsilon'\, \bigl|p_i- p_{i+1}\bigr|_{Y_{v_i}}^{\hor} + \epsilon'\, \bigl |\Lambda^{\ver}_{v_{i}}(p_{i})- \Lambda^{\ver}_{v_{i}} (p_{i+1}) \bigr |_{\mathfrak{fl}(v_i)}. 
\end{equation}
\end{claim}
\begin{proof}[Proof of the Claim]
Indeed, since $ \Lambda_{v_i}: Y_{v_i} \to X_{v_i}$ is a quasi-isometry by Lemma~\ref{lem:boundedfiberline}, we then have
\begin{align*}
|p_i- p_{i+1}|_{Y_{v_i}} \sim_{\lambda} \bigl |\Lambda_{v_{i}}(p_{i})-  \Lambda_{v_{i}}(p_{i+1}) \bigr|_{X_{v_{i}}}  
\end{align*}
Using part~(2) of Lemma~\ref{lem:boundedfiberline} we have that $$\bigl|\Lambda^{\hor}_{v_{i}}(p_{i})- \Lambda^{\hor}_{v_{i}}(p_{i+1})\bigl|_{\bar{Y}_{v_i}} \sim_{\lambda} \bigl|p_i- p_{i+1}\bigr|_{Y_{v_{i}}}^{\hor}$$
It implies that
\begin{align*}
  & \quad \quad\bigl |\Lambda_{v_{i}}(p_{i}))-  \Lambda_{v_{i}}(p_{i+1}) \bigr |_{X_{v_{i}}}  \\
   &  \sim_{\sqrt{2}}\;  \bigl|\Lambda^{\hor}_{v_{i}}(p_{i})- \Lambda^{\hor}_{v_{i}}(p_{i+1})\bigl|_{\bar{Y}_{v_i}} +  \bigl |\Lambda^{\ver}_{v_{i}}(p_{i})- \Lambda^{\ver}_{v_{i}} (p_{i+1}) \bigr |_{\mathfrak{fl}(v_i)} \\
     & \sim_{\lambda}\;\;   \bigl|p_i- p_{i+1}\bigr|_{Y_{v_{i}}}^{\hor} +  \bigl |\Lambda^{\ver}_{v_{i}}(p_{i})- \Lambda^{\ver}_{v_{i}} (p_{i+1}) \bigr |_{\mathfrak{fl}(v_i)} 
    \end{align*}
where the first coarse equality holds by definition of $\Lambda_{v_i}^{\hor}$ and $\Lambda_{v_i}^{\ver}$.    
Hence there exists a uniform  constant $\epsilon' > 0$ such that the inequality (\ref{equ:upperpipi+1}) holds.
The above claim is proved.
\end{proof}

The lemma is proved.
\end{proof}

\subsubsection{Preparation for upper bounds of $V(x,y)$ and $|x-y|_X^{\hor}$}
Fix $K\ge 4\xi$ where the constant $\xi > \lambda$ is given by Lemma \ref{ProjSystemFibersLem1}. Let  $\mathcal{C}_K(\mathbb F_1)$ and $\mathcal{C}_{K}(\mathbb F_2)$ be the vertical quasi-trees given by Remark~\ref{rem:verticalQTtrees}.
With $i \in \{1,2\}$, Proposition \ref{BBFDistanceProp} gives the   distance formula
\begin{equation}
    \tag{$\circledast$}
    \label{eqn:distanceformula}
    \bigl|\vartheta_{i}(x)- \vartheta_{i}(y)\bigr|_{\mathcal C_K(\mathbb F_i)} \;\sim_K\; \sum_{\mathfrak {fl}(w) \in \mathbb F_i} [d_{\mathfrak {fl}(w)}(\vartheta_{i}(x), \vartheta_{i}(y))]_K
\end{equation}

To give an appropriate upper bound of $V(x,y)$, we need the following two technical lemmas (Lemma~\ref{VDistLem} and Lemma~\ref{WDistLem}).
\begin{lem}\label{VDistLem}
For any $v_{2i}\in  [v_0,v_{2n}]$ with $0\le i \le n$, we have
\begin{align}
\label{FiberDistEQ}
d_{\mathfrak{fl}(v_{2i})} \bigl ( \vartheta_1(x),  \vartheta_1(y) \bigr ) \sim_{\lambda}  \bigl | \Lambda_{v_{2i}}^{\ver} (p_{2i}) - \Lambda_{v_{2i}}^{\ver} (p_{2i+1}) \bigr |_{\mathfrak{fl}(v_{2i})}
\end{align}
For any $0\le i \le n-1$, we have
\begin{align}
\label{FiberDistEQ2}
d_{\mathfrak{fl}(v_{2i+1})} \bigl ( \vartheta_2(x),  \vartheta_2(y) \bigr )\sim_{\lambda}  \bigl  | \Lambda^{\ver}_{v_{2i+1}} (p_{2i+1}) -  \Lambda^{\ver}_{v_{2i+1}}(p_{2i+2}) \bigr |_{\mathfrak{fl}(v_{2i+1})}
\end{align}
\end{lem}

\begin{proof}
We first prove (\ref{FiberDistEQ}) for the case $0< i< n$.  The  cases $i=0$ or $i=n$ are similar.   

Note  that  $\ell_1:=\mathcal{S}_{e_{2i-1} e_{2i}} \cap F_{e_{2i}}$ is a fiber line of $Y_{v_{2i-1}}$ containing $p_{2i}$, and  similarly, $\ell_2:=\mathcal{S}_{e_{2i+1} e_{2i+2}} \cap F_{e_{2i+1}}$ contains $p_{2i+1}$.  By Definition~\ref{defn:projectionmap} of projection maps, we have
\[
\Pi_{\mathfrak{fl}(v_{2i})} ( \mathfrak{fl}(v_{0})) = \Lambda_{v_{2i}}^{\ver} \bigl (\ell_1 \bigr )
\] and
\[
\Pi_{\mathfrak{fl}(v_{2i})} ( \mathfrak{fl}(v_{2n})) = \Lambda_{v_{2i}}^{\ver} \bigl (\ell_2 \bigr )
\]

Let $\lambda >0$ be the constant given by Lemma~\ref{lem:boundedfiberline}, so the fiber lines $\ell_1, \ell_2$ are sent by $\Lambda_{v_{2i}}^{\ver}$       into $\mathcal L_{v_{2i}}$ as   subsets of diameter at most $\lambda$: $$\diam{\Lambda_{v_{2i}}^{\ver}(\ell_1)}, \;\diam{\Lambda_{v_{2i}}^{\ver}(\ell_2)}\le \lambda$$ By definition of $ \vartheta_1$, we have
 $ \vartheta_1(x) = \Lambda_{v_0}^{\ver}(x) \in \mathfrak{fl}(v_{0}) $ and $ \vartheta_1(y)=\Lambda_{v_{2n}}^{\ver}(y)\in \mathfrak{fl}(v_{2n})$. Thus 
$$
d_{\mathfrak{fl}(v_{2i})} \bigl ( \vartheta_1(x),  \vartheta_1(y) \bigr ) \sim_{\lambda} d_{\mathfrak{fl}(v_{2i})} \bigl (\mathfrak{fl}(v_{0}), \mathfrak{fl}(v_{2n})\bigr ) 
$$
As $p_{2i}\in \ell_1$ and $p_{2i+1}\in \ell_2$, we obtain  
$$
d_{\mathfrak{fl}(v_{2i})} \bigl (\mathfrak{fl}(v_{0}), \mathfrak{fl}(v_{2n})\bigr )   \sim_{\lambda}  \bigl | \Lambda_{v_{2i}}^{\ver} (p_{2i}) - \Lambda_{v_{2i}}^{\ver} (p_{2i+1}) \bigr |_{\mathfrak{fl}(v_{2i})}
$$
completing the proof of (\ref{FiberDistEQ}).

We are now going to prove (\ref{FiberDistEQ2}).  If  $w_0\ne v_1$ or  $1 \le i \le n-1$, the same proof for (\ref{FiberDistEQ}) proves (\ref{FiberDistEQ2}).
We now consider $w_0=v_1$ and  $i= 0$. In this case, we note that $e_0=\bar e_1$. By definition, we have that $ \vartheta_2(x) =  \vartheta_2(o) = \Lambda^{\ver}_{w_0}(o) \in \mathfrak{fl}(w_0)$, so we obtain $\Pi_{\mathfrak{fl}(v_1)}( \vartheta_2(x))= \vartheta_2(x)$. Recall that $\mathcal{S}_{xe_1}$ is the strip in $Y_{v_0}$ over the shortest arc from $x$ to $F_{e_1}$ (See construction of special path). As $x\in F_{e_0} = F_{e_1}$,  we have $\ell_1 := \mathcal{S}_{xe_1}$ is a fiber line of $Y_{v_0}$ that passes through $x$ and also $p_1$. Thus, $ \vartheta_2(x)\in \Pi_{\mathfrak{fl}(v_1)}(\ell_1)$. 

Recall that $\mathcal{S}_{xe_1}$ is the strip in $Y_{v_0}$ over the shortest arc from $x$ to $F_{e_1}$ (See construction of special path). As $x\in F_{e_0} = F_{e_1}$,  we have $\ell_1 := \mathcal{S}_{xe_1}$ is a fiber line of $Y_{v_0}$ that passes through $x$ and also $p_1$. Thus, $ \vartheta_2(x)\in \Pi_{\mathfrak{fl}(v_1)}(\ell_1)$. 
Let $\ell_2=\mathcal{S}_{e_2e_3} \cap F_{e_2}$ be the fiber line on $Y_{v_2}$ that passes through $p_2$.  If $w_1=v_1$, then  $\alpha = [v_0,v_1][v_1,v_2]$ and $y \in F_{e_2}$. By the same reason,   $\ell_2$ passes through $y$, so   $ \vartheta_2(y)\in \Pi_{\mathfrak{fl}(v_2)}(\ell_2)$. If   $w_1\ne v_1$, the projection $\Pi_{\mathcal L_{v_1}}( \vartheta_2(y))$ must be contained in $\Pi_{\mathfrak{fl}(v_1)}(\ell_2)$.   In both cases, we have $$
d_{\mathfrak{fl}(v_1)}( \vartheta_2(x),  \vartheta_2(y)) \sim_{\lambda} \diam (\Lambda^{\ver}_{v_1}(\ell_1) \cup \Lambda^{\ver}_{v_1} (\ell_2) )  
$$
where  we use    $\diam \bigl ( \Lambda^{\ver}_{v_1} (\ell_1) \bigr ),\; \diam \bigl ( \Lambda^{\ver}_{v_1} (\ell_2) \bigr ) \le \lambda$ by Lemma~\ref{lem:boundedfiberline}.
For $p_1\in \ell_1$ and $p_2\in \ell_2$,  we obtain
$$
\diam (\Lambda^{\ver}_{v_1}(\ell_1) \cup \Lambda^{\ver}_{v_1} (\ell_2) ) \sim_{\lambda} \bigl | \Lambda^{\ver}_{v_1}(p_1) - \Lambda^{\ver}_{v_1}(p_2) \bigr |_{\mathfrak{fl}(v_1)}
$$
concluding the proof of (\ref{FiberDistEQ2}).  
\end{proof}

Let us recall the notation from Section~\ref{sub:projectionaxioms}. Let $x\in \mathfrak{fl}(v), z \in  \mathfrak{fl}(u) \in \mathbb {F}_i $.

If $ \mathfrak{fl}(v) \ne  \mathfrak{fl}(u) \ne \mathfrak{fl}(w)$ then  $$d_{\mathfrak{fl}(w)}(x, z) := d_{\mathfrak{fl}(w)} (\mathfrak{fl}(v), \mathfrak{fl}(u))$$

If $\mathfrak{fl}(w) = \mathfrak{fl}(v) , \mathfrak{fl}(w) \ne \mathfrak{fl}(u)$, define $d_{\mathfrak{fl}(w)} (x, z) := \diam(\pi_{\mathfrak{fl}(w)} (x,  \mathfrak{fl}(u)))$.

If $\mathfrak{fl}(v) = \mathfrak{fl}(u) = \mathfrak{fl}(w)$, let $d_{\mathfrak{fl}(w)}(x, z)$ be the distance in $\mathfrak{fl}(w)$.

\begin{lem}\label{WDistLem}
Let $\vartheta_2$ be the map given by Section~\ref{sub:constructvartheta}. Let $v$ be a vertex in $v\in \Lk(v_{2i})\setminus  \widetilde{\alpha}$ and let $e=[v, v_{2i}]$. Let $\ell_{e}$, $\ell_{e_{2i}}$, and $\ell_{\bar e_{2i+1}}$ be the boundary lines of $\widetilde{F}_{v_{2i}}$ associated to distinct edges $e, e_{2i}$ and $\bar e_{2i+1}$ respectively. 
Then we have
\begin{align}\label{HDistEQ2}
d_{\mathfrak {fl}(v)}(\vartheta_2(x), \vartheta_2(y))\sim_{\lambda} d_{\ell_e}(\ell_{e_{2i}}, \ell_{\bar e_{2i+1}})
\end{align}
\end{lem}

\begin{proof}
Note that $\ell_{e}$, $\ell_{e_{2i}}$, and $\ell_{\bar e_{2i+1}}$ are the projection of planes $F_e$, $F_{e_{2i}}$, $F_{e_{2i+1}}$ of $Y_{v_{2i}}$ into the factor $Y_{v_{2i}}$.
We prove (\ref{HDistEQ2}) case by case, according to  the configuration of $e_0, e_{2n+1}$ with $\alpha$.

\textbf{Case 1}. $0<i<n$. By Definition~\ref{defn:projectionmap} of projection maps, the projection of $ \vartheta_2(x) =\Lambda^{\ver}_{w_0}(o) \in \mathfrak{fl}(w_0)$ to $\mathfrak{fl}(v)$ is the same as that of $\mathfrak{fl}(v_1)$ to $\mathfrak{fl}(v)$, and the projection of $ \vartheta_2(y) \in \mathfrak{fl}(w_1)$ to $\mathfrak{fl}(v)$ is the same as that of $\mathfrak{fl}(v_{2n-1})$ to $\mathfrak{fl}(v)$. That is to say,   $d_{\mathfrak{fl}(v)} \bigl ( \vartheta_2(x),  \vartheta_2(y) \bigr )=d_{\mathfrak{fl}(v)} \bigl (\mathfrak{fl}(v_1), \mathfrak{fl}(v_{2n-1}) \bigr ) $. Hence,  the Equation  (\ref{HDistEQ2})    follows by Lemma~\ref{lem:easy3}: 
$
d_{\mathfrak{fl}(v)} \bigl (\mathfrak{fl}(v_1), \mathfrak{fl}(v_{2n-1}) \bigr ) \sim_{\lambda} d_{\ell_e}(\ell_{e_{2i}}, \ell_{\bar e_{2i+1}})
$
for any $v\in \Lk(v_{2i})\setminus  \widetilde{\alpha}.$

\textbf{Case 2}. $i=0$ or $i=n$. We only consider the case $i=0$ and analyze the configuration of $w_0$ with $\alpha$.  The analyze for the case for $i=n$ and $w_1$ is symmetric. 

{Case~2.1:} $w_0\ne v_1$. In this case $e_0\cdot \alpha$ is a geodesic from $w_0$ to $v_{2n}$. By Definition~\ref{defn:projectionmap} of projection maps, no matter whether $\bar e_{2n+1}=e_{2n}$ holds or not,  the projection of $ \vartheta_2(x)  \in \mathfrak{fl}(w_0)$ to $\mathfrak{fl}(v)$ is the same as that of $\mathfrak{fl}(w_0)$ to $\mathfrak{fl}(v)$, and the projection of $ \vartheta_2(y) \in \mathfrak{fl}(w_1)$ to $\mathfrak{fl}(v)$ is the same as that of $\mathfrak{fl}(v_{2n-1})$ to $\mathfrak{fl}(v)$. By Lemma~\ref{lem:easy3}, we have 
$
d_{\mathfrak{fl}(v)} \bigl (\mathfrak{fl}(w_0), \mathfrak{fl}(v_{2n-1}) \bigr ) \sim_{\lambda} d_{\ell_e}(\ell_{e_{2i}}, \ell_{\bar e_{2i+1}})
$.

{Case~2.2: $w_0 = v_1$. No matter whether $w_0 = w_1$ or not,  we have 
$
d_{\mathfrak{fl}(v)}( \vartheta_2(x),  \vartheta_2(y))\le \Pi_{\mathfrak{fl}(v)}(\mathfrak{fl}(w_0))\le \xi
$
where $\xi$ is the projection constant given by Lemma~\ref{ProjSystemFibersLem1}.
On the right side of (\ref{HDistEQ2}), we have $d_{\ell_e}(\ell_{e_{2i}}, \ell_{\bar e_{2i+1}})$ is bounded above by $\xi$ for $i=0$ (as $e_0=\bar e_1$). Thus     (\ref{HDistEQ2}) holds as well in this case.
}
\end{proof}

\subsubsection{Upper bound of $V(x,y)$}
Let $ \vartheta_1$ and $ \vartheta_2$ be the maps defined in Section~\ref{sub:constructvartheta}.
We now have prepared all ingredients for the proof of the following result. 
\begin{prop}
\label{vDistFormulaProp}
Let $x, y\in Go$ and   $\alpha:=[\rho(x), \rho(y)]$  be the geodesic in $T$.  Then
\begin{equation}\label{vdistEQ}
V(x, y) \preceq_K \sum_{j=1,2} \left(\sum_{ v \in \alpha \cap \mathcal V_j} [d_{\mathfrak{fl}(v)} (\vartheta_j(x), \vartheta_j(y))]_{K} \right)  +d_{T}(\rho(x),\rho(y))
\end{equation}
\end{prop}

\begin{proof}
The goal is to recover the sum on the right side of (\ref{verticaldistance}), that is
$$
V(x,y)= 
\sum_{0\le i\le 2n}
 \bigl |\Lambda^{\ver}_{v_i} (p_i) - \Lambda^{\ver}_{v_i} (p_{i+1}) \bigr |_{\mathfrak{fl}(v_i)}
 $$
via the maps $ \vartheta_1$ and $ \vartheta_2$.
By Lemma \ref{VDistLem}, we  have desired inequalities   (\ref{FiberDistEQ})
for even indices $v_{2i}\in  [v_0,v_{2n} ] \cap \mathcal V_1$ with $0\le i \le n$, that is
\[
d_{\mathfrak{fl}(v_{2i})} \bigl ( \vartheta_1(x),  \vartheta_1(y) \bigr ) \sim_{\lambda}  \bigl | \Lambda_{v_{2i}}^{\ver} (p_{2i}) - \Lambda_{v_{2i}}^{\ver} (p_{2i+1}) \bigr |_{\mathfrak{fl}(v_{2i})}
\]

By Lemma \ref{WDistLem}, the  inequalities   (\ref{FiberDistEQ2}) recover the odd indices $v_{2i+1}\in  [v_0,v_{2n}] \cap \mathcal V_2$ with $0\le i \le n-1$ in (\ref{verticaldistance}), that is,
\[
d_{\mathfrak{fl}(v_{2i+1})} \bigl ( \vartheta_2(x),  \vartheta_2(y) \bigr )\sim_{\lambda}  \bigl  | \Lambda^{\ver}_{v_{2i+1}} (p_{2i+1}) -  \Lambda^{\ver}_{v_{2i+1}}(p_{2i+2}) \bigr |_{\mathfrak{fl}(v_{2i+1})}
\]

Plugging the inequalities (\ref{FiberDistEQ})  (\ref{FiberDistEQ2}) into (\ref{verticaldistance}), and using the term $|\rho(x)- \rho(y)|_T$ to count the additive errors in this process completes the proof of the desired inequality (\ref{vdistEQ}). Applying then the $K$-cutoff function $[\cdot]_{K}$ does not affect the inequalities.
\end{proof}

\subsubsection{Upper bound of $|x-y|_X^{\hor}$} 
The   horizontal distance $d^h$ defined in (\ref{hvdistancedefn}) of the special path $\gamma$ from $x$ to $y$ records the totality of the projected distances to the base hyperbolic spaces $\bar Y_v$:
$$
\begin{array}{rl}
 \bigl|x-y\bigr|_X^{\hor} & = \bigl|x- p_1\bigr|_{Y_{v_1}}^{\hor} + \bigl|p_1- p_2\bigr|_{Y_{v_2}}^{\hor} +\cdots+ \bigl|p_{2n}- y\bigr|_{Y_{v_{2n}}}^{\hor} \\
 & \\
     & = |\vartheta_3(x)- F_{e_1}|_{Y_{v_0}}+\displaystyle\sum_{i=1}^{2n-1} |F_{e_i}- F_{e_{i+1}}|_{Y_{v_i}} + |F_{e_{2n}}-\vartheta_3(y)|_{Y_{v_{2n}}}
\end{array}
$$
where the map $\vartheta_3$ defined in Section \ref{sub:constructvartheta} sends a point in $Y_v=\bar Y_v\times \mathbb R$  to the hyperbolic base  $\bar Y_v$.

Before moving on, let us introduce more notations to represent the horizontal distance.
Let $x_0=\vartheta_3(x), y_0\in F_{e_1}$ and $x_{2n}\in F_{e_{2n}}, y_{2n}=\vartheta_3(y)$ so that $[x_0, y_0]$ is orthogonal to $F_{e_1}$, and $[x_{2n}, y_{2n}]$ to $F_{e_{2n}}$.  Choose $x_i\in F_{e_i}, y_i\in F_{e_{i+1}}$ so that $[x_i, y_i]$ is a geodesic in $\bar Y_{v_i}$ orthogonal to $F_{e_i}$ and $F_{e_{i+1}}$. Thus, {
\begin{equation}\label{hdistEQ}
\bigl|x-y\bigr|_X^{\hor} =\sum_{i=0}^{2n} |x_i-y_i|_{\bar Y_{v_i}}.
\end{equation}
}

Recall that $\dot{\mathcal X_1}, \dot{\mathcal X_2}$ are the coned-off spaces defined in Section~\ref{coned-off spaces}. By  Definition~\ref{defn:Kthickdistance} of the $K$-thick distance of $\dot{\mathcal X_j}$  for any $K>0$ and the Remark after it, we have
\begin{equation}\label{ThickdistEQ}\sum_{i=0}^{2n}\bigl|x_{i}-y_{i}\bigr|_{\dot Y_{v_{i}}}^K   =  \bigl|\vartheta_{3}(x)-\vartheta_{3}(y)\bigr|_{\dot{\mathcal X_1}}^K+\bigl|\vartheta_{4}(x)-\vartheta_{4}(y)\bigr|_{\dot{\mathcal X_2}}^K
\end{equation}
where $|\cdot|_{\dot Y_{v_{i}}}^K$  defined in (\ref{RHGKthickdistEQ}) is the $K$-thick distance on the coned-off space $\dot Y_{v_{i}}$. The map $\vartheta_4$ defined in Section \ref{sub:constructvartheta} sends a point $go$ in $Go$ to the hyperbolic cone point to the boundary line $\ell_{g[v_0,w_0]}$ (recall that $o$ is chosen on a common boundary plane $F_{[v_0,w_0]}$).

Hence, the $K$--thick distance (\ref{ThickdistEQ}) differs from the horizontal distance (\ref{hdistEQ}) by the amount coned-off on boundary lines.   The purpose of this subsection is to recover the loss in the coned-off    from the projection system of fiber lines.

\begin{lem}
\label{lem:upperdh}
For any $x,y\in Go$, we have
$$
    \bigl|x-y\bigr|_X^{\hor} 
    \preceq_K  \sum_{i=0}^{2n} \bigl|x_i-y_i\bigr|_{\dot Y_{v_i}}^K+    \sum_{i=0}^{2n} \sum_{w\in \Lk(v_{i}) \setminus \alpha} [d_{\mathfrak{fl}(w)}(\vartheta_j(x), \vartheta_j(y))]_K
$$
where the index  $j=1$ is chosen if $i$ is odd, otherwise $j=2$.
\end{lem}

\begin{proof}
 We consider the equation (\ref{hdistEQ}) for the horizontal distance $\bigl|x-y\bigr|_X^{\hor} $. Let $\mathbb L_{v_i}$ be the set of boundary lines of $\bar Y_{v_i}$ corresponding to the set of oriented edges $e\in St(v_i)$ (i.e., the collection $\{ F_{e} \cap \bar{Y}_{v_{i}} : e \in St(v_i) \}$).
By Lemma \ref{RHGDistFLem}, for each $0\le i\le 2n$, we have
\begin{align}\label{xiyidistEQ}
\bigl|x_i-y_i\bigr|_{\bar Y_{v_i}} \sim_K \bigl|x_i-y_i\bigr|_{\dot Y_{v_i}}^K+\sum_{\ell_e\in \mathbb L_{v_i}} [d_{\ell_e} (x_i, y_i)]_K
\end{align}
for any sufficiently large $K\gg 0$.

Let $e=[w,v_i]\in St(v_i)$ and $\ell_e\in \mathbb L_{v_i}$ be the corresponding boundary line of $\bar Y_{v_i}$. Set  $j=1$ if $i$ is odd, otherwise $j=2$.

If $e= e_i$   or $e=\bar e_{i+1}$ for {$1\le i\le 2n-1$}, then $$d_{\ell_e} (x_i, y_i)\le \xi$$ since $[x_i, y_i]$ is orthogonal to $\ell_{e}$.

We remark that when $i=0$ (the case $i =2n$ is similar), it is possible that $[x_0, y_0]$ may not be perpendicular to $\ell_e$. However, we have $$d_{\ell_{e}}(x_0, y_0) \preceq d_{\mathfrak{fl}(w_0)}(\vartheta_{2}(x), \vartheta_{2}(y))$$ 

Otherwise, if $e\ne e_i$   and $e\ne \bar e_{i+1}$ for $1\le i\le 2n-1$, we have $e\notin \alpha$ for which the following holds by  {Lemma~\ref{WDistLem} for $j =2$ and by Lemma~\ref{lem:easy3} for $j=1$} , $$
d_{\mathfrak{fl}(w)}(\vartheta_j(x), \vartheta_j(y)) \sim  d_{\ell_e} (x_i, y_i).
$$
Note that $A\le \lambda B+C $ with $B\ge K\ge C$ implies $[A]_K\preceq_K [B]_K$. Thus, for each $0\le i\le 2n$, we   deduce from  (\ref{xiyidistEQ}) the following
\begin{align}\label{xyiHDistEQ}
\bigl|x_i-y_i\bigr|_{\bar Y_{v_i}} \sim_K \bigl|x_i-y_i\bigr|_{\dot Y_{v_i}}^K+  \sum_{w\in Lk(v_i) \setminus \alpha} [d_{\mathfrak{fl}(w)}(\vartheta_j(x), \vartheta_j(y))]_K
\end{align}
for any $K\gg 0$, where $j=1$ if $i$ is odd, and $j=2$ otherwise.
We sum up (\ref{xyiHDistEQ}) over $v_i\in \alpha$ to get the horizontal distance $d^h(x,y)$ in (\ref{hdistEQ}): 
\begin{align*}
    \bigl|x-y\bigr|_{X}^{\hor}  &  = \sum_{i=0}^{2n}  \bigl|x_i-y_i\bigr|_{\bar Y_{v_i}}  \\
    \preceq_K & \sum_{i=0}^{2n} \bigl|x_i-y_i\bigr|_{\dot Y_{v_i}}^K+    \sum_{i=0}^{2n} \sum_{w\in \Lk(v_{i}) \setminus \alpha} [d_{\mathfrak{fl}(w)}(\vartheta_j(x), \vartheta_j(y))]_K
\end{align*}     
\end{proof}

We now have prepared all ingredients in the proof of Proposition~\ref{prop;techinicalQIE}.

\begin{proof}[Proof of Proposition~\ref{prop;techinicalQIE}]
Since $\rho$, $\vartheta_i$ (with $i \in \{1,2,3,4\}$) are $G$-equivariant maps, it follows that $\Phi$ is a $G$-equivariant map.
Since the orbital map of any   isometric action is  Lipschitz (e.g. see \cite[Lemma~I.8.18]{BH99}), it suffices to give an upper bound on $d(x,y)$.

Let $\epsilon >0$ be the constant given by Lemma~\ref{lem:upperboundd}, so that 
\[
    |x-y|_X \le \epsilon \left (|\rho(x)- \rho(y)|_T + \bigl|x-y\bigr|_X^{\hor} +  V(x,y) \right ) 
\]
Appropriate upper bounds of the vertical distance $V(x, y)$  and the horizontal distance $\bigl|x-y\bigr|_X^{\hor}$ have been already treated in Proposition~\ref{vDistFormulaProp} and Lemma~\ref{lem:upperdh} respectively.
They are
\[
V(x, y) \preceq_K \sum_{j=1,2} \left(\sum_{ v \in \alpha \cap \mathcal V_j} [d_{\mathfrak{fl}(v)} (\vartheta_j(x), \vartheta_j(y))]_{K} \right)  +|\rho(x)-\rho(y)|_T
\] and
$$
 \bigl|x-y\bigr|_X^{\hor}
    \preceq_K  \sum_{i=0}^{2n} \bigl|x_i-y_i\bigr|_{\dot Y_{v_i}}^K+    \sum_{i=0}^{2n} \sum_{w\in \Lk(v_{i}) \setminus \alpha} [d_{\mathfrak{fl}(w)}(\vartheta_j(x), \vartheta_j(y))]_K
$$
where the index  $j$ depends on $i$: $j=1$  if $i$ is odd, otherwise $j=2$.
The above two inequalities yield:
\[
\bigl|x-y\bigr|_X^{\hor} + V(x, y) \preceq_{K} |\rho(x)- \rho(y)|_T + \sum_{i=0}^{2n}  |x_i- y_i|_{\dot Y_{v_i}}^K +  \sum_{i=0}^{2n} \sum_{w\in \Lk(v_{i})} [d_{\mathfrak{fl}(w)}(\vartheta_j(x), \vartheta_j(y))]_K
\]
By (\ref{eqn:distanceformula}), we have
\[
 \sum_{i=0}^{2n} \sum_{w\in \Lk(v_{i})} [d_{\mathfrak{fl}(w)}(\vartheta_j(x), \vartheta_j(y))]_K \preceq_{K} \bigl|\vartheta_{1}(x)-\vartheta_{1}(x)\bigr|_{\mathcal{C}_{K}(\mathbb{F}_{1})} + \bigl|\vartheta_{2}(x)-\vartheta_{2}(x)\bigr|_{\mathcal{C}_{K}(\mathbb{F}_{2})}
\]
It follows that
\[
\bigl|x-y\bigr|_X^{\hor} + V(x,y) \preceq_{K} |\rho(x)-\rho(y)|_T + \sum_{i=0}^{2n}  |x_i- y_i|_{\dot Y_{v_i}}^K + \sum_{i=1}^{2} \bigl|\vartheta_{i}(x)- \vartheta_{i}(x)\bigr|_{\mathcal{C}_{K}(\mathbb{F}_{i})}
\]
Plugging the thick distance formula (\ref{ThickdistEQ}) into the above inequality, we obtain
\begin{align*}
\bigl|x-y\bigr|_X^{\hor} + V(x,y) &\preceq_{K} |\rho(x)- \rho(y)|_T + \bigl|\vartheta_{3}(x)-\vartheta_{3}(y)\bigr|_{\dot{\mathcal X_1}}^K + \bigl|\vartheta_{4}(x)-\vartheta_{4}(y)\bigr|_{\dot{\mathcal X_2}}^K \\~\\ 
&+  \bigl|\vartheta_{1}(x)-\vartheta_{1}(x)\bigr|_{\mathcal{C}_{K}(\mathbb{F}_{1})}  + \bigl|\vartheta_{2}(x)- \vartheta_{2}(x)\bigr|_{\mathcal{C}_{K}(\mathbb{F}_{2})}      
\end{align*}
As  $|x- y|_X \le \epsilon  (  |\rho(x)- \rho(y)| +  \bigl|x-y\bigr|_X^{\hor}  +  V(x,y)  ) $, it is a consequence from the above inequality that 
the  map $\Phi=  \vartheta_1\times  \vartheta_2 \times \vartheta_3\times \vartheta_4\times\rho $ in (\ref{map}) is a $G$--equivariant quasi-isometric embedding from $X$ to $\mathcal{C}_{K}(\mathbb{F}_{1})\times \mathcal{C}_{K}(\mathbb{F}_{2})\times \dot{\mathcal X_1}\times \dot{\mathcal X_2}\times T$. The proof of Proposition is complete.
\end{proof}

\section{Proof of  Theorem~\ref{QTCKAThm}}\label{ProofQTCKASec}
\label{sec:proofCKA}
Let $G \curvearrowright X$ be a CKA action such that for every vertex group the central extension (\ref{centralExtEQ}) has omnipotent hyperbolic quotient group.
 Let $\dot G < G$ be the subgroup of the index at most $2$ preserving $\mathcal V_1$ and $\mathcal V_2$ given by Lemma \ref{lem:index2subgroup}.
Upon passing to further finite index subgroups in Lemma~\ref{FindexSubgrpsLem}, we obtain a finite index subgroup $G'$ of $\dot G$ such that the results in Section~\ref{FiberLineSystemSec} and Section~\ref{CKADistFormulaSec} hold for $G'$. We caution   the reader that at the beginning of Section~\ref{FiberLineSystemSec} we assume that each vertex group of $G$ is a direct product, this assumption may not hold for the original $G$, but holds in the finite index subgroup $G'$ of $G$.

As $G'$ is a subgroup of $\dot G$, it follows from 
 Proposition~\ref{OmnipotentProp} that there exists a $G'$--equivariant quasi-isometric embedding
 \[
 \eta \colon (\dot{\mathcal X_1}\times \dot{\mathcal X_2}\times T, d^K_{\dot{\mathcal X_1}}\times d^K_{\dot{\mathcal X_2}}\times d_T) \to T_1\times T_2\cdots \times T_n\times T
 \]
Applying Proposition~\ref{prop;techinicalQIE} to $G'$, we have a $G'$--equivariant quasi-isometric embedding
\[
\Phi \colon   G' o \to \mathcal C_K(\mathbb F_1)\times \mathcal C_K(\mathbb F_2)\times (\dot{\mathcal X_1}, d^K_{\dot{\mathcal X_1}}) \times (\dot{\mathcal X_2}, d^K_{\dot{\mathcal X_2}}) \times T
\]
It implies that $(id_{\mathcal{C}_{K}(\mathbb{F}_{1})} \times id_{\mathcal{C}_{K}(\mathbb{F}_{2})} \times \eta) \circ \Phi$ is a $G'$--equivariant quasi-isometric embedding from $G'\cdot o$ to the finite product of quasi-trees $\mathcal{C}_{K}(\mathbb{F}_1) \times \mathcal{C}_{K}(\mathbb{F}_1) \times  T_1\times T_2\cdots \times T_n\times T$. Thus $G'$ has property (QT), implying $G$ has property (QT).

\section{Applications: Property (QT) of 3-manifold groups}
\label{sec:application3mld}
In this section, we apply results obtained in previous sections to give a complete characterization of property (QT) of all finitely generated 3-manifold groups (Theorem~\ref{thm:QT3MFD}). Note that property (QT) is a commensurability invariant. Hence, we can always assume that all 3-manifolds are compact, orientable (by taking Scott's compact core and double cover).

Let $M$ be a compact, connected, orientable, irreducible 3-manifold with empty or tori boundary. $M$ is called \emph{geometric} if its interior admits geometric structures in the sense of Thurston, that are $S^3$, $\mathbb{E}^3$, $\mathbb{H}^3$, $S^{2} \times \mathbb{R}$, $\mathbb{H}^{2} \times \mathbb{R}$, $\widetilde{SL(2, \mathbb{R})}$, Nil and Sol. If $M$ is not geometric, then $M$ is called a {\it nongeometric $3$--manifold}. By geometric decomposition  of $3$--manifolds, there is a nonempty minimal union $\mathcal{T} \subset M$ of disjoint essential tori and Klein bottles, unique up to isotopy, such that each component of $M \backslash \mathcal{T}$ is either a Seifert fibered piece or a hyperbolic piece. $M$ is called {\it graph manifold} if all the pieces of $M \backslash \mathcal{T}$ are Seifert pieces, otherwise it is a {\it mixed manifold}.

We remark here that the geometric decomposition is slightly different from the torus decomposition, but they are closely related (if $M$ has no decomposing Klein bottle, then these two decompositions agree with each other). Such a difference can be got rid of by passing to some finite cover of $M$. Since we are only interested in   virtual properties of $3$--manifolds 
in this paper, we can always assume that  these two decompositions agree with each other (on some finite cover of $M$).

\subsection{Property (QT) of geometric 3-manifolds}

\begin{prop}
\label{prop:QTGM}
The fundamental group $\pi_1(M)$ of a geometric $3$--manifold $M$ has property (QT) if and only if $M$ does not support Sol and Nil geometry.
\end{prop}

\begin{proof}
We are going to prove the necessity. Assume that $\pi_1(M)$ has property (QT). By Lemma~\ref{QTUndistortedLem}, $\pi_1(M)$ does not contain any distorted element, while the fundamental group of a 3-manifold with Nil geometry or Sol geometry  contains  quadratically/exponentially distorted elements (for example, see \cite[Proposition~1.2]{NS20}). Hence, $M$ does not support Sol or Nil geometry.

Now, we are going to prove sufficiency. If $M$ supports geometry $\mathbb{E}^3, S^3 , S^{2} \times \mathbb{R}$, then $\pi_1(M)$ is virtually abelian so has property (QT). 
If the geometry of $M$ is $\mathbb{H}^{2} \times \mathbb{R}$ then $M$ is virtually covered by $\Sigma \times S^1$ for some hyperbolic surface $\Sigma$. Note that $\pi_1(\Sigma)$ is a residually finite hyperbolic group so it has property (QT) by \cite[Theorem~1.1]{BBF2}. Hence, $\pi_1(\Sigma) \times \mathbb{Z}$ has property (QT). Since $\pi_1(\Sigma) \times \mathbb{Z}$ is a finite index subgroup of $\pi_1(M)$, it follows that $\pi_1(M)$ has property (QT) by Lemma~\ref{lem:passfiniteindex}.
If $M$ supports geometry $\mathbb{H}^3$, $\pi_1(M)$ is virtually compact special by deep theorems of Agol and Wise (see \cite{Agol} \cite{Wise20}), thus $\pi_1(M)$ has property (QT). 

Finally, we need to show that if $M$ supports $\widetilde{SL(2, \R)}$ geometry then $\pi_1(M)$ has property (QT). To see this,  by passing to a finite cover if necessary, we could assume that $M$ is a nontrivial circle bundle over a closed surface $\Sigma$ with $\chi(\Sigma) <0$. Let $
1 \to K \to \pi_1(M) \to \pi_1(\Sigma) \to 1
$ be the short exact sequence associated with the circle bundle where $K$ is the normal cyclic subgroup of $\pi_1(M)$ generated by a fiber. Let  $\pi \colon \pi_1(M) \to \pi_1(\Sigma)$ be the surjective homomorphism in the above short exact sequence.
Note that the short exact sequence does not split since $M$ is supporting $\widetilde{SL(2,\mathbb{R})}$ geometry. According to the first paragraph in the proof of \cite[Corollary~4.3]{HRSS22}, there exists a generating set $\mathcal{S}$ of $G = \pi_1(M)$ so that $\mathcal{L}: = \Cay(G, \mathcal{S})$ is a quasi-line. Moreover, the diagonal action of $G$ on $\pi_1(\Sigma) \times \mathcal{L}$ is metrically proper and cobounded, and thus its orbital map is a quasi-isometry.
Since $\pi_1(\Sigma)$ is a residually finite hyperbolic group, it follows from \cite{BBF2} that $\pi_{1}(\Sigma)$ has property (QT). Hence there exists a finite product of quasi-trees $\prod_{i=1}^{n} T_i$ such that $\pi_1(\Sigma) \curvearrowright \prod_{i=1}^{n} T_i$ so that its orbital map is a quasi-isometric embedding. It is easy to see  that the orbital map of the diagonal action $G \curvearrowright \prod_{i=1}^{n} T_{i} \times \mathcal{L}$ of $G$ on the product $\prod_{i=1}^{n} T_{i} \times \mathcal{L}$  is a quasi-isometric embedding. Therefore $\pi_1(M)$ has property (QT).
\end{proof}

\subsection{Property (QT) of nongeometric 3-manifolds}
In this section, we are going to prove Theorem~\ref{thm:3-mfd}. Recall that a nongeometric 3-manifold is either a graph manifold or a mixed manifold.

\subsubsection{property (QT) of graph manifolds}
\label{sec:qtgraphmld}
Let $M$ be a graph manifold. Since property (QT) is preserved undertaking finite index subgroups (see Lemma~\ref{lem:passfiniteindex}), we only need to show that a finite cover of $M$ has property (QT).
By passing to a finite cover, we can assume that each Seifert fibered piece in the JSJ decomposition of $M$ is a trivial circle bundle over a hyperbolic surface of genus at least $2$, and the intersection numbers of fibers of adjacent Seifert pieces have absolute value $1$ (see\cite[Lemma~2.1]{KL98}). Also we can assume that the underlying graph of the graph manifold $M$ is bipartite since any non-bipartite graph manifold is double covered by a bipartite one. 

We note that $\pi_1(M)$ is an admissible group in the sense of Definition~\ref{defn:admissible}. However, it is not always true that $\pi_1(M)$ can acts geometrically on a CAT(0) space so property (QT) in this case does not follow immediately from Theorem \ref{QTCKAThm}. Indeed,  if $M$ is a graph manifold with nonempty boundary then it always admits a Riemannian metric of nonpositive curvature (see \cite{B95}). In particular, $\pi(M) \curvearrowright \tilde{M}$ is a CKA action, and thus property (QT) of $\pi_1(M)$ follows from Theorem~\ref{QTCKAThm}. However, many closed graph manifolds are shown to not support any Riemannian metric of nonpositive curvature (see \cite{B95}).

We remark here that the CAT(0) metric on the CKA space $X$ in Section~\ref{FiberLineSystemSec} and Section~\ref{CKADistFormulaSec} is not really essential in the proofs. Below we will make certain modifications on some steps to run the proof of Theorem~\ref{QTCKAThm} for closed graph manifolds.

\subsection*{Metrics on $M$:}
We now are going to describe a convenient metric on $M$ introduced by Kapovich--Leeb \cite{KL98}. For each Seifert component $M_v = F_{v} \times S^1$ of $M$, we choose a hyperbolic metric on the base surface $F_v$ so that all boundary components are totally geodesic of unit length, and then equip each Seifert component $M_v = F_v \times S^1$ with the product metric $d_v$ such that the fibers have length one. Metrics $d_v$ on $M_v$ induce the product metrics on $\tilde{M}_v$ which by abuse of notations is also denoted by $d_v$.

Let $M_v$ and $M_w$ be adjacent Seifert components in the closed graph manifold $M$, and let $T \subset M_v \cap M_w$ be a JSJ torus. Each metric space $(\tilde{T},d_{v})$ and $(\tilde{T}, d_{w})$ is a Euclidean plane. After applying a homotopy to the gluing map, we may assume that at each JSJ torus $T$, the gluing map $\phi$ from the boundary torus $\overleftarrow{T} \subset M_v$ to the boundary torus $\overrightarrow{T} \subset M_w$ is affine in the sense that the identity map $(\tilde{T},d_{v}) \to (\tilde{T}, d_{w})$ is affine.
We now have a product metric on each Seifert component $M_v = F_v \times S^1$. These metrics may not agree on the JSJ tori but the gluing maps are bilipschitz (since they are affine).
The product metrics on the Seifert components induce a length metric on the graph manifold $M$ denoted by $d$ (see \cite[Section~3.1]{BBI01}) for details). Moreover, there exists a positive constant $L$ such that on each Seifert component $M_v = F_v \times S^1$ we have
\[
   \frac{1}{L} \, d_v(x,y)
   \leq d(x,y)
   \leq L \, d_v(x,y)
\]
for all $x$ and $y$ in $M_v$. (See \cite[Lemma~1.8]{P05} for detailed proof of the last claim.)
Metric $d$ on $M$ induces metric on $\tilde{M}$, which is also denoted by $d$ (by abuse of notations). Then for all $x$ and $y$ in $\tilde{M}_v$ we have
\[
   \frac{1}{L} \, d_{v}(x,y)
   \leq d(x,y)
   \leq L \, d_{v}(x,y)
\]

\begin{rem}
Note that the space $(\tilde{M}, d)$ may not be a CAT(0) space but  $\pi_1(M)$ acts geometrically on $(\tilde{M}, d)$ via deck transformations.
\end{rem}

In Section~\ref{subsection:specialpath}, we define special paths on a CAT(0) space $X$. In this section, although $(\tilde{M}, d)$ is no longer a CAT(0) space, we are still able to define special paths in  $(\tilde{M}, d)$. The construction is similar to Section~\ref{subsection:specialpath} with slight changes.

\subsection*{Special paths on $\tilde M$:}
\label{subsec:specialpath}
Lift the JSJ decomposition of the graph manifold $M$ to the universal cover $\tilde M$, and let $T$ be the tree dual to this decomposition of $\tilde M$ (i.e., the Bass-Serre tree of $\pi_1(M)$).
For every pair of adjacent edges $e_1$, $e_2$ in $T$, let $v$ be the common vertex of $e_1$ and $e_2$. Let $\ell$ and $\ell'$ be two boundary lines of $\tilde{F}_v$ corresponding to the edges $e_1$ and $e_2$ respectively. Let $\gamma_{e_1 e_2}$ be the shortest geodesic joining $\ell$ and $\ell'$ in $(\tilde{M}_v, d_v )$. This geodesic determines an Euclidean strip $\mathcal{S}_{e_1 e_2} : = \gamma_{e_1 e_2} \times \R$ in $(\tilde{M}_v , d_v )$.
Let $x$ be a point in $(\tilde{M}_v, d_v)$ and $e$ be an edge with an endpoint $v$. The minimal geodesic from $x$ to the plane $F_e$ also define a strip $\mathcal{S}_{xe} : = \gamma_{xe} \times \R$ in $(\tilde{M}_v, d_v)$ where $\gamma_{xe} \subset \tilde{F}_v$ is the projection to $\tilde{F}_v$ of this minimal geodesic.

Now, let $x$ and $y$ be any two points in the universal cover $\tilde M$ of $M$ such that $x$ and $y$ belong to the interiors of pieces $\tilde{M}_v$ and $\tilde{M}_v'$ respectively.
If $v = v'$ then we define a {\it special path} in $X$ connecting $x$ and $y$ to be the geodesic $[x, y]$ in $(\tilde{M}, d)$. Otherwise, let $e_1 \cdots e_n$ be the geodesic edge path connecting $v$ and $v'$. For notational purpose, we write $e_0 : =x $ and $e_{n+1} : = y$.
Let $p_i \in F_{e_i}$ be the intersection point of the strips $\mathcal{S}_{e_{i-1}e_{i}}$ and $\mathcal{S}_{e_{i}e_{i+1}}$. The {\it special path} connecting $x$ and $y$ is the concatenation of the geodesics
\[
[x,p_1] \cdot [p_1, p_2] \cdots [p_n, y]
\]
We label $p_0 : = x$ and $p_{n+1} : = y$.

\begin{prop}
\label{prop:QTgraphmfd}
If $M$ is a graph manifold, then $\pi_1(M)$ has property (QT).
\end{prop}

\begin{proof}
If $M$ is a non-positively curved graph manifold (for example, when $M$ has nonempty boundary) then the fact $\pi_1(M)$ has property (QT) is followed from Theorem~\ref{QTCKAThm}. The only case that does not follow directly from Theorem~\ref{QTCKAThm} is when $M$ is a closed graph manifold (recall many closed graph manifolds are non-positively curved but many are not).  Since the metric $d$ on $\tilde{M}$ restricted to each piece $\tilde{M}_v$ is $L$--bilipschitz equivalent to $d_v$, so the inequalities in Section~\ref{CKADistFormulaSec} are slightly changed by a uniform multiplicative constant. For example, the statement $a \asymp_{K} b$ (or $a \preceq_K b$) in Section~\ref{CKADistFormulaSec} will be changed to $a \asymp_{K'} b$ (or $a \preceq_K' b$) for some constant $K'$ depending on $K$.  Thus, the proof, in this case,  is performed along lines with the proof of Theorem~\ref{QTCKAThm}.
\end{proof}

\subsubsection{property (QT) of mixed 3-manifolds}
Recall that a non-geometric 3-manifold with empty or tori boundary is either a graph manifold or a mixed 3-manifold. The case of graph manifold has been addressed in Section~\ref{sec:qtgraphmld}. In this section, we address the mixed 3-manifold case. 

\begin{prop}
\label{prop:3-mfdQT}
The fundamental group of a mixed 3-manifold has property (QT).
\end{prop}

The fundamental group of a mixed 3-manifold has a natural relatively hyperbolic structure as follows: Let $M_1,\cdots, M_{k}$ be the maximal graph manifold pieces, isolated Seifert fibered components of the
JSJ-decomposition of $M$, and   $S_1,\cdots,S_{l}$ be the tori in $M$ not contained in any $M_i$.  The fundamental group $G = \pi_1(M)$ is hyperbolic relative to the set of parabolic subgroups
\[
\mathcal P = \{\pi_1(M_{p}): 1\le p\le k\} \cup \{\pi_1(S_{q}): 1\le q\le l\}
\] (see \cite{Bigdely-Wise13}, \cite{Dah}).

The following lemma provides many separable subgroups in $\pi_1(M)$, generalizing \cite[Lemma~3.3]{Sun21}. The proof uses a recent result of the second author and Sun in \cite{NS20} where the authors show that separability and distortion of subgroups in 3-manifold groups are closely related.

\begin{lem}
\label{lem:separable}
Let $M$ be a compact, orientable, irreducible, 3-manifold with empty or tori boundary, with nontrivial torus decomposition and does not support the Sol geometry. If $H$ is a finitely generated, undistorted subgroup of $\pi_1(M)$, then $H$ is separable in $\pi_1(M)$.
\end{lem}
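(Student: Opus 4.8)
The plan is to invoke the known subgroup separability theory for fundamental groups of $3$--manifolds, due to work built on the resolution of the Virtually Special Conjecture. Specifically, Lemma~\ref{lem:separable} is an instance of the fact that finitely generated undistorted (equivalently, relatively quasiconvex) subgroups of the fundamental group of a mixed or graph $3$--manifold are separable. First I would pass, as always, to a finite cover to reduce to the case where $M$ is a graph manifold or a mixed manifold with nonempty geometric decomposition, since $\pi_1(M)$ does not support Sol and has nontrivial torus decomposition. In the mixed case, $\pi_1(M)$ is hyperbolic relative to the collection $\mathcal{P}$ of fundamental groups of maximal graph-manifold and isolated Seifert components (and isolated JSJ tori), as recalled in the introduction; in the graph-manifold case $\pi_1(M)$ is an admissible group acting on a CAT(0) space (after passing to a cover with boundary, or by the closed graph-manifold discussion).

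The key steps, in order: (1) reduce to the relatively hyperbolic setting and recall that a finitely generated undistorted subgroup $H$ of a relatively hyperbolic group is relatively quasiconvex (this is standard — undistorted subgroups of relatively hyperbolic groups with all peripherals undistorted are relatively quasiconvex, using that the peripherals here are $\pi_1$ of graph manifolds, which are themselves quasi-isometrically embedded). (2) Apply the separability result of Sun, or of Przytycki--Wise, which states that relatively quasiconvex subgroups of $\pi_1$ of a mixed $3$--manifold are separable; this rests on the fact that the peripheral subgroups — $\pi_1$ of graph manifolds — have the property that every relatively quasiconvex subgroup is separable, together with the combination theorem for separability in relatively hyperbolic groups. (3) For the graph-manifold peripheral pieces themselves, invoke the theorem (Przytycki--Wise, building on work for cube complexes, or the earlier work on graph manifolds) that $\pi_1$ of a graph manifold is virtually special or at least has all its finitely generated undistorted subgroups separable. (4) Assemble: an undistorted $H \leq \pi_1(M)$ is relatively quasiconvex, hence separable by the cited combination result.

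The main obstacle I expect is the bookkeeping needed to ensure that an \emph{undistorted} (as opposed to \emph{relatively quasiconvex}) subgroup is the right hypothesis, and to handle the closed graph-manifold case where there is no CAT(0) or relatively hyperbolic structure directly available — there one should cite the specific separability theorem for graph manifold groups (these groups are not relatively hyperbolic in a nontrivial way, but their finitely generated subgroups are still separable by work of, e.g., Przytycki--Wise or Liu--Wise on the canonical completion). A secondary subtlety is that the statement is used precisely to feed into Theorem~\ref{thm:sepaQT} — so what is really needed downstream is separability of \emph{finite-index} subgroups of each $P \in \mathcal{P}$ inside $\pi_1(M)$, i.e., the full profinite topology; but that is addressed separately, and here it suffices to establish separability of $H$ itself. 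I would therefore keep the proof short: reduce to the relatively hyperbolic case, note undistorted $\Rightarrow$ relatively quasiconvex, and cite the separability of relatively quasiconvex subgroups in $3$--manifold groups, with the graph-manifold peripherals handled by their own (virtually special, hence LERF for quasiconvex subgroups) structure.
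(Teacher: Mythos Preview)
Your approach is genuinely different from the paper's, and it has real gaps that would need to be filled before it could stand as a proof.

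The central problem is the step ``undistorted $\Rightarrow$ relatively quasiconvex.'' This implication is \emph{not} standard and is false in general for relatively hyperbolic groups; relative quasiconvexity implies undistortion (under mild hypotheses on peripherals), but the converse fails. You flag this as ``bookkeeping,'' but it is the heart of the matter --- without it the cited separability theorems for relatively quasiconvex subgroups do not apply. A second gap is the graph-manifold case: the lemma covers graph manifolds (nontrivial torus decomposition, not Sol), and there the group is not nontrivially relatively hyperbolic. You gesture at virtually special technology, but closed graph manifolds are not always virtually special (Liu's characterization), and graph-manifold groups are not LERF in general, so ``Przytycki--Wise or Liu--Wise'' does not furnish a blanket separability statement for undistorted subgroups.

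The paper's proof bypasses relative quasiconvexity entirely and works directly with Sun's separability criterion. One considers the cover $M_H\to M$ and its \emph{almost fiber surface} $\Phi(H)$ and \emph{modified almost fiber surface} $\hat\Phi(H)$ (from \cite{Sun20} and \cite{NS20}). By \cite[Theorem~1.4]{NS20}, if any component of $\hat\Phi(H)$ contains two or more pieces then $H$ is at least quadratically distorted; since $H$ is undistorted, every component of $\hat\Phi(H)$ has a single piece, which forces the dual graph $\Gamma_{\Phi(H)}$ to be a forest. Sun's criterion \cite[Theorem~1.3]{Sun20} then says that $H$ is separable whenever $\Gamma_{\Phi(H)}$ contains no simple cycle. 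This argument treats graph manifolds and mixed manifolds uniformly and never invokes relative quasiconvexity or virtual specialness.
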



\begin{proof}
Let $M_H$ be the covering space of $M$ corresponding to
$H \le \pi_1(M)$.
Generalizing a notion called ``almost fiber
part'' in \cite{Liu17}, an embedded (possibly
disconnected) subsurface $\Phi(H)$  in   $M_H$ called ``almost
fiber surface''   is introduced in  \cite{Sun20}.  Sun shows in  \cite[Theorem 1.3]{Sun20} that all information about the separability of $H$ can
be obtained by examining the almost fibered surface. 

In \cite{NS20}, the authors introduce a notion called ``modified almost fibered surface'' (denoted $\hat{\Phi}(H)$) that modify slightly the original definition of almost fibered surface in \cite{Sun20} and show that information about the distortion of $H$ in $G$ can be also obtained by examining the ``modified almost fibered surface''.  We refer the reader to \cite{Sun20} for the definition of the almost fiber surface and to \cite{NS20} for the definition of the modified almost fiber surface. The precise definitions are not needed here, so we only state here some facts from  \cite{NS20} that will be used later in the proof.

The  torus decomposition of $M$ induces the torus decomposition of
$\Phi(H)$. 
Let $\Phi(H)$ and $\hat{\Phi}(H)$ be the almost fiber surface and modified almost fiber surface of $H$ respectively.

\begin{enumerate}

    \item Both the almost fiber surface $\Phi(H)$ and the  modified almost fiber surface $\hat{\Phi}(H)$ are (possibly disconnected) subsurfaces  of $M_H$. 
    \item 
    The almost
fiber surface $\Phi(H)$ has some piece that is
homeomorphic to the annulus and parallel to the boundary of $\Phi(H)$. We
delete these annulus pieces from $\Phi(H)$ to get the modified almost fiber
surface, and we denote it by $\hat{\Phi}(H)$.

\end{enumerate}

The surface
      $\Phi(H)$ (resp.     $\hat{\Phi}(H)$)  has a natural graph of spaces structure with  the dual graph denoted by $\Gamma_{\Phi(H)}$ (resp. $\Gamma_{\hat{\Phi}(H)}$).
By \cite[Theorem~1.4]{NS20}, every component $S$ of the modified almost fiber surface $\hat{\Phi}(H)$ must contain only one piece (otherwise, the distortion of $H$ in $\pi_1(M)$ is at least quadratic, this contradicts the fact that $H$ is undistorted in $\pi_1(M)$). This fact combined with (2) implies that the graph $\Gamma_{\Phi(H)}$ is a union of trees. By \cite[Theorem~1.3]{Sun20} (or see also  \cite[Theorem~3.2]{Sun21} for a statement) tells us that whenever $\Gamma_{\Phi(H)}$ does not contain a simple cycle then $H$ is separable. As shown above, we are in this case, hence we conclude that the subgroup $H$ is separable in $\pi_1(M)$.
\end{proof}

We now give the proof of Proposition~\ref{prop:3-mfdQT}.

\begin{proof}[Proof of Proposition~\ref{prop:3-mfdQT}]
Let $M_1, \cdots, M_k$ be the collection of maximal graph manifold components and Seifert fibered pieces in the geometric decomposition of $M$. Let $S_1, \dots, S_{\ell}$ be the tori in the boundary of $M$ that bound a hyperbolic piece, and let $T_1, \dots, T_m$ be the tori in the JSJ decomposition of $M$ that separate two hyperbolic components of the JSJ decomposition. Then $\pi_1(M)$ is hyperbolic relative to 
\[
\mathbb{P} = \{\pi_1(M_p)\}_{p=1}^k \cup \{\pi_1(S_q)\}_{q=1}^\ell \cup \{\pi_1(T_r)\}_{r=1}^m
\]
(see \cite{Bigdely-Wise13}, \cite{Dah}).

We are going to show that $G=\pi_1(M)$   satisfies all conditions in Theorem~\ref{QTRHGThm}.

{\it Claim~1:} $\pi_1(M)$ induces the full profinite topology on each $P \in \mathcal{P}$. 
Indeed, it is well-known that the fundamental groups of all compact 3-manifolds are residually finite, thus  $\pi_1(M)$ is residually finite.
Since each peripheral subgroup $P$ is undistorted in $\pi_1(M)$, it follows from Lemma~\ref{lem:separable} that $P$ is separable in $\pi_1(M)$. 
Again, by Lemma~\ref{lem:separable}, each finite index subgroup of $P$ is also separable in $\pi_1(M)$. By \cite[Lemma~2.8]{R18}, $\pi_1(M)$ induces the full profinite topology on $P$.

{\it Claim~2:} For each peripheral subgroup $P \in \mathbb{P}$, there exists a finite index subgroup $P'$ of $P$  acting isometrically on a finite number of quasi-trees so that the diagonal action of $P'$ on the finite product of these  quasi-trees induces quasi-isometric embeddings on orbital maps.
Indeed, if $P = \pi_1(T_r)$ or $P = \pi_1(S_q)$ for some $r, q$ then $\pi_1(P) = \Z^2$, we denote $P ' : = P$. If $P = \pi_1(M_j)$ for some Seifert component $M_j = F_j \times S^1$ then $P = \pi_1(F_j) \times \Z$. In this case, as $F_j$ is a hyperbolic surface with nonempty boundary, we have $\pi_1(F_j)$ is a free group, hence we choose $P' = P$ as $\pi_1(F_j)$ is a quasi-tree.  The last case we must consider is that $P = \pi_1(M_j)$ where $M_j$ is a maximal graph manifold component. Passing to an appropriate finite cover $M'_j \to M_j$ we could assume that $\pi_1(M'_j)$ acts on a finite number of quasi-trees (but they are not quasi-lines) $T_1, T_2, \cdots, T_n$'s so that the orbital map induced from the diagonal action $\pi_1(M_j) \curvearrowright \prod_{1=1}^{n} T_i$ is a quasi-isometric embedding (see Proposition~\ref{prop:QTgraphmfd}). Claim~2 is confirmed. We then repeat the proof of Theorem~\ref{thm:sepaQT} (the second and third paragraph) to show that $P$ satisfies the hypothesis of Theorem~\ref{QTRHGThm}.


In summary, we have verified the hypotheses in Theorem~\ref{QTRHGThm} for $G=\pi_1(M)$, so mixed 3-manifold groups  have property (QT).
\end{proof}

We now give the proof of Theorem~\ref{thm:3-mfd}.
\begin{proof}[Proof of Theorem~\ref{thm:3-mfd}]
    Let $M$ be a compact orientable irreducible 3-manifold with
empty or tori boundary, with nontrivial torus decomposition, and that does not support the Sol geometry. Such a 3-manifold $M$ is either a graph manifold or a mixed manifold. The graph manifold case and mixed manifold case have been addressed in Proposition~\ref{prop:QTgraphmfd} and Proposition~\ref{prop:3-mfdQT} respectively, and hence the theorem is proved.
\end{proof}

\subsection{Property (QT) of finitely generated 3-manifolds}

\begin{prop}
\label{prop:highergenus}
Let $M$ be a compact, orientable, irreducible, $\partial$--irreducible 3-manifold such that it has a boundary component of genus at least $2$. Then $\pi_1(M)$ has property (QT).
\end{prop}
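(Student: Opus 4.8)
The plan is to reduce Proposition~\ref{prop:highergenus} to the already-established cases by exploiting the topology of a boundary component of genus $\geq 2$.  First I would recall the standard fact that if $M$ is a compact, orientable, irreducible, $\partial$-irreducible $3$-manifold with a boundary component $\Sigma$ of genus at least $2$, then $M$ cannot be a closed manifold, cannot support any of the eight geometries with virtually solvable fundamental group (Sol, Nil, $\mathbb E^3$, etc.), and in fact the presence of a high-genus boundary forces $\pi_1(M)$ to be ``large'' in a controlled sense.  The key structural input is that such an $M$ has a nontrivial JSJ/torus decomposition \emph{or} is itself hyperbolic (with nonempty boundary) \emph{or} is a single Seifert piece $\Sigma'\times S^1$-type block --- but a Seifert fibered space with a genus $\geq 2$ boundary component has base orbifold with negative Euler characteristic and nonempty boundary, hence $\pi_1$ is virtually $F_k\times\mathbb Z$ for a free group $F_k$, which plainly has property (QT) (it is undistorted in a RAAG, or directly: $F_k$ is residually finite hyperbolic, apply \cite{BBF2}, and take a product with $\mathbb Z$ acting on a line).

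Then I would split into the remaining cases exactly as in the proof of Theorem~\ref{thm:3-mfdQT}.  If $M$ is hyperbolic with nonempty boundary, then $\pi_1(M)$ is hyperbolic relative to its boundary tori (abelian subgroups), and Theorem~\ref{QTRHGThm} (together with residual finiteness of $\pi_1(M)$, from \cite{Agol}\cite{Wise20}) gives property (QT) --- this is already contained in Proposition~\ref{prop:QTGM}'s proof.  If $M$ has a nontrivial torus decomposition, then after passing to a finite cover (allowed since property (QT) is a commensurability invariant) we may assume $M$ is either a graph manifold with nonempty boundary or a mixed manifold; in the first case $M$ admits a nonpositively curved metric by \cite{B95}, so $\pi_1(M)\curvearrowright\widetilde M$ is a CKA action and Theorem~\ref{QTCKAThm} applies (using that $3$-manifold groups have virtually special, hence omnipotent, hyperbolic pieces via \cite{Wise20}), and in the mixed case the argument of Theorem~\ref{thm:3-mfdQT} applies verbatim: $\pi_1(M)$ is hyperbolic relative to the maximal graph manifold pieces, isolated Seifert pieces, and isolated JSJ tori (\cite{Bigdely-Wise13}, \cite{Dah}), each such peripheral subgroup is undistorted hence separable by Lemma~\ref{lem:separable}, $\pi_1(M)$ induces the full profinite topology on each by \cite[Lemma~2.8]{R18}, and Theorem~\ref{thm:sepaQT} (equivalently the finite-index trick feeding into Theorem~\ref{QTRHGThm}) concludes.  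The one case not literally covered above is when $M$ itself, \emph{before} decomposition, is a single Seifert block; there I use the $F_k\times\mathbb Z$ description just mentioned.

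I expect the main obstacle to be purely bookkeeping: verifying that the hypothesis ``has a boundary component of genus $\geq 2$'' genuinely rules out all the bad geometries and, more importantly, that it is compatible with the commensurability reductions --- one must check that the relevant finite covers still land in the scope of Theorems~\ref{QTCKAThm}, \ref{QTRHGThm}, and \ref{thm:sepaQT}, and in particular that a decomposing Klein bottle (geometric vs.\ torus decomposition, as flagged in the remark before Proposition~\ref{prop:QTGM}) is handled by passing to a cover where the two decompositions coincide.  A second minor point is that $\partial$-irreducibility is needed to guarantee that the genus $\geq 2$ boundary survives in each piece of the decomposition (so that no piece is a solid torus or a handlebody collapsing the argument), but this is exactly what $\partial$-irreducible means.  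Once these reductions are in place, no new estimates are required --- Proposition~\ref{prop:highergenus} is a corollary of Theorem~\ref{thm:3-mfdQT} and Proposition~\ref{prop:QTGM} together with the Seifert-with-boundary observation.

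\begin{proof}[Sketch of proof]
Since property (QT) is a commensurability invariant, we may pass to finite covers freely.  Because $M$ has a boundary component of genus at least $2$, $M$ is not closed and $\pi_1(M)$ is not virtually solvable; in particular $M$ supports none of the Sol, Nil, $\mathbb E^3$, $S^3$, or $S^2\times\mathbb R$ geometries.  If $M$ is Seifert fibered, then its base orbifold has negative Euler characteristic with nonempty boundary, so a finite cover of $\pi_1(M)$ is isomorphic to $F_k\times\mathbb Z$ with $F_k$ free; since $F_k$ is residually finite hyperbolic it has property (QT) by \cite{BBF2}, hence so does $F_k\times\mathbb Z$, and therefore $\pi_1(M)$ does.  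If $M$ is hyperbolic with nonempty boundary, then $\pi_1(M)$ is residually finite (\cite{Agol}, \cite{Wise20}) and hyperbolic relative to its boundary tori, so property (QT) follows from Theorem~\ref{QTRHGThm} as in the proof of Proposition~\ref{prop:QTGM}.  Otherwise $M$ has a nontrivial torus decomposition; passing to a finite cover on which the geometric and torus decompositions agree, the manifold $M$ satisfies the hypotheses of Theorem~\ref{thm:3-mfd}, and hence $\pi_1(M)$ has property (QT) by Theorem~\ref{thm:3-mfdQT}.  In every case $\pi_1(M)$ has property (QT).
\end{proof}
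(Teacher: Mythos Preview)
Your approach has a genuine gap in the case where $M$ has a nontrivial torus decomposition. Theorem~\ref{thm:3-mfd} and Theorem~\ref{thm:3-mfdQT} are stated for manifolds \emph{with empty or tori boundary}, and this hypothesis is used throughout: the relative hyperbolicity of \cite{Bigdely-Wise13}, \cite{Dah}, Lemma~\ref{lem:separable}, and the CKA framework of Theorem~\ref{QTCKAThm} are all set up under that assumption. Since your $M$ has a boundary component of genus at least $2$, it does not satisfy these hypotheses, and passing to a finite cover does not help --- the cover still has high-genus boundary. (Incidentally, your Seifert case is vacuous: an orientable Seifert fibered $3$-manifold has only torus boundary components, so it never carries a genus $\ge 2$ boundary.)

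The paper handles this not by decomposing $M$ directly but by an embedding trick. In both the trivial and nontrivial torus decomposition cases one glues hyperbolic $3$-manifolds with totally geodesic boundary along the genus $\ge 2$ boundary components of $M$, producing a manifold $N$ with empty or tori boundary (cf.\ \cite[Section~6.3]{Sun20}). In the trivial case $N$ is finite-volume hyperbolic; in the nontrivial case $N$ is a mixed $3$-manifold whose torus decomposition restricts to that of $M$. One then checks that $\pi_1(M)$ is undistorted in $\pi_1(N)$ --- via the Covering and Tameness Theorems in the first case, and via \cite[Theorem~1.3]{NS20} in the second --- so $\pi_1(M)$ inherits property (QT) from $\pi_1(N)$, which has it by Proposition~\ref{prop:QTGM} or Theorem~\ref{thm:3-mfdQT}. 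Your direct argument in the trivial-decomposition case (recognizing $\pi_1(M)$ as a geometrically finite Kleinian group, hence residually finite and relatively hyperbolic with abelian peripherals, and invoking Theorem~\ref{QTRHGThm}) is a legitimate alternative there; but for the nontrivial-decomposition case no such shortcut is available, and the capping-off construction is what makes the proof go through.
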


\begin{proof}
We consider the following two cases:

{\it Case~1:}
$M$ has trivial torus decomposition.
In this case, $M$ supports a geometrically finite hyperbolic structure with infinite volume. We paste hyperbolic 3-manifolds with totally geodesic boundaries to $M$  to get a finite volume hyperbolic 3-manifold $N$.  By the Covering Theorem (see \cite{Can96}) and the Subgroup Tameness Theorem (see \cite{Agol04}, \cite{CG06}), a finitely generated subgroup of the finite volume hyperbolic 3-manifold $N$ is either a virtual fiber surface subgroup or undistorted.
By the construction of $N$, the subgroup $\pi_1(M) \le \pi_1(N)$ could not be a virtual fiber surface subgroup, and thus $\pi_1(M)$ must be undistorted in $\pi_1(N)$. Since $\pi_1(N)$ has property (QT), it follows that $\pi_1(M)$ has property (QT) (see Lemma~\ref{lem:passfiniteindex}).


{\it Case~2:}
We now assume that $M$ has nontrivial torus decomposition. 
By  \cite[Section~6.3]{Sun20}, we paste hyperbolic 3-manifolds with totally geodesic boundaries to $M$ to get a 3-manifold $N$ with empty or tori boundary.  The new manifold $N$ satisfies the following properties.
\begin{enumerate}
    \item $M$ is a submanifold of $N$ with incompressible tori boundary.
    \item The torus decomposition of $M$ also gives the torus decomposition of $N$.
    \item Each piece of $M$ with a boundary component of genus at least $2$ is contained in a hyperbolic piece of $N$.
\end{enumerate}
In particular, it follows from (2) and (3) that $N$ is a mixed 3-manifold. {{The subgroup $\pi_1(M)$ sits nicely in $\pi_1(N)$}}. By the proof of Case~1.2 in the proof of \cite[Theorem~1.3]{NS20}), we have that $\pi_1(M)$ is undistorted in $\pi_1(N)$ ({even more than that, $\pi_1(M)$ is strongly quasiconvex in $\pi_1(N)$ (see \cite{NTY19})}.   
Note that $\pi_1(N)$ has property (QT) by Proposition~\ref{prop:3-mfdQT}. Since $\pi_1(M)$ is undistorted in $\pi_1(N)$ and $\pi_1(N)$ has property (QT), it follows that $\pi_1(M)$ has property (QT).
\end{proof}

We now give the proof of   Theorem \ref{thm:QT3MFD} which gives a complete characterization of property (QT) for  finitely generated 3-manifolds groups.

\begin{proof}[Proof of Theorem \ref{thm:QT3MFD}]
Since $M$ is a compact, orientable $3$--manifold, it decomposes into irreducible, $\partial$--irreducible pieces $M_1, \dots, M_k$ by the sphere-disc decomposition. In particular, $\pi_1(M)$ is the free product $\pi_1(M_1)* \pi_1(M_2)* \cdots *\pi_1(M_k) *F_r$ for some free group $F_r$.  We remark here that $\pi_1(M)$ is hyperbolic relative to the collection $\PP = \{P_1, \cdots, P_k, F_r\}$ where $P_i := \pi_1(M_i)$.

We are going to prove the necessity. Assume that $\pi_1(M)$ has property (QT). Since $\pi_1(M_i)$ is undistorted in $\pi_1(M)$, it follows that $\pi_1(M_i)$ has property (QT) (see Lemma~\ref{lem:passfiniteindex}). By Proposition~\ref{prop:QTGM}, $M_i$ does not support Sol and Nil geometry.

Now, we are going to prove sufficiency.
Assume that there is no piece $M_i$ that supports either Sol or Nil geometry. We would like to show that $\pi_1(M)$ has property (QT). 
In this case, we observe that each peripheral subgroup $P \in \PP$ has property (QT). Indeed, a free group $P = F_r$  of course has property (QT), so let us now assume that $P = \pi_1(M_i)$ for some $1 \le i \le k$. If $M_i$ has a   boundary component of genus at least $2$ then property (QT) of $\pi_1(M_i)$   follows from Proposition~\ref{prop:highergenus}. Otherwise, $M_i$ has empty or tori boundary. Then the property (QT) of $\pi_1(M_i)$  follows  from  Proposition~\ref{prop:QTGM} for geometric manifolds, Proposition~\ref{prop:QTgraphmfd} for graph manifolds, and Proposition~\ref{prop:3-mfdQT} for mixed graph manifolds.  

{{We are going to show that $G=\pi_1(M)$   satisfies all conditions in Theorem~\ref{QTRHGThm}.}} The proof is similar to the proof of Proposition~\ref{prop:3-mfdQT} with minor changes.

{\it Claim~1:} $\pi_1(M)$ induces the full profinite topology on each $P_i \in \mathcal{P}$. 
It is well-known that the fundamental groups of all compact 3-manifolds are residually finite, thus  $\pi_1(M)$ is residually finite and its finite index subgroups are residually finite as well.
Any finite index subgroup $H$ of $P_i=\pi_1(M_i)$ is separable in the free product $G = P_1* P_2* \cdots *P_k *F_r$ by  \cite[Theorem~1.1]{B71}. Hence it follows from \cite[Lemma~2.8 ]{R18} that $G$ induces the full profinite topology on $P_i$.

{\it Claim~2:} For each peripheral subgroup $P \in \mathbb{P}$, there exists a finite index subgroup $P'$ of $P$  acting isometrically on a finite number of quasi-trees, so that the diagonal action of $P'$ on the finite product of these  quasi-trees induces quasi-isometric embeddings on orbital maps.
Indeed, the claim obviously holds for $P = F_{r}$ or $P =\Z^2$. The claim also holds for $P = \pi_1(M_i)$ where $M_i$ is a geometric 3-manifold. The case of graph manifolds is proved in the Claim~2 of the proof of Proposition~\ref{prop:3-mfdQT}. The only case left is when $M_i$ is a mixed 3-manifold or $M_i$ has a boundary component with genus at least $2$. It has been shown in Proposition~\ref{prop:highergenus} that if $M_i$ has a boundary component with genus at least $2$ then it is an undistorted subgroup in a mixed 3-manifold. Therefore it suffices  to consider only the mixed 3-manifold case. Recall that in the proof of Proposition~\ref{prop:3-mfdQT}, we show that there exists a finite index subgroup of $\pi_1(M_i)$ such that it is a relatively hyperbolic group, satisfying conditions in Theorem~\ref{QTRHGThm}, and thus Claim~2 is confirmed.

With Claim~1 and Claim~2, we use the same argument as in the proof of Theorem~\ref{thm:sepaQT} (see the second and third paragraph) to find a finite index normal subgroup $ G' $ of $G$ such that $ G'$ is hyperbolic relative to a  collection of subgroups satisfying the hypotheses in Theorem~\ref{QTRHGThm}, and thus $G'$ has property (QT). Therefore, $\pi_1(M)$ has property (QT) since $ G'$ is a finite index subgroup of $\pi_1(M)$ and $G'$ does have property (QT).
\end{proof}

\bibliographystyle{alpha}
\bibliography{CKadmissible}
\end{document}